\documentclass[11pt]{amsart}

\usepackage{amsmath}
\usepackage{amsfonts}
\usepackage{amsthm}
\usepackage{amssymb}
\usepackage{hyperref}
\usepackage{graphicx}
\usepackage{tikz-cd}
\usepackage{multicol}
\usepackage{enumitem}
\usepackage{indentfirst}
\usepackage[none]{hyphenat}
\usepackage{xcolor}
\usepackage{mathrsfs}
\usepackage{cleveref}
\usepackage{cancel}

\usepackage[top=4cm, bottom=4cm, left=3cm, right=3cm]{geometry}


\sloppy
\binoppenalty=\maxdimen
\relpenalty=\maxdimen


\usepackage{lmodern}


\newcommand{\RR}{\mathbb{R}}

\newcommand{\sm}{\setminus}

\newcommand{\Ter}[1]{\mathfrak{ter} (#1)}
\newcommand{\lset}[1]{\mathfrak l (#1)}
\newcommand{\rset}[1]{\mathfrak r (#1)}
\newcommand{\tset}[1]{\mathfrak t (#1)}
\newcommand{\bset}[1]{\mathfrak b (#1)}
\newcommand{\boxset}[1]{\mathfrak{box} (#1)}
\newcommand{\wset}[1]{\mathfrak w (#1)}
\newcommand{\hset}[1]{\mathfrak h (#1)}

\DeclareMathOperator{\nextF}{\mathfrak{S_F}}

\DeclareMathOperator{\desc}{\prec}

\newcommand{\adj}{\curvearrowright}

\DeclareMathOperator{\Rl}{R}
\DeclareMathOperator{\notRl}{\cancel{\Rl}}

\newcommand{\im}[1]{im(#1)}

\newtheorem{theorem}{Theorem}
\newtheorem{lemma}[theorem]{Lemma}
\newtheorem*{mainthm*}{Main Theorem}
\newtheorem{corollary}[theorem]{Corollary}
\newtheorem{remark}[theorem]{Remark}

\newtheorem{definition}[theorem]{Definition}
\newtheorem{statement}{Statement}

\newtheorem{property}[theorem]{Property}

\graphicspath{ {figures/} }

\title{Burling graphs as intersection graphs}
\author{Pegah Pournajafi}
\thanks{ENS Lyon, LIP, University Lyon 1, UCBL, CNRS, Lyon, France}


\begin{document}

\maketitle

\begin{abstract}

For a subset $ S $ of $ \RR^d$, $ S$-graphs are the intersection graphs of specific transformations of $ S $. The class of Burling graphs is a class of triangle-free graphs with arbitrarily large chromatic number that has attracted much attention in the last years. In 2012, Pawlik, Kozik, Krawczyk, Laso\'n, Micek, Trotter, and Walczak showed that for every compact and path-connected set $ S \subseteq \RR^2$ that is different from an axis-parallel rectangle, the class of $ S $-graphs contains all Burling graphs. There is, however, a gap between the two classes. In the recent years, there have been improvements in understanding the subclasses of $ S $-graphs that are closer or equal to Burling graphs. In this article, we close this gap for every set $ S $ with the mentioned properties: we introduce the class of constrained $ S $-graphs, a subclass of $ S$-graphs, and prove that it is equal to the class of Burling graphs. We also introduce the class of constrained graphs, a subclass of intersection graphs of subsets of $ \RR^2$, and prove that it is equal to the class of Burling graphs. 

\end{abstract}

\section{Introduction} \label{sec:intro}

\subsection*{Intersection graphs of geometric objects}

Let $ \mathcal F $ be a family of sets. The \emph{intersection graph} of $ \mathcal F $ is the graph $ G $ where $V(G) = \mathcal F $ and $ E(G) = \{ST \mid S \neq T, S \cap T \neq \varnothing \}$. A \emph{geometrical object}, in this setting, is a subset of a Euclidean space~$\RR^d$.

In this article, we deal only with the following type of \emph{transformations} of $ \RR^d$: transformations $ T: \RR^d \rightarrow \RR^d $ of the form  $ T = (T_1, \dots, T_d) $ where each $T_i $ is an affine function from $ \RR $ to itself. In particular, in case $d=2 $, we have 
$
T(x,y) = (ax+c, by+d), 
$
for some $ a, b \in \RR^* = \RR \sm \{0\} $ and $ c,d \in \RR$.
From now on, every time that we talk about a transformation, we mean a transformation of the form above. 

For a set $ S $, we call every set of the form $ T(S) $, where $ T $ is a transformation, a \emph{transformed copy} of $ S $. So a transformed copy of $ S $ is a set obtained from a translation of $ S $ and independent scalings parallel to the axis. 

We say that a graph $ G $ is an \emph{$ S $-graphs} if it is the intersection graph of a family $ \mathcal F $ of geometric objects that are transformed copies of $ S $.

\subsection*{Chromatic number of intersection graphs}

The \emph{chromatic number} of a graph $ G $, denoted by $ \chi(G)$, is the smallest integer $ k $ such that the vertex-set of $ G $ can be partitioned into $ k $ stable sets. The \emph{clique number} of $ G $, denoted by $ \omega(G) $ is the maximum number of vertices in $ G $ that are two by two adjacent. It is easy to observe that $\chi(G) \geq \omega(G)$.

A class of graphs is said to be \emph{hereditary} if it is closed under taking induced subgraphs. A hereditary class $\mathcal C $ of graphs is said to be $\chi$-bounded if there exists a function $ f: \mathbb N \rightarrow \mathbb N $ such that for all $ G \in \mathcal C $, we have $ \chi(G) \leq f(\omega(G))$. In particular, if the triangle-free graphs (that is, graphs with clique number at most 2) in $ \mathcal C $ have arbitrarily large chromatic number, then $ \mathcal C $ is not $chi$-bounded. In this article, we are interested in the special case where these classes of graphs under study are intersection graphs of geometric objects, and in particular, $ S $-graphs.

Let us see some examples. If $ S $ is an interval in $\RR$, then the class of $ S $-graphs is known as \emph{interval graphs}. It is well-known that interval graphs are perfect graphs (see \cite{Trotignon2013survey}) and thus interval graphs are $\chi$-bounded. 

In \cite{Asplund1960}, Asplund and Gr{\"u}nbaum generalized this result to $\RR^2$: the class of intersection graphs of axis-parallel rectangles in~$ \RR^2 $ is $\chi$-bounded. 

Starting from the third dimension, however, the situation changes.
In 1965, in \cite{Burling1965}, Burling proved that the class of intersection graphs of axis-parallel boxes (cuboids) in $ \RR^n$ is not $\chi$-bounded when $ n \geq 3 $. The core of his proof is to first reduce the problem to the case $n=3$ and then define a sequence of triangle-free graphs with arbitrarily large chromatic number, known as the \emph{Burling sequence}, where each graph in the sequence is the intersection graphs of axis-parallel boxes in $\RR^3$.
The class generated by the Burling sequence, i.e.\ the class of all induced subgraphs of the graphs in the Burling sequence, is known as the class of \emph{Burling graphs}. We describe this class via an equivalent definition later in the introduction. 

In 1970s, Erd\H{o}s asked whether the class of intersection graphs of line segments in $ \RR^2$ is a $ \chi$-bounded class. In 2012, Pawlik, Kozik, Krawczyk, Laso\'n, Micek, Trotter, and Walczak \cite{Pawlik2014linesegment} answered negatively to this question by proving that the class of line segment graphs contains all graphs of the Burling sequence. 

Later, the same authors \cite{pawlik2013general} generalized this result to any other set $ S \subseteq \RR^2 $ that has some reasonable constraints. They prove that for every compact and path connected set $ S \subseteq \RR^2 $ different from an axis-parallel rectangle (we call such sets \emph{Pouna sets}), the class of $ S $-graphs contains the Burling sequence and therefore is not $ \chi$-bounded. It is worth noting that this result, along with the earlier mentioned result of Asplund and Gr{\"u}nbaum, completes the study of $\chi$-boundedness of $ S $-graphs for compact and path-connected subsets of $ \RR^2 $. 

In \cite{Pawlik2014linesegment}, it is also explained how this result disproves a conjecture of Scott (Conjecture 8 in \cite{Scott1997}) from 1997. This new application of Burling graphs created new motivations to know this class of graphs better, in particular as intersection graphs. In particular, the question of finding proper subclasses of $ S $-graphs are equal to or are closer to the class of Burling graphs. 

With this motivation, in 2016, Chalopin, Esperet, Li and Ossona de Mendez \cite{Chalopin2016} studied the class of \emph{restricted frame graphs}, a class first introduced in \cite{Krawczyk2014}. A \emph{frame} is the boundary of an axis-parallel rectangle in $ \RR^2 $ with non-empty interior. From \cite{Pawlik2014linesegment}, we know that frame graphs (i.e.\ $ S $-graphs where $ S $ is a frame) contain Burling graphs. Restricted frame graphs are defined by setting a few restriction on the interaction of frames and as a result of those restrictions form a proper subclass of frame graphs. The class of restricted frame graphs, however, still contains all graphs in the Burling sequence and thus is not $\chi$-bounded. 

In~\cite{BG1PournajafiTrotignon2021}, Trotignon and the author introduced the class of \emph{strict frame graphs}, a subclass of restricted frame graphs, by adding one more restriction to the set of restrictions defined in \cite{Chalopin2016}. We proved that the class of strict frame graphs is the smallest subclass of frame graphs that contains all graphs in the Burling sequence. Similarly, by setting a few restriction on how the sets can intersect, we defined \emph{strict line-segment graphs} and \emph{strict box graphs}, the smallest subclasses of line segment graphs and box graphs, respectively, that contain all graphs in the Burling sequence.

\subsection*{In this article} 

For any Pouna set $S$ (that is, a compact and path-connected subset of $ \RR^2 $ that is different from an axis-parallel rectangle), by setting constraints on how the sets can interact, we define the class of \emph{constrained $ S $-graphs} and prove that the class of constrained $ S $-graphs is the smallest subclass of $ S$-graphs containing all graphs in the Burling sequence. In other words, for any such set $ S $, the class of constrained $ S $-graphs is equal to the class of Burling graphs. We also prove that the classes of constrained $S $-graphs for different sets $ S$ are all equal to a class that we call \emph{constrained graphs}.

Even though some terms are not defined here, we state a simplified form of our main theorem below. The same theorem holds for oriented graphs or non-oriented graphs, as we will discuss later. For the precise
statements, see \Cref{thm:main-complete-theorem} in \Cref{sec:proof-of-equality}.
\begin{mainthm*}
	Let $ G $ be a graph. The followings are equivalent:
	\begin{enumerate}
		\item $ G $ is a constrained graph, i.e.\ the intersection graph of a finite family of Pouna sets satisfying constraints \textup{(C1)-(C5)}. 
		\item $ G $ is an abstract Burling graphs and equivalently, a Burling graph.
		\item $ G $ is a constrained $ S $ graphs for some Pouna set $ S $, i.e.\ it is an $ S $-graph satisfying some constraints \textup{(C1)-(C6)}.
	\end{enumerate}
\end{mainthm*}

A core idea in the proof of the theorem above is to use abstract Burling graphs, an equivalent definition of Burling graphs by Trotignon and the author from \cite{BG1PournajafiTrotignon2021}. The generality of this definition allows for a very short proof of $(1)\implies (2)$. 

Let us state this definition already here. 

\begin{definition} \label{def:Burling-set}
	A \emph{Burling set} 
	is a triple $ (S, \desc, \adj) $ where $ S $ is a non-empty set, $\desc$ is a strict partial order on $S$,
	$ \adj $ is a binary relation on $S$ that does not have directed cycles, and such that the following axioms hold:
	
	\begin{enumerate}
		\item[\textbf{\textup{(A1)}}] if $ x \desc y $ and $ x \desc z $, then either $ y \desc z $ or $ z \desc y $, \label{item:descdesc}
		\item[\textbf{\textup{(A2)}}] if $ x \adj y $ and $ x \adj z $, then either $ y \desc z $ or $ z \desc y $, \label{item:adjadj}
		\item[\textbf{\textup{(A3)}}] if $ x \adj y $ and $ x \desc z $, then $ y \desc z $, \label{item:adjdesc}
		\item[\textbf{\textup{(A4)}}] if $ x \adj y $ and $ y \desc z $, then either $ x \adj z $ or $ x \desc z $. \label{item:transitiveboth}
	\end{enumerate}
\end{definition}

Notice that the tuple $ (S, \adj) $ is an oriented graph. 

\begin{definition} \label{def:abstract-Burling-graphs}
	An oriented graph $ G $ is an \emph{abstract Burling graph}\index{abstract Burling graphs} if there exists a partial order~$ \prec $ on~$ V(G) $ such that $ (V(G), \prec, A) $ forms a Burling set. A \emph{non-oriented abstract Burling graph} is the underlying graph of an oriented abstract Burling graph.
\end{definition}

Notice that if $ (S, \prec, \adj) $ is a Burling set and $ G = (S, \adj) $ is its corresponding abstract Burling graph, then for every induced subgraph $  G' $ of $ G $ is of the form $ (S', \adj)$ for some $ S' \subseteq S $. Moreover $ S'$ itself forms a Burling set with inherited relations $ \desc $ and $ \adj $. So, the set of all (oriented or non-oriented) abstract Burling graphs forms a class of graphs.

\begin{theorem}[Pournajafi and Trotignon \cite{BG1PournajafiTrotignon2021}] \label{thm:PTabstract-BG}
	A graph $ G $ is an abstract Burling graph if and only if it is a Burling graph. 
\end{theorem}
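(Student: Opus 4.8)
I would prove the two implications separately, in both cases working with the recursive construction of the Burling sequence $(G_k)_{k\ge 1}$, where $G_{k+1}$ is obtained by gluing copies of $G_k$ along their ``probe'' stable sets. The forward implication is a routine induction plus heredity; the reverse implication, which reconstructs such a gluing from the abstract axioms, is the real content.

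\textbf{Burling graphs are abstract Burling graphs.} Here the plan is an induction on $k$ showing that each $G_k$ carries a strict partial order $\desc$ and an acyclic orientation $\adj$ of its edges making $(V(G_k),\desc,\adj)$ a Burling set. The base case $G_1$ (one vertex, empty order and adjacency) is immediate. For the inductive step one reads the order and orientation off the assembly of $G_{k+1}$ from copies of $G_k$: inside each copy the relations are inherited, and a newly attached copy is placed entirely $\desc$-below the probe vertices to which it is joined, with every new edge oriented from the probe vertex into the new copy; acyclicity of $\adj$ is preserved because the new edges all go ``downward''. The bulk of the work is verifying that (A1)--(A4) survive this gluing, which is a finite case analysis according to which copies contain the two or three elements appearing in each axiom, using the inductive hypothesis within a single copy and the fact that each attachment is made along a stable set. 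Once every $G_k$ is an abstract Burling graph, the remark preceding the statement --- that an induced sub-configuration of a Burling set is again a Burling set --- shows that every induced subgraph of every $G_k$, i.e.\ every Burling graph, is an abstract Burling graph.

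\textbf{Abstract Burling graphs are Burling graphs.} Given a finite Burling set $(S,\desc,\adj)$, the goal is to embed the oriented graph $(S,\adj)$ as an induced subgraph of some $G_k$. First I would record the structural consequences of the axioms: by (A1) the elements $\desc$-above a fixed $x$ form a chain, so $(S,\desc)$ is a forest order (each element has at most one cover above it); by (A2) the out-neighbourhood of each vertex in $(S,\adj)$ is a $\desc$-chain; and (A3)--(A4) describe exactly how $\adj$-edges propagate up and down this forest. Using these I would argue by induction on $|S|$ (or on $|S|$ together with the maximal length of a $\desc$-chain): peel off a well-chosen piece --- a $\desc$-maximal element together with the subtree hanging below it that is ``$\adj$-separated'' from the rest, whose existence is guaranteed by acyclicity of $\adj$ --- so that the remainder is still a Burling set and hence, by induction, embeds into some $G_m$, and so that the removed piece attaches to the remainder precisely along a stable set in the pattern produced by one step (or a bounded number of iterated steps, used to create enough attachment sites) of the Burling recursion. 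This places all of $(S,\adj)$ inside $G_{m+c}$ for a controlled $c$.

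\textbf{Main obstacle.} The hard part is the reverse-engineering in the second implication: one must show that the combinatorial freedom permitted by (A1)--(A4) never exceeds what the rigid gluing pattern of the Burling sequence can realize, i.e.\ that an arbitrary finite Burling set can always be cut along its $\desc$-forest so as to match one or several steps of the recursion, and that such a cut is available at every stage of the induction. Keeping precise track of how many probes of the required kind are present after each recursive step --- so that the attachment claimed in the induction is legitimate --- is where the argument needs the most care.
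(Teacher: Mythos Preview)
The paper does not contain a proof of this theorem. \Cref{thm:PTabstract-BG} is stated with attribution to Pournajafi and Trotignon \cite{BG1PournajafiTrotignon2021} and is used as a black box; no argument for it appears anywhere in the present article. So there is no ``paper's own proof'' to compare your proposal against.

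That said, your outline is broadly in the right spirit for how such equivalences are typically established. The forward direction (every $G_k$ carries a Burling-set structure, hence so does every induced subgraph) is indeed the easy half and your inductive plan is sound. For the reverse direction, your structural observations are correct: axiom (A1) does force $[x\desc]$ to be a chain, so $(S,\desc)$ is a forest order, and (A2)--(A4) constrain how $\adj$ interacts with that forest. However, your ``peel off a well-chosen piece'' step is where a genuine argument is needed and your sketch is too vague to be assessed: you have not specified which piece to remove, why the remainder is still a Burling set, or---crucially---why the attachment pattern of the removed piece matches a probe of the Burling recursion at the right stage. The actual proof in \cite{BG1PournajafiTrotignon2021} goes through an intermediate combinatorial object (a ``Burling tree'') rather than directly reverse-engineering the recursion, precisely because tracking the probes through an unstructured induction on $|S|$ is awkward. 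If you want to carry out your direct approach you will need to make the decomposition explicit and prove that it terminates; as written, the proposal identifies the obstacle correctly but does not overcome it.
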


\subsection*{Paper outline} In \Cref{sec:notation}, we introduce the notations needed for this article. In \Cref{sec:constrained-S-graphs}, we introduce the classes of constrained $ S $-graphs as well as constrained graphs.  And then, in \Cref{sec:proof-of-equality}, we state the main theorem in details and prove that the classes of constrained graphs, constrained $ S $-graphs for any Pouna set $ S$, and Burling graphs are all equal.
To increase the readability of the paper, we include the proof of some basic topological lemmas in \Cref{appendix-proofs}. 

\section{Notation} \label{sec:notation}

The notation for graphs and oriented graphs are the standard definitions in graph theory. For any graph theoretical notion not defined here, we refer to~\cite{BondyMurty}.

We say that a hereditary class $ \mathcal C $ of graphs is \emph{generated} by a set $ \mathcal H $ of graphs if $ \mathcal C $ is exactly the class of all graphs $ G $ such that $ G $ is an induced subgraph of some graph in~$ \mathcal H $.

Let $ S $ be a set, and let $ \Rl $ be a binary relation on $ S $, that is, $\Rl \subseteq S\times S$. We write $ x \Rl y $ for $ (x,y) \in \Rl $, and $ x \notRl y $ for $ (x,y) \notin \Rl $. For an element $ s\in S $, we denote by $ [s \Rl] $ the set $ \{t \in S : s \Rl t \} $.
A \emph{directed cycle} in $ \Rl $ is a set of elements $ x_1, x_2, \dots, x_n $, with $ n \in \mathbb{N} $, such that $ x_1\Rl x_2$, $ x_2 \Rl x_3 $, $\dots$, $ x_n \Rl x_1 $. Note that when we deal with relations, we allow cycles on one or two elements. So, strict partial orders do not have directed cycles. In fact, a relation $\Rl $ has no directed cycles if and only if its transitive closure is a strict partial order.

For any topological notion not defined here, we refer to~\cite{MunkresTopology}. We always consider $ \RR^n $ with its usual topology. As explained in \Cref{sec:intro}, in this article, we only consider transformations of $ \RR^n $ whose projection on each axis is an affine function.  

For a set $ S $ in $\RR^d$, we denote the \emph{interior}\index{interior} and the \emph{closure}\index{closure} of $ S $ respectively by $ S^\circ$ and $ \bar{S}$. Moreover, we denote the \emph{boundary}\index{boundary} of $ S $ by $ \partial S $, i.e. $ \partial S = \bar{S} \sm S^\circ $. We denote the ball of radius~$ r $ and center~$ c $ in $ \RR^d $ by $ D(c, r) $. For a function $ f $, we denote its image by $\im{f} $, and its restriction to a set $ A $ in its domain by $ f|_{A}$.  We denote the projection on the $ i$-th axis in $ \RR^n $ by $ \rho_i$. 

We say that the path $ \gamma : [0,1] \rightarrow \RR^d $ in $\RR^d $ \emph{joins} the two points $ \gamma(0)$ and $ \gamma(1)$. Two paths $ \gamma_1:[0,1]\rightarrow$ and $ \gamma_2: [0,1]\rightarrow \RR^n $ are said to be \emph{internally disjoint}\index{path!internally disjoint} if
$$   \gamma_1([0,1]) \cap \gamma_2([0,1]) \subseteq \{\gamma_1(0), \gamma_1(1)\} \cap \{\gamma_2(0), \gamma_2(1)\} .$$

A \emph{box}\index{box} in $\RR^d $, is a set of the form $ B = \prod_{i=1}^{d} I_i, $ where $ I_i $ is a closed interval (thus possibly empty) in $ \RR $. So, boxes in $ \RR $ are intervals, in $ \RR^2$ are axis-parallel rectangles, and in $ \RR^3$ are axis-parallel cuboids. A \emph{frame}\index{frame} is the boundary of a box with non-empty interior in $ \RR^2 $.

Now let us focus on $ \RR^2$. Let $ S $ be a bounded subset of $ \RR^2 $. We define the following notions on $ S $: \begin{align*} \lset S &= \inf \{x : \exists y \ (x,y) \in S \}, \\ \rset S &= \sup \{x : \exists y \ (x,y) \in S \}, \\ \bset S &= \inf \{y : \exists x \ (x,y) \in S \}, \\ \tset S &= \sup \{y : \exists x \ (x,y) \in S \}.  \end{align*} 
The letters $\mathfrak{l} $, $ \mathfrak{r} $, $\mathfrak{b}$, and $ \mathfrak{t} $ stand for \emph{left}, \emph{right}, \emph{bottom}, and \emph{top}, respectively. If $ S $ is a compact set in $ \RR^2 $, then all the values above are finite and also, we can replace  $\inf$ and $\sup$ by $ \min $ and $\max$ respectively. In this case, we also define $ \wset S = \rset S - \lset S$ and $ \hset{S} = \tset{S} - \bset{S} $. The letters $ \mathfrak{w} $ and $\mathfrak{h} $ stand for \emph{width} and \emph{height} respectively. Notice that if $ S' \subseteq S $, we have $ \lset{S'} \geq \lset{S} $, $ \rset{S'} \leq \rset{S} $, $ \bset{S'}\geq \bset{S} $, and $ \tset{S'} \leq \tset{S}$. 

The \emph{bounding box}\index{bounding box} of a bounded set $ S \subseteq \RR^2 $, denoted by $ \boxset S $, is the (inclusion-wise) smallest closed rectangle in $\RR^2$ containing $ S $. Equivalently,
$$ \boxset S  = [\lset S, \rset S] \times [\bset S, \tset S]. $$ 
So, $ \lset{\boxset{S}} = \lset{S} $, $ \rset{\boxset{S}} = \rset{S} $, etc.  If $ \mathcal F $ is a family of bounded subsets of $ \RR^2 $ and $ T $ is a transformations (of the form mentioned earlier), we use the unconventional notation $ T(\mathcal F) $ for the family $ \{T(S) : S \in \mathcal F \} $. It is easy to see that that $ \boxset{T(\mathcal F)} = T(\boxset{\mathcal F})$.

Recall that with the mentioned constraint on transformations of $ \RR^n$, any transformation $ T:\RR^2 \rightarrow \RR^2 $ of $ \RR^2 $ that we deal with is of the form 
$$ T(x,y) = (ax+c, by+d),  $$  for some $ a, b \in \RR^* = \RR \sm \{0\} $ and $ c,d \in \RR$.
We say that $ T $ is a \emph{positive}\index{transformation!positive} transformation if $ a > 0 $ and $ b >0 $. It is easy to see that positive transformations with composition form a group. In particular: \begin{itemize} \item the composition of two positive transformations is a positive transformation,  \item every positive transformation has an inverse. \end{itemize}

Several times, we use the fact that if $ T:(x,y) \mapsto (ax+c, by+d) $ is a positive transformation and $ S $ is a compact set in $ \RR^2$, then setting $ S' = T(S) $, we have: \begin{multline*} \lset{S'} = a. \lset{S} + c, \  \rset{S'} = a. \rset{S} +c, \ \bset{S'} = b. \bset{S}+d, \ \text{and } \tset{S'} = b. \tset{S}+d.  \end{multline*} 
In particular, $ \boxset{T(S)} = T(\boxset{S})$. 

We say that $ S' $ is a \emph{positive transformed copy}\index{transformed copy!positive} of $ S $ if $ S'=T(S) $ for some positive transformation $ T$. The \emph{horizontal reflection} of $ S $ is $T(S) $ where~$ T $ is the transformation that maps~$(x,y)$ to~$(-x, y)$.

\section{Constrained graphs and constrained $ S $-graphs} \label{sec:constrained-S-graphs}

In this section we define the class of contained graphs and the class of constrained $ S$-graphs. But first, we need some definitions and lemmas.

\subsection{Pouna sets and their territories}

\begin{definition}\label{def:PounaSet}
	A subset $ S $ of $ \RR^2 $ is said to be a \emph{Pouna set}\index{Pouna sets} if it is path-connected and compact, and is not an axis-parallel rectangle.
\end{definition}

	The \emph{territory} of a Pouna set $ S $, denoted by $\Ter S $, is defined as follows:
	\begin{equation*}
		\Ter S = \{(x,y)  \in \boxset S \sm S : \exists x'\in \RR \text{ s.t. } x' > x \text{ and } (x',y) \in S \}.
	\end{equation*}

	We say that a Pouna set $ S $ is \emph{strong} if it has a non-empty territory.

In Figure~\ref{fig:territory_examples}, some examples of strong Pouna sets and their territories are represented. In particular, a frame is an example of a strong Pouna set.

In figures of Pouna sets, we do not always represent the territory as it is well-defined given the Pouna set. But whenever we represent the territory, we show the Pouna set in solid colors, and the territory in hatch.

\begin{figure} 
	\centering 
	\vspace*{-1cm} 
	\includegraphics[width=9.5cm]{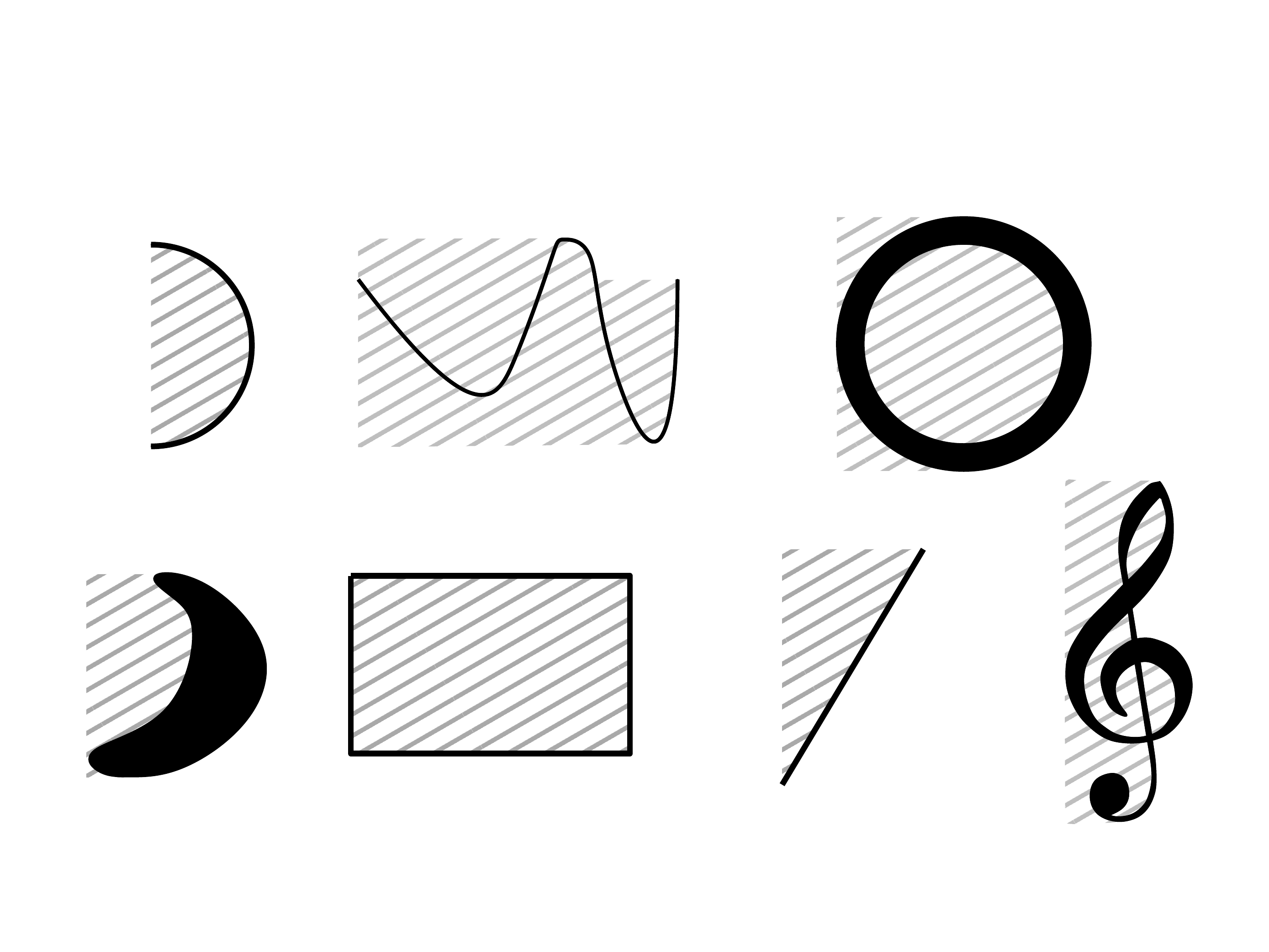} 
	\vspace*{-1cm} 
	\caption{Examples of strong Pouna sets and their territories. The Pouna sets are shown in black and their territories in hatch.} \label{fig:territory_examples} 
\end{figure}

Strong Pouna sets give us the possibility of using the properties of the territory, and they are not more restrictive than general Pouna sets as \Cref{prop:strong-perkins-S-or-horizontal-reflection} below shows. 

We first need a lemma whose proof can be found in \Cref{appendix-proofs}. 

\begin{lemma} \label{lem:real-border-lemma} Let $ X $ be a topological space and let $ A, B \subseteq X$. If $ B $ is connected, $ B \cap A^\circ \neq \varnothing$, and $ B \cap [X \sm \bar{A}] \neq \varnothing $, then $ B \cap \partial A \neq \varnothing $.  
\end{lemma}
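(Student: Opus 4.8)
The plan is to prove this via a connectedness argument: I would show that if $B \cap \partial A = \varnothing$, then $B$ can be partitioned into two nonempty relatively open sets, contradicting the connectedness of $B$.

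First I would set up the partition. Suppose for contradiction that $B \cap \partial A = \varnothing$. Since $\partial A = \bar A \sm A^\circ$, this means $B \cap \bar A \subseteq A^\circ$, so every point of $B$ lying in $\bar A$ actually lies in $A^\circ$. Now define $U = B \cap A^\circ$ and $V = B \cap (X \sm \bar A)$. Observe $A^\circ$ is open in $X$ and $X \sm \bar A$ is open in $X$, so both $U$ and $V$ are relatively open in $B$. By hypothesis $B \cap A^\circ \neq \varnothing$ and $B \cap (X \sm \bar A) \neq \varnothing$, so $U$ and $V$ are both nonempty. They are clearly disjoint since $A^\circ \subseteq \bar A$ is disjoint from $X \sm \bar A$.

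Next I would check that $U \cup V = B$. Take any $b \in B$. Either $b \in \bar A$ or $b \in X \sm \bar A$. In the second case $b \in V$. In the first case, $b \in B \cap \bar A$, and since we assumed $B \cap \bar A \subseteq A^\circ$ (equivalently $B \cap \partial A = \varnothing$), we get $b \in A^\circ$, hence $b \in U$. So $B = U \sqcup V$ is a separation of $B$ into two nonempty relatively open sets, contradicting the assumption that $B$ is connected. Therefore $B \cap \partial A \neq \varnothing$.

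This argument is essentially complete and routine; there is no real obstacle. The only point requiring a moment's care is the elementary set-theoretic identity that $B \cap \partial A = \varnothing$ is equivalent to $B \cap \bar A \subseteq A^\circ$, which follows immediately from $\partial A = \bar A \sm A^\circ$ and the fact that $A^\circ \subseteq \bar A$. One could alternatively phrase the proof using the relative closures of $U$ and $V$ in $B$, but the open-separation version above is the cleanest.
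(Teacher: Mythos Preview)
Your proof is correct and follows essentially the same approach as the paper's: both argue by contradiction, writing $B$ as the union of the relatively open sets $B\cap A^\circ$ and $B\cap(X\sm\bar A)$, and observing that if $B\cap\partial A=\varnothing$ this gives a separation of $B$ contradicting connectedness. Your write-up is in fact slightly more careful in verifying that $U\cup V=B$.
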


\begin{property} \label{prop:int-box-min-S-non-empty} 
	If $ S $ a Pouna set, then $ \boxset{S}^\circ \sm S \neq \varnothing$. 
\end{property} 
\begin{proof} First of all, $ S $ is not a subset of an axis-aligned  line-segment. So, the closure of $\boxset{S}^\circ $ is equal to $\boxset{S}$. Now, if $\boxset{S}^\circ \sm S = \varnothing $, then $ \boxset{S}^\circ \subseteq S \subseteq \boxset{S} $, and since $ S $ is closed, we have $ S = \boxset{S} $, and $ S $ is an axis-aligned rectangle.  
\end{proof}

\begin{lemma} \label{prop:strong-perkins-S-or-horizontal-reflection} For every Pouna set $ S $, either $ S $ or its horizontal reflection is strong. 
\end{lemma}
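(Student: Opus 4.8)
The plan is to reduce both ``$S$ is strong'' and ``the horizontal reflection of $S$ is strong'' to statements about horizontal lines, and then derive a contradiction from the failure of both using \Cref{prop:int-box-min-S-non-empty} together with path-connectedness and compactness. Write $R(S)$ for the horizontal reflection of $S$, i.e.\ $R(x,y)=(-x,y)$. First I would unwind the definition of $\Ter{R(S)}$: since $\boxset{R(S)}=R(\boxset S)$ and $(x,y)\in R(S)\iff(-x,y)\in S$, one checks that $(x,y)\in\Ter{R(S)}$ if and only if, setting $u=-x$, the point $(u,y)$ lies in $\boxset S\sm S$ and there exists $x''<u$ with $(x'',y)\in S$. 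Thus $S$ is strong precisely when some point of $\boxset S\sm S$ has a point of $S$ strictly to its right on the same horizontal line, and $R(S)$ is strong precisely when some point of $\boxset S\sm S$ has a point of $S$ strictly to its left on the same horizontal line.

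Next I would assume for contradiction that neither $S$ nor $R(S)$ is strong. By \Cref{prop:int-box-min-S-non-empty} there is a point $(u_0,y_0)\in\boxset S^\circ\sm S$; being in the interior of the bounding box, it satisfies $\bset S<y_0<\tset S$. Since $(u_0,y_0)\notin S$ and, by the assumption, the horizontal line $\{y=y_0\}$ contains no point of $S$ strictly to the right nor strictly to the left of $(u_0,y_0)$, the whole line $\{y=y_0\}$ is disjoint from $S$.

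Finally I would contradict this using the hypotheses on $S$. Compactness of $S$ guarantees points $p\in S$ with $\rho_2(p)=\bset S$ and $q\in S$ with $\rho_2(q)=\tset S$; since $\bset S<y_0<\tset S$, the second coordinate of $p$ is below $y_0$ and that of $q$ is above $y_0$. Path-connectedness of $S$ yields a path $\gamma:[0,1]\to S$ from $p$ to $q$, and $\rho_2\circ\gamma$ is a continuous real function taking a value $<y_0$ and a value $>y_0$, so by the intermediate value theorem it attains the value $y_0$ somewhere; the corresponding point of $\gamma$ lies in $S$ and on the line $\{y=y_0\}$, contradicting the previous paragraph. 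Hence at least one of $S$, $R(S)$ is strong.

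The argument is elementary; the only real care needed is the bookkeeping in the translation between $\Ter{R(S)}$ and horizontal lines through $S$ (the direction of the inequality $x''<u$), and making sure one invokes \Cref{prop:int-box-min-S-non-empty} rather than just some point of $\boxset S\sm S$, so that the chosen $y_0$ is strictly interior to $[\bset S,\tset S]$ and the intermediate value step applies.
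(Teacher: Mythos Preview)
Your proof is correct and follows essentially the same approach as the paper: pick a point in $\boxset S^\circ\setminus S$ via \Cref{prop:int-box-min-S-non-empty}, then use connectedness to produce a point of $S$ on the same horizontal line, which witnesses non-emptiness of either $\Ter S$ or $\Ter{R(S)}$ according to whether it lies to the right or to the left. The only cosmetic differences are that you argue by contradiction and invoke the intermediate value theorem on $\rho_2\circ\gamma$ directly, whereas the paper argues directly and appeals to \Cref{lem:real-border-lemma} for the same step.
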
 

\begin{proof} Let $ S' = T(S) $ be the horizontal reflection of $ S $ (thus, $ T:(x,y) \mapsto (-x,y)$). 
	
	By \Cref{prop:int-box-min-S-non-empty}, we can choose a point $ p = (x,y) \in \boxset{S}^\circ \sm S$. Let $ L $ be the horizontal line passing through $ p $, and set $ A $ to be the closed half-plane consisting of the points on $ L $ and under $L$. Notice that $ \bset{S} < y < \tset{S} $, so $ S $ has a point on the top-side of $ \boxset{S}$, thus outside $A = \bar{A} $ and a point on the bottom-side of $\boxset{S}$, thus inside $ A^\circ $. Setting $ B = S $ in the statement of \Cref{lem:real-border-lemma}, we conclude that $ S \cap L \neq \varnothing $. In other words, there is a point $ p= (x',y) \in \mathcal S $.   If $ x'> x $, then $ p \in \Ter{S} $, and $ S $ is strong. If $ x'< x $, then $ -x' > -x $. Notice that $ (-x', y) \in S' $ and $ (-x, y) \in \boxset{S'}\sm S' $. So, $(-x, y) \in \Ter{S'} $, and $ S' $ is strong. 
\end{proof}

Let $ A $ and $ B $ be two strong Pouna sets. We write \emph{$A \prec B$} if $ \boxset{A} \subseteq \Ter{B}$.

\begin{figure}
	\centering
	\includegraphics[width=10cm]{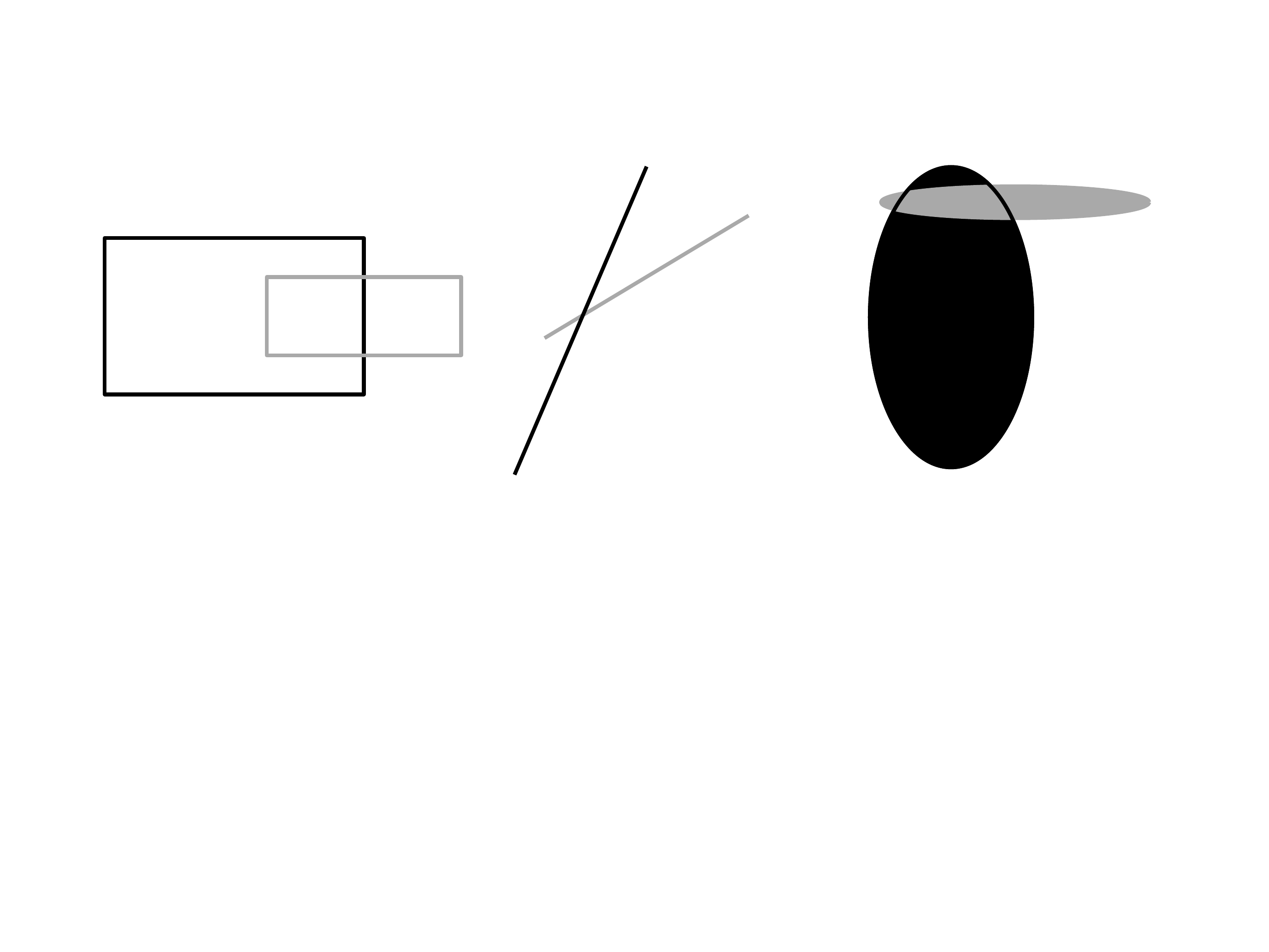}
	\vspace*{-3cm}
	\caption{The relation $ A \adj B $. In the figures above, $ A $ is shown in gray and $ B $ in black.} \label{fig:adj-examples}
\end{figure}

We also write \emph{$ A \adj B $} if $ A $ and $ B $ are distinct intersecting sets with the following properties: 
\begin{itemize} 
	\item $\lset B \leq \lset A < \rset B < \rset A $, 
	\item $\bset B < \bset A < \tset A < \tset B$, 
	\item $ \{ (x,y) \in A : x  = \lset A \} \subseteq \Ter B $.  
\end{itemize} 
See \Cref{fig:adj-examples} for some examples.

\subsection{Constrained graphs} \label{subsec:constrained-graphs}

Let $ \mathcal F $ be a non-empty and finite family of strong Pouna sets satisfying the following constraints: 
\begin{enumerate}	 
	\item[\textbf{\textup{(C1)}}]  for every $ A, B \in \mathcal F $, if $ A \neq B $ and $ A \cap B \neq \varnothing $, then,  either $ A \adj B $ or $ B \adj A $. \label{gcond:adjacent}
	
	\item[\textbf{\textup{(C2)}}] For every $ A, B \in \mathcal F $, if $ A \cap B = \varnothing $ and  $ A \cap \Ter B  \neq \varnothing$,  then  $ A \prec B$. \label{gcond:prec}
	
	\item[\textbf{\textup{(C3)}}] For every $ A, B \in \mathcal F $, if $ A \neq B $ and $ A \cap B \neq \varnothing $, then there exists no $ C \in \mathcal F $ such that $ C \subseteq \Ter A  \cap \Ter B $. \label{gcond:common-territory}
	
	\item[\textbf{\textup{(C4)}}] There exist no $ A, B, C \in \mathcal F $ such that $ A \prec B $, $ A \adj C $, and $ B \adj C $.  \label{gcond:strict-condition}
	
	\item[\textbf{\textup{(C5)}}] The maximum number of pairwise intersecting and distinct elements in $ \mathcal F $ is at most two. \label{gcon:triangle-free} 
\end{enumerate}

Let $ \mathcal F $ be a finite family of strong Pouna sets satisfying \textup{(C1)}. We say that $ G $ is the \emph{oriented intersection graph} \index{intersection graph!oriented} of $ \mathcal F $ if $ V(G) = \mathcal F$ and $  A(G) = \{AB: A \adj B \} $. Notice that the underlying graph of $ G $ is the intersection graph of $\mathcal F$ because for distinct element $ A, B \in \mathcal F $, we have $ A\cap B \neq \varnothing $ if and only if $ A \adj B $ or $ B \adj A$.

\begin{definition} \label{def:constrained-graphs}\index{constrained graphs}
	An oriented graph (resp.\ graph) is called an \emph{oriented constrained graph} (resp.\ a \emph{constrained graph}) if it is isomorphic to the oriented intersection graph (resp.\ intersection graph) of a non-empty family of strong Pouna sets satisfying Constraints \textup{(C1)-(C5)}.
\end{definition}

\subsection{Constrained $ S$-graphs}

\begin{definition} \label{def:constrained-S-graphs}\index{constrained $S$-graphs} 
	Let $ S $ be a Pouna set. 
	An oriented graph (resp.\ graph) is called an \emph{oriented constrained $S$-graph} (resp.\ a \emph{constrained $S$-graph}) if it is isomorphic to the oriented intersection graph (resp.\ intersection graph) of a non-empty family of $ \mathcal F $ of transformed copies of $ S $ that satisfies Constraints \textup{(C1)-(C5)}, as well as the following constraint: 
	\begin{enumerate}  
		\item[\textbf{\textup{(C6)}}] if $ S $ is strong, then all elements of $ F $ are positive transformed copies of $ S $, and otherwise, they are all positive transformed copies of the horizontal reflection of~$ S $.
	\end{enumerate} 
\end{definition} 

Notice that the set of all oriented constrained $ S $-graphs and the set of all constrained $ S $-graphs both form hereditary classes of graphs. 

\medskip
By definition, every constrained $ S $-graph is a constrained graph. However, as we will see in \Cref{sec:proof-of-equality}, the two classes are indeed equal, and in particular, the class of constrained $ S $-graphs does not change for different sets $ S $.

	Remember that for every Pouna set $ S $, either $ S $ or its horizontal reflection is a strong Pouna set (see \Cref{prop:strong-perkins-S-or-horizontal-reflection}). Therefore, restricting our definition of constrained graphs and constrained $ S $-graphs to strong Pouna sets instead of Pouna sets does not reduce the generality of the definition.

Applied to a specific set $ S$, the definition of constrained $ S $-graphs becomes rather intuitive. For example, when $ S $ is the boundary of a rectangle in $ \RR^2 $, constrained $ S $-graphs are exactly \emph{strict frame graphs}. Also, when $ S $ is a non-vertical and non-horizontal line segment, constrained $ S $-graphs are exactly \emph{strict line-segment graphs} (defined in Section 6 of~\cite{BG1PournajafiTrotignon2021}). 

See \Cref{fig:examples-constrained-S-graphs} for two more examples of constrained $ S $-graphs where $ S $ is a circle and when $ S $ is a square that is not axis-aligned. In each row of the figure, from left to right, the pictures represent the following: 
\begin{itemize} 
	\item The first picture shows the set $ S $ (in solid color) and its territory (hatched). For the rest of the figures, we have not shown the territory anymore. 
	\item The second picture shows the way that two sets can intersect, i.e. what is described by Constraint (C1). 
	
	\item The third picture represents Constraint (C2). In other words, it shows that if two sets do not intersect but one has an intersection with the territory of the other, how they must be placed. Notice that in the first line, there are two possibilities to place a transformation of the circle in the territory of the other transformation of the circle with no intersection.  
	
	\item The fourth picture shows the forbidden construction in Constraint (C3).  
	
	\item The fifth picture shows the forbidden construction in Constraint (C4).  
	
	\item Finally, we must keep in mind that there must not be three distinct sets that mutually intersect.  
\end{itemize}

\begin{figure} 
	\centering 
	\vspace*{-1cm} 
	\includegraphics[width=12cm]{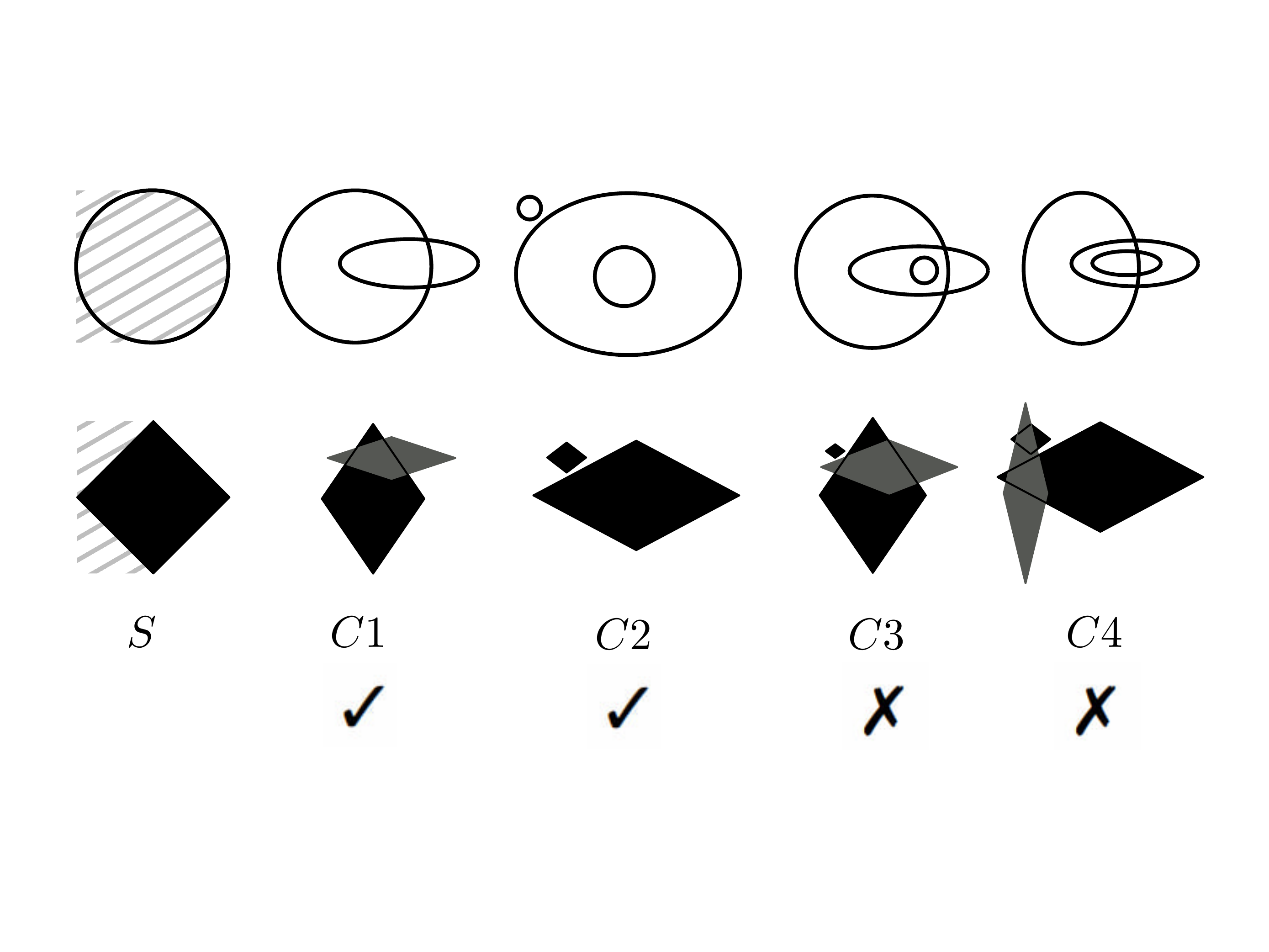} 
	\vspace*{-1.8cm} 
	\caption{Examples of constrained $ S$-graphs} \label{fig:examples-constrained-S-graphs} 
\end{figure}

\Cref{fig:examples-of-constrained-S-graph} shows that $ C_6$ and $ K_{3,3}$ are Constrained $S$-graphs for $ S $ equal to a circle and a positively sloped line-segment respectively. 

\begin{figure}
	\centering
	\vspace*{-1.5cm} 
	\includegraphics[width=11cm]{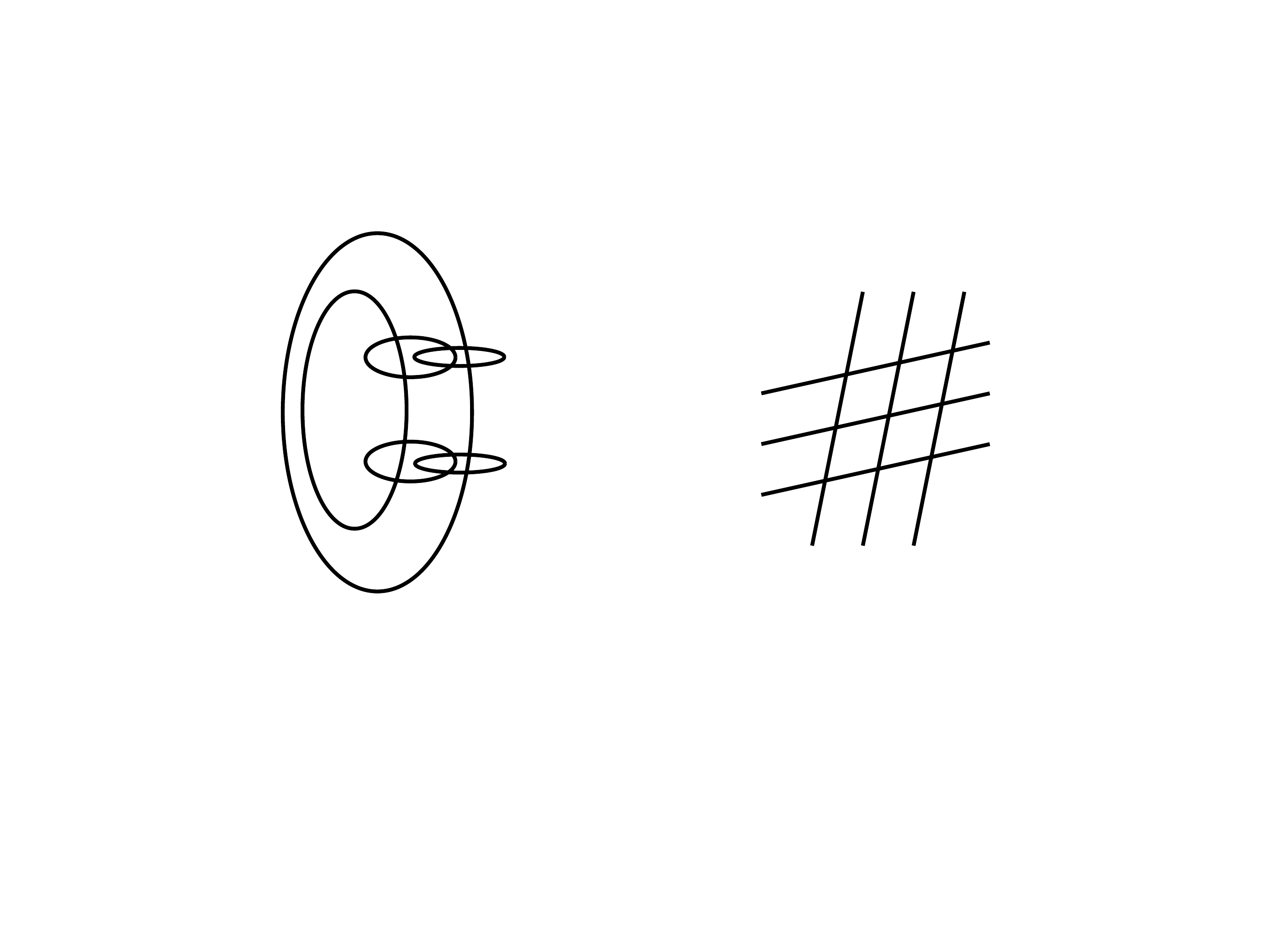}
	\vspace*{-2.8cm} 
	\caption{Left: $C_6 $ as a constrained circle graph. Right: $K_{3.3}$ as a constrained positively sloped line-segment graph.} \label{fig:examples-of-constrained-S-graph}
\end{figure}

\section{Equality of the three classes} \label{sec:proof-of-equality}

Let us now state our main theorem more precisely. 

\begin{theorem} \label{thm:main-complete-theorem}
Let $ G $ be an oriented graph, the followings are equivalent: 
\begin{enumerate}
	\item $ G $ is a constrained graph.
	\item $ G $ is an abstract Burling graphs (and equivalently, a Burling graph). 
	\item $ G $ is a constrained $ S $ graphs for some Pouna set $ S $.
\end{enumerate}
In particular, the following classes of non-oriented graphs are equal: Burling graphs, constrained graphs, and constrained $ S $-graphs for any Pouna set $ S $.
\end{theorem}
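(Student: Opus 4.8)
The plan is to prove the cycle of implications $(1)\implies(2)\implies(3)\implies(1)$. The last implication is immediate: by definition every constrained $S$-graph (for any Pouna set $S$) is obtained from a family of transformed copies of $S$ satisfying (C1)--(C5), hence is in particular a family of strong Pouna sets satisfying (C1)--(C5) (using \Cref{prop:strong-perkins-S-or-horizontal-reflection} and (C6) to ensure strongness), so it is a constrained graph. The implication $(1)\implies(2)$ is where the abstraction pays off: given a family $\mathcal F$ of strong Pouna sets satisfying (C1)--(C5), I would set $\prec$ to be the restriction of the relation defined by $A\prec B \iff \boxset A\subseteq\Ter B$ and $\adj$ to be the restriction of the relation $A\adj B$, and check that $(\mathcal F,\prec,\adj)$ is a Burling set, i.e.\ verify axioms (A1)--(A4) of \Cref{def:Burling-set}. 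Here (A1) should follow from a geometric nesting argument about territories (if $\boxset A$ and $\boxset C$ both lie inside $\Ter B$, the structure of $\Ter B$ as the part of $\boxset B\sm B$ lying to the left of $S$ forces them to be comparable, with (C3) ruling out the ``incomparable but overlapping'' case); (A2) should follow similarly combined with (C1) and (C3); (A3) is essentially a monotonicity statement about how $\adj$ pushes boxes into territories, using (C2); and (A4) should use (C1), (C2), and crucially (C4) to forbid the bad configuration. That $\adj$ has no directed cycles follows from (C5) together with the strict inequalities in the definition of $\adj$ (each forces a strict decrease of, say, $\bset{\cdot}$ along a directed path). Then \Cref{thm:PTabstract-BG} gives that $G$ is a Burling graph.

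The substantive implication is $(2)\implies(3)$: every abstract Burling graph is realizable as a constrained $S$-graph for every Pouna set $S$. The plan is to fix a strong Pouna set $S$ (replacing $S$ by its horizontal reflection if necessary, via \Cref{prop:strong-perkins-S-or-horizontal-reflection}), take a Burling set $(V(G),\prec,A)$ witnessing that $G$ is an abstract Burling graph, and build by induction on $|V(G)|$ a family $\mathcal F=\{P_v : v\in V(G)\}$ of positive transformed copies of $S$ realizing the oriented intersection graph, satisfying (C1)--(C6). I would order the vertices respecting some linear extension compatible with $\prec$ and the ``adjacency-creation'' structure (the paper's operators $\nextB,\nextT,\nextF$ suggest the intended induction mirrors the Burling derivation/expansion), and at each step place a new copy of $S$, suitably scaled and translated, so that: its bounding box sits inside the territory of the copies it should be $\prec$-below; it intersects (in the $\adj$ pattern prescribed by the three bullet inequalities and the ``left boundary inside $\Ter B$'' condition) exactly the copies of its out-/in-neighbours in $A$; and it avoids all other copies. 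The key technical input is a collection of elementary topological facts about strong Pouna sets and their territories (of the kind relegated to \Cref{appendix-proofs}): that $\Ter S$ has non-empty interior and in fact contains a small box, that one can position a scaled copy of $S$ with its left boundary landing inside a prescribed sub-territory, and that the relations $\prec$ and $\adj$ are stable under positive transformations. Maintaining (C3) and (C4) throughout the construction — that no third set is ever trapped in a common territory and that the ``$A\prec B$, $A\adj C$, $B\adj C$'' pattern never arises — will require the inductive hypothesis to record enough of the order structure, and this bookkeeping is where I expect the main difficulty to lie.

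I expect the hardest part to be the construction in $(2)\implies(3)$, specifically choosing, at each inductive step, the scaling and translation of the new copy of $S$ so that the prescribed intersections are created \emph{and} constraints (C3) and (C4) are preserved \emph{and} no spurious intersections appear — all three simultaneously. The cleanest route is probably to first establish a normal-form / ``canonical embedding'' lemma for Burling sets (perhaps using the operators $\nextB,\nextT,\nextF$ from the notation section to describe how an abstract Burling graph is generated), reducing the general case to placing copies in a few stereotyped relative positions, and then to verify each constraint in that normal form using the topological lemmas about $\Ter S$. The verification of (A1)--(A4) in $(1)\implies(2)$ is conceptually the other place requiring care, but it is a finite case analysis driven directly by the definitions of $\prec$, $\adj$, and (C1)--(C5), so I expect it to be routine once the right pictures are drawn.
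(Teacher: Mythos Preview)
Your overall cycle $(1)\Rightarrow(2)\Rightarrow(3)\Rightarrow(1)$ matches the paper, and your treatment of $(3)\Rightarrow(1)$ and the \emph{shape} of $(1)\Rightarrow(2)$ are right. Two points deserve correction or comparison.

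\medskip
\textbf{On $(1)\Rightarrow(2)$: the constraint--axiom matching is off.} In the paper's verification, (A2) is handled with (C5) (triangle-freeness), not (C3): from $A\adj B$ and $A\adj C$ one gets $\Ter B\cap\Ter C\neq\varnothing$, hence comparability by \Cref{lem:comparable-Pouna-sets}, and then (C5) forbids $B\cap C\neq\varnothing$. Conversely, (C4) is what drives (A3), not (A4): from $A\adj B$ and $A\prec C$, if $B\cap C\neq\varnothing$ one would get $C\adj B$, and the triple $A\prec C$, $A\adj B$, $C\adj B$ is exactly what (C4) forbids; then (C2) finishes. Axiom (A4) needs only (C2) and the height inequality from \Cref{lem:prec-properties}. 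Also, acyclicity of $\adj$ comes straight from $A\adj B\Rightarrow \rset B<\rset A$; (C5) is not needed there, and $\bset{\cdot}$ moves the wrong way. None of this is fatal to your plan, but the case analysis you sketch would not go through as written.

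\medskip
\textbf{On $(2)\Rightarrow(3)$: the paper takes a different and much lighter route.} You propose to take an arbitrary Burling set and embed it vertex by vertex into copies of $S$, maintaining (C1)--(C6) inductively. The paper does \emph{not} attempt this. Instead it reuses the recursive construction of Pawlik, Kozik, Krawczyk, Laso\'n, Micek, Trotter, and Walczak: a sequence $(\mathcal F_k,\mathcal P_k)$ of families of copies of $S$ together with sets of ``stable probs'', built via the operators $\Gamma$ and $\nextF$. The work is then to check, by induction on $k$, that each $\mathcal F_k$ satisfies (C1)--(C6) (this is \Cref{lem:Gamma-preserves-C1-C6} and \Cref{thm:equalthm-Burling-is-constraintS}), and to invoke the known fact that the intersection graphs of the $\mathcal F_k$ generate exactly the class of Burling graphs. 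The ``stable prob'' machinery is precisely the bookkeeping device that makes (C3), (C4), and (C5) automatic at each step, replacing the ad hoc placement problem you anticipate as the hard part. Your suggested fallback---a normal-form lemma describing how Burling sets are generated---is essentially what the paper does, but it imports the normal form from the existing Burling-sequence construction rather than deriving one from scratch. Your direct-embedding plan is not obviously wrong, but it would require reinventing this recursive scaffolding; the paper's approach avoids that by leaning on \cite{pawlik2013general}.
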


We prove $(1) \implies (2) $ in~\Cref{sec:proof-const.-implies-abstractBG}, and $(2) \implies (3)  $ in~\Cref{sec:proof-BG-implies-const.S-graph}. 
Then we conclude the proof. 

\subsection{Constrained graphs are abstract Burling graphs} \label{sec:proof-const.-implies-abstractBG}

In this Section, we first prove some more properties of Pouna sets anf then use these properties to show that constrained graphs are Abstract Burling graphs. 


\begin{lemma} \label{lem:prec-properties} 
	Let $ A $ and $ B $ be two strong Pouna sets. If $ A \prec B $, then  
	\begin{enumerate} 
		\item $\rset A  < \rset B  $, 
		\item $\hset{A} \leq \hset{B} $.
	\end{enumerate}  
\end{lemma}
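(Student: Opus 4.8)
The plan is to unwind the definitions of $\prec$ and $\Ter{\cdot}$ and use the monotonicity of $\mathfrak l, \mathfrak r, \mathfrak b, \mathfrak t$ under inclusion. Recall that $A \prec B$ means $\boxset A \subseteq \Ter B$, and by definition
\[
\Ter B = \{(x,y) \in \boxset B \sm B : \exists x' > x \text{ with } (x',y) \in B\}.
\]
In particular $\Ter B \subseteq \boxset B$, so $\boxset A \subseteq \boxset B$, which already gives $\rset A \le \rset B$, $\hset A \le \hset B$, and the analogous inequalities for $\mathfrak l, \mathfrak b, \mathfrak t$. This immediately yields part (2). For part (1) I need the \emph{strict} inequality $\rset A < \rset B$, so the containment in the bounding box is not enough on its own; I must use the extra content of $\Ter B$, namely the existence of a point of $B$ strictly to the right.

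For part (1): since $A$ is a (strong) Pouna set it is compact and non-empty, so $\rset A$ is attained, i.e. there is a point $p = (\rset A, y_0) \in A \subseteq \boxset A \subseteq \Ter B$. Applying the defining condition of $\Ter B$ to $p$, there exists $x' \in \RR$ with $x' > \rset A$ and $(x', y_0) \in B$. Since $(x', y_0) \in B \subseteq \boxset B$, we get $\rset B \ge x' > \rset A$, which is exactly part (1). (One could also note $p \in \boxset B \sm B$ forces $p \notin B$, but this is not needed for the argument.)

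I expect no real obstacle here; the only subtlety is making sure to invoke compactness of $A$ so that the supremum defining $\rset A$ is actually realized by a point of $A$ — this is what converts the weak inclusion $\boxset A \subseteq \boxset B$ into the strict inequality via the witness $x'$. The monotonicity facts ($S' \subseteq S \implies \rset{S'} \le \rset S$, etc.) are already recorded in \Cref{sec:notation}, and the identity $\lset{\boxset S} = \lset S$ and its siblings let me pass freely between $A$ and $\boxset A$.
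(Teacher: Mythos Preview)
Your proposal is correct and follows essentially the same approach as the paper: for (1) you use compactness of $A$ to realize $\rset A$ by a point of $A\subseteq\Ter B$ and then invoke the witness $x'>\rset A$ in $B$, and for (2) you use $\boxset A\subseteq\Ter B\subseteq\boxset B$ together with monotonicity of $\mathfrak b,\mathfrak t$ under inclusion. The paper's proof is identical in substance.
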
 
\begin{proof} 
	To prove (1), let $ r = \rset A  $. Because $ A$ is compact, there exists a point $ (r, y) $ in $ A $. Since $ A\prec B$, we have $(r,y) \in \Ter B $. Therefore, there exists $ r' $ such that $ r'> r $ and $ (r', y) \in B $. Notice that, $  r' \leq \rset B$. Hence, $ \rset A < \rset B $.

	To prove (2), notice that $ A \subseteq \boxset{A} \subseteq \Ter{B} \subseteq \boxset{B}$. So, $ \bset{A} \geq \bset{\boxset{B}} = \bset{B}$ and $ \tset{A} \leq \tset{\boxset{B}} = \tset{B} $. Therefore, $ \hset{A} = \tset{A} - \bset{A} \leq \tset{B} - \bset{B} = \hset{B}$. 
\end{proof}

We say that two strong Pouna sets $A$ and $ B $ are \emph{comparable}\index{Pouna sets!comparable} if one of the following happens: $ A \adj B $, $ B \adj A $, $ A \prec B $, or $ B \prec A $. 

\begin{lemma} \label{lem:comparable-Pouna-sets} 
	Let $ A $ and $ B $ be two strong Pouna sets in a family $\mathcal F $ which satisfies Constraints \emph{(C1)} and \emph{(C2)}. If $ \Ter A  \cap \Ter B \neq \varnothing $, then $ A$ and $ B $ are comparable.   
\end{lemma}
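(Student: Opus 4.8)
The goal is to show that if $A, B$ are strong Pouna sets in a family satisfying (C1) and (C2) with $\Ter A \cap \Ter B \neq \varnothing$, then $A$ and $B$ are comparable, i.e. one of $A \adj B$, $B \adj A$, $A \prec B$, $B \prec A$ holds. My plan is to pick a witness point $p \in \Ter A \cap \Ter B$ and run a case analysis on how $p$ sits relative to the two sets $A$ and $B$ themselves (not just their territories).

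First I would fix $p = (x_0, y_0) \in \Ter A \cap \Ter B$. By definition of territory, $p \in \boxset A \sm A$ and $p \in \boxset B \sm B$; in particular $p \notin A$ and $p \notin B$. Now I would split according to whether $A \cap \Ter B = \varnothing$ or not, and symmetrically whether $B \cap \Ter A = \varnothing$ or not.

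\emph{Case 1: $A \cap \Ter B \neq \varnothing$ and $A \cap B = \varnothing$.} Then Constraint (C2) directly gives $A \prec B$, so $A$ and $B$ are comparable. Symmetrically if $B \cap \Ter A \neq \varnothing$ and $A \cap B = \varnothing$ we get $B \prec A$.

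\emph{Case 2: $A \cap B \neq \varnothing$.} Then by (C1) we have $A \adj B$ or $B \adj A$, and we are done immediately.

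\emph{Case 3: $A \cap B = \varnothing$, $A \cap \Ter B = \varnothing$, and $B \cap \Ter A = \varnothing$.} This is the delicate case, and I expect it to be the main obstacle: I must derive a contradiction (or squeeze out a $\prec$ relation after all) purely from the existence of the shared point $p$ in $\Ter A \cap \Ter B$. The idea is to use $p$ as a probe. Since $p \in \Ter B$, there is a point $q = (x_1, y_0) \in B$ with $x_1 > x_0$ lying on the same horizontal line. Since $p \in \boxset A$ but $p \notin A$ and $p \notin \Ter A$, the horizontal ray to the right of $p$ meets no point of $A$; combined with $p \in \boxset A$ this forces $\rset A \le x_0$, hence $\rset A < x_1$ wait — more carefully, $p \in \boxset A$ gives $\lset A \le x_0 \le \rset A$, and $p \notin \Ter A$ with $p \notin A$ means there is no $x' > x_0$ with $(x', y_0) \in A$; since $A$ is compact and $\bset A \le y_0 \le \tset A$ is not automatic I would first establish $\bset A \le y_0 \le \tset A$ from $p \in \boxset A$, then use the real-border lemma (\Cref{lem:real-border-lemma}) applied to the horizontal line through $p$ to locate a point of $A$ on that line, and argue it must have first coordinate $\le x_0$. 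I would then play the symmetric game with the point $q \in B$: since $q \in B$ and on the same horizontal line, and $p$ lies strictly to its left with $p \in \boxset A$, I can locate the structure of $\boxset A$ relative to $\boxset B$. The aim is to show that in fact $\boxset A \subseteq \Ter B$ (giving $A \prec B$) or to reach a contradiction with one of the "$=\varnothing$" hypotheses of this case, by showing that the part of $A$ near $p$ actually meets $\Ter B$. The cleanest route is probably: show $p \in \Ter B$ together with $p \in \boxset A$ forces $\boxset A$ to have a point in $\Ter B$, and then bootstrap using compactness and path-connectedness of $A$ plus \Cref{lem:real-border-lemma} to propagate this to all of $\boxset A$, contradicting $A \cap \Ter B = \varnothing$ unless $A \prec B$.

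The subtle point throughout is handling the topological boundary: a shared territory point need not translate into a shared intersection, so the horizontal-line probing argument via \Cref{lem:real-border-lemma}, together with the width/height monotonicity from \Cref{lem:prec-properties}, is what makes the case analysis close. I would organize the final write-up as the four cases above, with Cases 1 and 2 being immediate from the constraints and \Cref{def:constrained-S-graphs}'s relations, and Case 3 carrying the full topological weight.
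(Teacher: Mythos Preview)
Your Cases 1 and 2 are exactly right and match the paper. The problem is Case 3: as written it contains an outright error (you assert ``$p \notin \Ter A$'' while $p \in \Ter A$ is the hypothesis), and more importantly you are reaching for heavy topological tools (\Cref{lem:real-border-lemma}, path-connectedness, propagation to all of $\boxset A$) that are not needed and whose use you do not actually pin down.

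The key observation you are missing is that Case 3 is vacuous, and this follows from a two-line convexity argument. From $p=(x,y)\in\Ter A$ you get $p'=(x',y)\in A$ with $x'>x$; from $p\in\Ter B$ you get $p''=(x'',y)\in B$ with $x''>x$. Since $A\cap B=\varnothing$, we have $x'\neq x''$; say $x'<x''$. Then $p'$ lies on the horizontal segment from $p\in\boxset B$ to $p''\in\boxset B$, so $p'\in\boxset B$ by convexity of the box; and $p'\notin B$ (disjointness) while $(x'',y)\in B$ with $x''>x'$, so $p'\in\Ter B$. Hence $A\cap\Ter B\neq\varnothing$, and (C2) gives $A\prec B$. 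The case $x''<x'$ is symmetric. This is precisely the paper's proof: no border lemma, no height/width monotonicity, no case 3 at all.
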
 
\begin{proof} 
	If $ A \cap B \neq \varnothing $, then by Constraint (C1), either $ A \adj B $ or $ B \adj A $. So, we may assume $ A \cap B = \varnothing $. Choose a point $ p = (x,y) \in \Ter A \cap \Ter B  $. There exists $ x', x'' \in \RR $, both bigger than $ x $, such that $ p'=(x', y) \in A $ and $ p''=(x'', y) \in B $. Since $ A $ and $ B $ are disjoint, $ x' \neq x''$. First, assume that $ x''> x' $. Notice that $ p' \notin B $ and that $ p'$ is on the straight line joining $ p $ and $ p'' $, which are both points in $ \boxset B  $. Therefore, $ p' \in \boxset B $. Consequently, $ p' \in \Ter B $. Therefore $ A \cap \Ter B \neq \varnothing $, and by Constraint (C2), we have $ A \prec B $. Second, assume that $ x''<x $. With a similar argument, we deduce $ B \prec A $.  
\end{proof}

Now we can prove that oriented constraint graphs are abstract Burling graphs. 

\begin{theorem} \label{thm:equalthm-constrained-implies-abstract-Burling} 
	Every oriented constrained graph is an oriented abstract Burling graph and therefore a Burling graph.  
\end{theorem}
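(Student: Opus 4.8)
The goal is to exhibit, for an arbitrary oriented constrained graph $G$ realized by a family $\mathcal F$ of strong Pouna sets satisfying \textup{(C1)--(C5)}, a strict partial order $\prec$ on $V(G)=\mathcal F$ such that $(\mathcal F,\prec,\adj)$ is a Burling set; then \Cref{thm:PTabstract-BG} finishes the job. The natural candidate for $\prec$ is the relation $\prec$ already defined on strong Pouna sets (namely $A\prec B$ iff $\boxset A\subseteq\Ter B$), but one must first check it is a strict partial order \emph{on the family $\mathcal F$}. Irreflexivity is clear (a set cannot lie in its own box minus itself); for transitivity, if $A\prec B\prec C$ then $\boxset A\subseteq\Ter B\subseteq\boxset B\subseteq\Ter C$, and one needs $\boxset A\subseteq\Ter C$ — this should follow because every point of $\boxset A$ lies in $\boxset C\sm C$ (as $\Ter C\subseteq\boxset C\sm C$) and has a point of $C$ to its right (since it has a point of $B\subseteq\Ter C$ to its right, or directly from the territory condition). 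Antisymmetry follows from \Cref{lem:prec-properties}(1): $A\prec B$ forces $\rset A<\rset B$, so $A\prec B$ and $B\prec A$ is impossible.

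\textbf{Key steps.} After establishing that $\prec$ is a strict partial order on $\mathcal F$, I would verify each axiom \textup{(A1)--(A4)} in turn, using \textup{(C1)--(C5)} and \Cref{lem:comparable-Pouna-sets,lem:prec-properties}. For \textup{(A1)}: if $A\prec B$ and $A\prec C$, then $\boxset A\subseteq\Ter B\cap\Ter C$, so in particular $\Ter B\cap\Ter C\neq\varnothing$; by \Cref{lem:comparable-Pouna-sets}, $B$ and $C$ are comparable. If $B\adj C$ or $C\adj B$ held, then $B$ and $C$ intersect and $A\subseteq\boxset A\subseteq\Ter B\cap\Ter C$ would contradict \textup{(C3)}; hence $B\prec C$ or $C\prec B$, as required. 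For \textup{(A2)}: if $A\adj B$ and $A\adj C$ with $B\neq C$, then $A\cap B\neq\varnothing$ and $A\cap C\neq\varnothing$; the defining inequalities of $\adj$ give $\lset B\le\lset A$, $\rset A<\rset B$, $\bset B<\bset A$, $\tset A<\tset B$ and likewise for $C$, so $\boxset A\subseteq\boxset B^\circ$ essentially, and the left edge of $A$ lies in both $\Ter B$ and $\Ter C$; so $\Ter B\cap\Ter C\neq\varnothing$, and as in \textup{(A1)} comparability of $B,C$ plus \textup{(C3)} (applied to the intersecting pair $A,\,\cdot\,$? — more carefully, one argues $B,C$ cannot intersect without violating \textup{(C3)} via a set contained in $\Ter B\cap\Ter C$, but here there may be no such third set, so instead one shows directly that the territory inclusions force $B\prec C$ or $C\prec B$) yields the conclusion. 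For \textup{(A3)}: if $A\adj B$ and $A\prec C$, i.e.\ $\boxset A\subseteq\Ter C$, then since $B$ intersects $A$ and the $\adj$-inequalities control how far $B$ pokes out of $\boxset A$ only on the left/bottom/top, one must show $\boxset B\subseteq\Ter C$; the key observation is that $B$'s extremes beyond $\boxset A$ are still ``covered'' because the left edge of $A$ is in $\Ter B$, forcing $B$ to have points to the right everywhere $\boxset A$ does. For \textup{(A4)}: if $A\adj B$ and $B\prec C$, then $A\cap C\neq\varnothing$ or $A\cap C=\varnothing$; in the latter case $A\cap\Ter C\neq\varnothing$ (since $A$ meets $B\subseteq\Ter C$ in a region that is inside $\boxset C$) gives $A\prec C$ by \textup{(C2)}, and in the former case \textup{(C1)} gives $A\adj C$ or $C\adj A$, where $C\adj A$ must be ruled out by the width/height monotonicity of $\prec$ and $\adj$ (e.g.\ \Cref{lem:prec-properties}(2) gives $\hset B\le\hset C$, while $C\adj A$ would force $\hset C<\hset A<\hset B$ roughly, a contradiction after chasing the inequalities).

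\textbf{Main obstacle.} The delicate part is the bookkeeping of the strict inequalities defining $\adj$ together with the third ``left-edge-in-territory'' condition, especially in \textup{(A2)} and \textup{(A3)}, where I must translate ``$A\adj B$'' into precise containment statements about $\boxset A$ versus $\Ter B$ and then combine two such statements or combine one with a $\prec$-relation. Constraint \textup{(C4)} is presumably exactly what is needed to prevent the ``bad'' orientation in one of these axioms (most likely it is used to rule out a configuration where the derived relation would be $\adj$ both ways or would conflict with $\prec$), so I would keep \textup{(C4)} in reserve for whichever axiom resists the purely geometric argument — I expect it is needed for \textup{(A2)} or for the ``$C\adj A$ is impossible'' sub-case of \textup{(A4)}. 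The no-directed-cycle condition on $\adj$ is immediate since $\adj$ implies a strict drop in, say, $\rset{\cdot} - \rset{\cdot}$ is not available, but rather $A\adj B$ forces $\bset B<\bset A$ and $\tset A<\tset B$, so $\hset B>\hset A$; hence $\adj$ strictly increases height and cannot cycle. Once all four axioms are checked, \Cref{thm:PTabstract-BG} gives that $G$ is a Burling graph, completing the proof.
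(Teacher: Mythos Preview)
Your overall architecture is right: take $\prec$ to be the ``box-in-territory'' relation, check it is a strict partial order, and verify \textup{(A1)}--\textup{(A4)}. Your treatment of the partial order, of acyclicity of $\adj$, of \textup{(A1)}, and of \textup{(A4)} matches the paper (the paper uses $\rset\cdot$ rather than $\hset\cdot$ for acyclicity, and height monotonicity to rule out $C\adj A$ in \textup{(A4)}, exactly as you outline). However, you have the constraint-to-axiom matching wrong in two places, and this is not just bookkeeping.

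For \textup{(A2)} you correctly get $\Ter B\cap\Ter C\neq\varnothing$ from the left edge of $A$, and hence comparability via \Cref{lem:comparable-Pouna-sets}. But then you try \textup{(C3)} and notice it does not apply (there need be no third set inside $\Ter B\cap\Ter C$), and fall back on a vague ``show directly''. The missing observation is that \textup{(C5)} does the job: $A$ intersects both $B$ and $C$, so $B$ and $C$ cannot intersect, and comparability then forces $B\prec C$ or $C\prec B$.

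For \textup{(A3)} your proposed direct geometric argument fails. From $A\adj B$ you get $\bset B<\bset A$ and $\tset A<\tset B$, so $\boxset B$ is strictly \emph{taller} than $\boxset A$; there is no reason the extra vertical extent of $\boxset B$ should still lie in $\Ter C$, and indeed without an extra constraint nothing prevents $B$ from meeting $C$. This is precisely where \textup{(C4)} is used (not in \textup{(A2)} or \textup{(A4)} as you guessed). The paper argues: from $A\adj B$ and $A\prec C$ one gets $\rset B<\rset A<\rset C$; if $B\cap C\neq\varnothing$ then \textup{(C1)} forces $C\adj B$ (the other orientation would give $\rset C<\rset B$), and the triple $A\prec C$, $A\adj B$, $C\adj B$ violates \textup{(C4)}. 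Hence $B\cap C=\varnothing$, and since any point of $A\cap B$ lies in $\Ter C$, \textup{(C2)} yields $B\prec C$.
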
 

\begin{proof} 
	Let $ G $ be an oriented constrained graph. So, $ G $ is the oriented intersection graph of a non-empty and finite family $ \mathcal F $ of strong Pouna sets which satisfies Constraints (C1)-(C5). We prove that $ (\mathcal F, \prec, \adj) $ is a Burling set.
	
	{\noindent \textbf{Calim.} \textit{The relation $\prec$ is a strict partial order.}}
	
	By Lemma~\ref{lem:prec-properties}, if $ A \prec B $, then $ r(A) < r(B) $. This implies that $ \prec $ is antisymmetric. 
	
	Now assume that $ A \prec B $ and $ B \prec C $. So, $ \boxset{A} \subseteq \Ter{B} \subseteq \boxset{B} \subseteq \Ter{C}$. Thus $ A \prec C $. So, $\prec $ is transitive.

	Being antisymmetric and transitive, $\prec$ is a partial order.  
	
	{\noindent \textbf{Calim.} \textit{The relation $\adj$ has no directed cycles.}}
	
	If $  A \adj B $, then by definition, $r(B) < r(A)$. Thus, $ \adj $ cannot have any directed cycles.

	{\noindent \textit{Claim}. \textit{Axiom \emph{(A1)} holds.}}
	
	Let $ A \prec B $ and $ A\prec C $. So, $ A \subseteq \Ter B \cap \Ter C$, and in particular, $ \Ter B \cap \Ter C \neq \varnothing $. So, by Lemma~\ref{lem:comparable-Pouna-sets}, $ B $ and $ C $ are comparable. However, because of Constraint (C3), we have $ B \cap C = \varnothing $. So, either $ B \prec C $ or $ B\prec C $.  
	
	{\noindent \textit{Claim}. \textit{Axiom \emph{(A2)} holds.}}
	
	Let $ A \adj B $ and $ A \adj C $. So, the set $ \{(x,y) \in A: x = \lset A  \} $ is a subset of both $ \Ter B  $ and $ \Ter C $. In particular, $ \Ter B \cap \Ter C \neq \varnothing $, and therefore by Lemma~\ref{lem:comparable-Pouna-sets}, $ B $ and $ C $ are comparable. However, because of Constraint (C5), we have $ B \cap C = \varnothing $.  Therefore, either $ B \prec C $ or $ B\prec C $.

	{\noindent \textit{Claim}. \textit{Axiom \emph{(A3)} holds.}}
	
	Let $ A \adj B $ and $A \prec C $. Hence, by definition, $ \rset B \leq \rset A $, and by Lemma~\ref{lem:prec-properties}, $ \rset A < \rset C $. Consequently, $ \rset B < \rset C$. So, if $ B \cap C \neq 0 $, we must have $ C \adj B $. But then $ A \adj B$, $ C \adj B $, and $ A \prec C $ contradict Constraint (C4). Thus, $B \cap C  = \varnothing $. Now, choose a point $ p $ in $ A \cap B $. Since $ A\subseteq \Ter C $, we have $ p \in \Ter C $. Hence,  $ B  \cap \Ter C \neq \varnothing $. Therefore, by Constraint (C2), we have $ B \prec C $. 
	
	{\noindent \textit{Claim}. \textit{Axiom \emph{(A4)} holds.}}
	
	Let $ A \adj B $ and $ B \prec C $.  So, by definition of $\adj$, we have $ \hset A < \hset B $, and by Lemma~\ref{lem:prec-properties}, we have $ \hset B < \hset C$. So, $ \hset A < \hset C$. Hence, if $ A \cap C  \neq \varnothing $, we have $ A \adj C $. On the other hand, if $  A\cap C = \varnothing$, since $ A \cap B \neq \varnothing $ and $ B \subseteq \Ter C $, we have $ A \cap \Ter C \neq \varnothing $. Therefore, by constraint (C2), $ A \prec C $. 
	
	So, $(\mathcal F, \prec, \adj) $ is a Burling set. 
	Finally, because of Constraint (C1), the oriented abstract Burling graph $ \hat G $ obtained from the Burling set $ (\mathcal F, \prec, \adj) $ is indeed isomorphic to $ G $, the oriented intersection graph of $ \mathcal F $. So, $ G $ is an oriented abstract Burling graph.  
	
	Finally, by \Cref{thm:PTabstract-BG}, the graph $ G $ is also an abstract Burling graph. 
\end{proof}

\subsection{Burling graphs are constrained $ S$-graphs} \label{sec:proof-BG-implies-const.S-graph}

In \cite{pawlik2013general}, Pawlik, Kozik, Krawczyk, Laso\'n, Micek, Trotter, and Walczak introduce, for every Pouna set $ S $, a sequence $ \{\mathcal F_k\}_{k \geq 1}$ of families of transformed copies of $ S $ such that the class generated by the (non-oriented) intersection graphs of $\mathcal F_k$'s is equal to the class of Burling graphs, thus showing that Burling graphs are $ S $-graphs. 

In this section, we repeat the construction from~\cite{pawlik2013general} (in an oriented version and with slightly different terminology and details so it matches our earlier definitions), we show that it satisfies all Constraints (C1)-(C6), which implies that Burling graphs are constrained $ S $-graphs.

We first, however, need some lemmas and some more properties of Pouna sets. 


\medskip

The proofs of Lemmas \ref{lem:crossing-between-two-lines}, \ref{prop:crossing-a-subrectangle}, and \ref{prop:horizont-and-vertical-crossing-intersect} can be found in \Cref{appendix-proofs}.

Let $ R $ be an axis-parallel rectangle.  Let $ A \subseteq \RR^2 $. We say that $ A $ \emph{crosses $ R $}\index{crossing} vertically (resp.\ horizontally) if there exists a  $ \gamma: [0,1] \rightarrow A \cap R $ such that $ \gamma(0) $ and $\gamma(1)$ are respectively on the bottom-side and on the top-side (resp.\ on the left-side and on the right-side) of $ R$.

\begin{lemma} \label{lem:crossing-between-two-lines} 
	Let $ y_0, y_1 \in \RR$ such that $ y_0 \leq y_1 $. For $ i  \in \{0,1\}$, let $ L_i $ denote the line $ y = y_i $ in $\RR^2$. Let $ \gamma:[0,1] \rightarrow \RR^2 $ be a continuous function such that for $ i \in \{0,1\} $, we have $ \gamma(i) \in L_i$. Then, there exist  $x_0, x_1 \in [0,1] $ such that $ x_0 \leq x_1 $ and the following hold: 
	\begin{itemize}
		\item the path $ \gamma' = \gamma|_{[x_0, x_1]} $ is  always between or on the lines $ L_0 $ and $L_1$, i.e.\ $\im{\gamma'} \subseteq \{ (x,y) : y_0 \leq y \leq y_1 \} $,  
		\item for $ i \in \{0,1\} $, we have $ \gamma'(i) \in L_i$.
	\end{itemize}
\end{lemma}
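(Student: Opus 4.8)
The plan is to reduce the statement to the behaviour of the $ y $-coordinate of $ \gamma $. Set $ f = \rho_2 \circ \gamma : [0,1] \to \RR $; this is continuous, with $ f(0) = y_0 $ and $ f(1) = y_1 $. Since the first bullet of the conclusion is precisely $ \im{\gamma'} \subseteq \rho_2^{-1}([y_0,y_1]) $ and the second says that $ \gamma(x_0) \in L_0 $ and $ \gamma(x_1) \in L_1 $, it suffices to find $ x_0 \leq x_1 $ in $ [0,1] $ with $ f(x_0) = y_0 $, $ f(x_1) = y_1 $, and $ y_0 \leq f(t) \leq y_1 $ for all $ t \in [x_0,x_1] $; one then takes $ \gamma' = \gamma|_{[x_0,x_1]} $.

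If $ y_0 = y_1 $, I would simply take $ x_0 = x_1 = 0 $: then $ f(x_0) = f(x_1) = y_0 = y_1 $, the strip $ \{(x,y) : y_0 \le y \le y_1\} $ is the line $ L_0 $, and $ \im{\gamma'} = \{\gamma(0)\} \subseteq L_0 $. So we may assume $ y_0 < y_1 $. I would first define $ x_1 = \min\{ t \in [0,1] : f(t) = y_1 \} $. This set is nonempty (it contains $ 1 $) and closed (it is the preimage of a point under the continuous map $ f $), so the minimum exists. By minimality, $ f(t) \neq y_1 $ for $ t \in [0,x_1) $; combined with $ f(0) = y_0 < y_1 $ and the intermediate value theorem, this forces $ f(t) < y_1 $ on $ [0,x_1) $, hence $ f(t) \le y_1 $ on all of $ [0,x_1] $.

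Next I would define $ x_0 = \max\{ t \in [0,x_1] : f(t) = y_0 \} $; again this set is nonempty ($ 0 $ belongs to it) and closed, so the maximum exists and $ x_0 \le x_1 $. By maximality, $ f(t) \neq y_0 $ on $ (x_0,x_1] $. Moreover, if $ f(t_*) < y_0 $ held for some $ t_* \in (x_0,x_1] $, then since $ f(x_1) = y_1 > y_0 $, the intermediate value theorem applied on $ [t_*,x_1] $ would yield $ s \in (t_*,x_1) $ with $ f(s) = y_0 $, and $ s > x_0 $ would contradict the maximality of $ x_0 $. Hence $ f(t) \ge y_0 $ on $ [x_0,x_1] $. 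Together with $ f(t) \le y_1 $ on $ [x_0,x_1] \subseteq [0,x_1] $, this is exactly the required strip condition, and $ f(x_0) = y_0 $, $ f(x_1) = y_1 $ hold by construction, which finishes the proof.

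I do not anticipate a genuine obstacle: the proof is an intermediate-value-theorem argument. The only points needing a little care are that level sets of a continuous function are closed (so that the extrema defining $ x_0 $ and $ x_1 $ are attained), the order in which the two times are chosen — first the earliest time $ f $ reaches $ y_1 $, then the latest time before that at which $ f $ equals $ y_0 $ — and the separate, trivial treatment of the degenerate case $ y_0 = y_1 $.
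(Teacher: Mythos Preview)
Your proof is correct and follows essentially the same approach as the paper: reduce to the $y$-coordinate $f=\rho_2\circ\gamma$, pick one endpoint as an extremum of a level set, then the other as an extremum restricted to the remaining subinterval, and conclude via the intermediate value theorem. The only cosmetic difference is the order---the paper chooses $x_0=\max f^{-1}(y_0)$ first and then $x_1=\min\{t\in[x_0,1]:f(t)=y_1\}$, whereas you choose $x_1$ first and then $x_0$; both variants work equally well.
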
 

\begin{lemma} \label{prop:crossing-a-subrectangle} 
	Let $ R $ and $ R' $ be two axis-aligned rectangles such that:  
	\begin{itemize} 
		\item $ \lset{R'} \leq \lset{R} \leq \rset{R} \leq \rset{R'}$,  
		\item $\bset{R} \leq \bset{R'} \leq \tset{R'} \leq \tset{R} $. 
	\end{itemize} 
	If a set $ A $ crosses $ R $ vertically, then it crosses $ R'$ vertically as well.  
\end{lemma}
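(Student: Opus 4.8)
\textbf{Proof proposal for Lemma~\ref{prop:crossing-a-subrectangle}.}

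The plan is to reduce the statement to one-dimensional considerations on the two axes separately, using the horizontal-line lemma (\Cref{lem:crossing-between-two-lines}) to cut down the given vertical crossing of $R$ to a subpath that stays in the correct horizontal strip, and then to argue by connectedness that such a subpath must in fact sweep across $R'$ horizontally-in-$x$ while still connecting the bottom and top sides of $R'$.

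First I would take a path $\gamma:[0,1]\to A\cap R$ witnessing that $A$ crosses $R$ vertically, so $\gamma(0)$ lies on the bottom-side of $R$ (hence at height $\bset{R}$) and $\gamma(1)$ lies on the top-side of $R$ (hence at height $\tset{R}$). Since $\bset{R}\le\bset{R'}\le\tset{R'}\le\tset{R}$, the heights $y_0=\bset{R'}$ and $y_1=\tset{R'}$ are attained by $\gamma$ (by the intermediate value theorem applied to $\rho_2\circ\gamma$), and moreover $\gamma$ starts below $L_0$ and ends above $L_1$ where $L_i$ is the line $y=y_i$. Applying \Cref{lem:crossing-between-two-lines} to an appropriate reparametrization of $\gamma$ — more precisely, to the restriction of $\gamma$ between the last time it meets $L_0$ before the first subsequent time it meets $L_1$ — I obtain a subinterval $[x_0,x_1]$ with $\gamma':=\gamma|_{[x_0,x_1]}$ satisfying $\im{\gamma'}\subseteq\{(x,y):\bset{R'}\le y\le\tset{R'}\}$, with $\gamma'(x_0)\in L_0$ and $\gamma'(x_1)\in L_1$.

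Next I claim $\im{\gamma'}\subseteq R'$. We already know the $y$-coordinates are in $[\bset{R'},\tset{R'}]$. For the $x$-coordinates: $\im{\gamma'}\subseteq\im{\gamma}\subseteq R$, so every point of $\gamma'$ has $x$-coordinate in $[\lset{R},\rset{R}]\subseteq[\lset{R'},\rset{R'}]$ by the first hypothesis. Hence $\im{\gamma'}\subseteq [\lset{R'},\rset{R'}]\times[\bset{R'},\tset{R'}]=R'$, and since $\im{\gamma'}\subseteq\im{\gamma}\subseteq A$, we get $\gamma':[x_0,x_1]\to A\cap R'$. Finally $\gamma'(x_0)$ has height $\bset{R'}$ so it lies on the bottom-side of $R'$, and $\gamma'(x_1)$ has height $\tset{R'}$ so it lies on the top-side of $R'$; after reparametrizing $[x_0,x_1]$ affinely to $[0,1]$, this exhibits a vertical crossing of $R'$ by $A$.

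The main obstacle I anticipate is purely a matter of bookkeeping rather than mathematical depth: one must be careful that the subpath extracted via \Cref{lem:crossing-between-two-lines} still genuinely has its endpoints on the bottom- and top-sides of $R'$ (and not merely on the extended lines $L_0,L_1$ outside $R'$), which is why the containment $\im{\gamma'}\subseteq R$ — inherited from $\gamma$ — is doing the essential work, together with the first hypothesis sandwiching $[\lset R,\rset R]$ inside $[\lset{R'},\rset{R'}]$. No genuinely hard step is expected; the lemma is a topological triviality once the one-dimensional reductions are set up correctly.
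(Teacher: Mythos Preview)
Your proposal is correct and follows essentially the same approach as the paper's proof: use the intermediate value theorem on $\rho_2\circ\gamma$ to find parameters where the path hits the heights $\bset{R'}$ and $\tset{R'}$, then invoke \Cref{lem:crossing-between-two-lines} on the restricted path, and finally use the horizontal containment $[\lset R,\rset R]\subseteq[\lset{R'},\rset{R'}]$ to conclude that the resulting subpath lies in $A\cap R'$. The paper's version is terser (it leaves the last containment step implicit), but the argument is the same.
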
 

\begin{lemma} \label{prop:horizont-and-vertical-crossing-intersect} 
	Let $R$ be a rectangle in $\RR^2 $. Let $ \alpha: [0,1] \to R$ and $ \beta:[0,1]\to R $ be two paths joining the bottom side of $ R $ to its top side and the left side of $ R $ to its right side respectively. Then $\im{\alpha} \cap \im{\beta} \neq \varnothing $.  
\end{lemma}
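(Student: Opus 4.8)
The statement to prove is \Cref{prop:horizont-and-vertical-crossing-intersect}: a path $\alpha$ from the bottom to the top of a rectangle $R$ and a path $\beta$ from the left to the right of $R$ must intersect. This is a classical ``crossing lemma'' with the flavor of the Jordan curve theorem / Poincar\'e--Miranda; the plan is to give a clean elementary topological argument rather than invoke heavy machinery. First I would normalize: after an affine transformation (which preserves the combinatorial configuration of sides) assume $R = [0,1]^2$. Write $\alpha(t) = (\alpha_1(t), \alpha_2(t))$ and $\beta(s) = (\beta_1(s), \beta_2(s))$ with $\alpha_2(0) = 0$, $\alpha_2(1) = 1$, $\beta_1(0) = 0$, $\beta_1(1) = 1$, and all coordinates in $[0,1]$.

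The approach I would take is a proof by contradiction using the winding-number / degree argument, or — to keep it genuinely elementary — the following standard trick. Suppose $\im\alpha \cap \im\beta = \varnothing$. Consider the map $F : [0,1]^2 \to \RR^2$ defined by $F(t,s) = \alpha(t) - \beta(s)$. The assumption says $F$ never vanishes, so $G = F/\|F\| : [0,1]^2 \to S^1$ is well defined and continuous. Then I would compute the degree (winding number) of $G$ restricted to the boundary $\partial([0,1]^2)$ in two ways: on one hand it must be $0$ because $G$ extends continuously over the whole square; on the other hand, tracking the four edges of the boundary square using the constraints $\alpha_2(0)=0 \le \beta_2(s)$, $\alpha_2(1)=1 \ge \beta_2(s)$, $\beta_1(0)=0 \le \alpha_1(t)$, $\beta_1(1)=1 \ge \alpha_1(t)$, one sees that on each edge the vector $F$ is confined to a closed half-plane (e.g.\ on the edge $t=0$ we have $\alpha_2(0)-\beta_2(s) \le 0$, so $F$ lies in the lower half-plane), and the four half-planes rotate by a total of one full turn, forcing the winding number to be $\pm 1$ — a contradiction. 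Alternatively, if one prefers to avoid degree theory entirely, I would instead apply \Cref{lem:crossing-between-two-lines} to reduce each path to a ``monotone window'' and then run a one-dimensional intermediate-value argument on the function $s \mapsto \alpha_1(\tau(s)) - \beta_1(s)$ along a suitably chosen parametrization; but the cleanest writeup is the half-plane/winding argument.

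The main obstacle I expect is purely expository: making the ``total rotation by one turn along the boundary'' rigorous without dragging in the full apparatus of algebraic topology. The key lemma needed is that a continuous map $S^1 \to S^1$ that extends over the disk has degree $0$, together with the additivity of winding number along a concatenation of arcs and the fact that an arc staying in a fixed open half-plane contributes winding number in $(-1/2, 1/2)$. With those in hand the four-edge bookkeeping is routine: the four edges contribute four ``quarter turns'' all in the same sense, summing to $\pm 1 \ne 0$. Since \Cref{prop:horizont-and-vertical-crossing-intersect} is listed among the lemmas whose proofs are deferred to \Cref{appendix-proofs}, I would present exactly this argument there, keeping it self-contained modulo the single black-box fact about degrees of maps $S^1 \to S^1$, which is standard (see \cite{MunkresTopology}).
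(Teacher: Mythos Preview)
Your winding-number argument is correct and is in fact the standard proof of this crossing lemma. The paper, however, takes a different route: it assumes $\im\alpha\cap\im\beta=\varnothing$, places the four endpoints $a_0,a_1,b_0,b_1$ on the four sides of $R$, joins consecutive pairs by four auxiliary paths $\gamma_1,\dots,\gamma_4$ running just outside $R$, then adds a fifth point $c$ further outside together with four paths $\delta_1,\dots,\delta_4$ from $c$ to each $a_i,b_i$. The resulting configuration is a continuous drawing of $K_5$ in the plane in which every two non-adjacent edges have disjoint images, and the paper derives a contradiction from the Flores--Van~Kampen theorem (\Cref{thm:flores-van-kampen}, \Cref{cor:cor-of-flores-van-kampen}), which says exactly that no such drawing exists.

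So both proofs ultimately rest on a single topological black box, but different ones: yours on the fact that a map $S^1\to S^1$ extending over the disk has degree~$0$, the paper's on the non-embeddability of $K_5$ for continuous (not just injective) maps. Your approach is more direct and closer to what one finds in textbook treatments of the crossing lemma; the paper's approach trades a slightly heavier citation for a proof with no computation at all once the auxiliary paths are drawn. One small remark on your writeup: the half-planes you obtain on each edge of $\partial([0,1]^2)$ are \emph{closed}, not open, so phrase the ``quarter-turn'' bookkeeping in terms of closed half-circles and closed quadrants at the corners; the argument goes through unchanged since $F\neq 0$ rules out the origin. Your aside about an IVT-only alternative via \Cref{lem:crossing-between-two-lines} is less convincing --- the well-known pitfalls of purely order-theoretic proofs of this lemma suggest you should stick with the degree argument.
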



The following lemma shows that strong Pouna sets and their territories behave well under positive transformations. For some properties that are easy to check, we have provided the proof in \Cref{appendix-proofs}.

\begin{property} \label{prop:ter(T)=T(ter)} Let $ S $ be a strong Pouna set and $ T $ be a positive transformation. Then, $ \Ter{T(S)} = T(\Ter{S})$. In particular, $ T(S) $ is strong.  
\end{property} 

The proof is in \Cref{appendix-proofs}. 


Let $ B $ and $ E $ be two rectangles such that $ E \subseteq R $. The right-extension\index{right extension} of $ E $ in $ R $ is the rectangle $ E_r $ defined as follows: $$ E_r = [\rset{E}, \rset{R}] \times [\bset{E}, \tset{E}]. $$ See \Cref{fig:right-extension}.

\begin{figure} 
	\centering 
	\vspace*{-1cm} 
	\includegraphics[width=6cm]{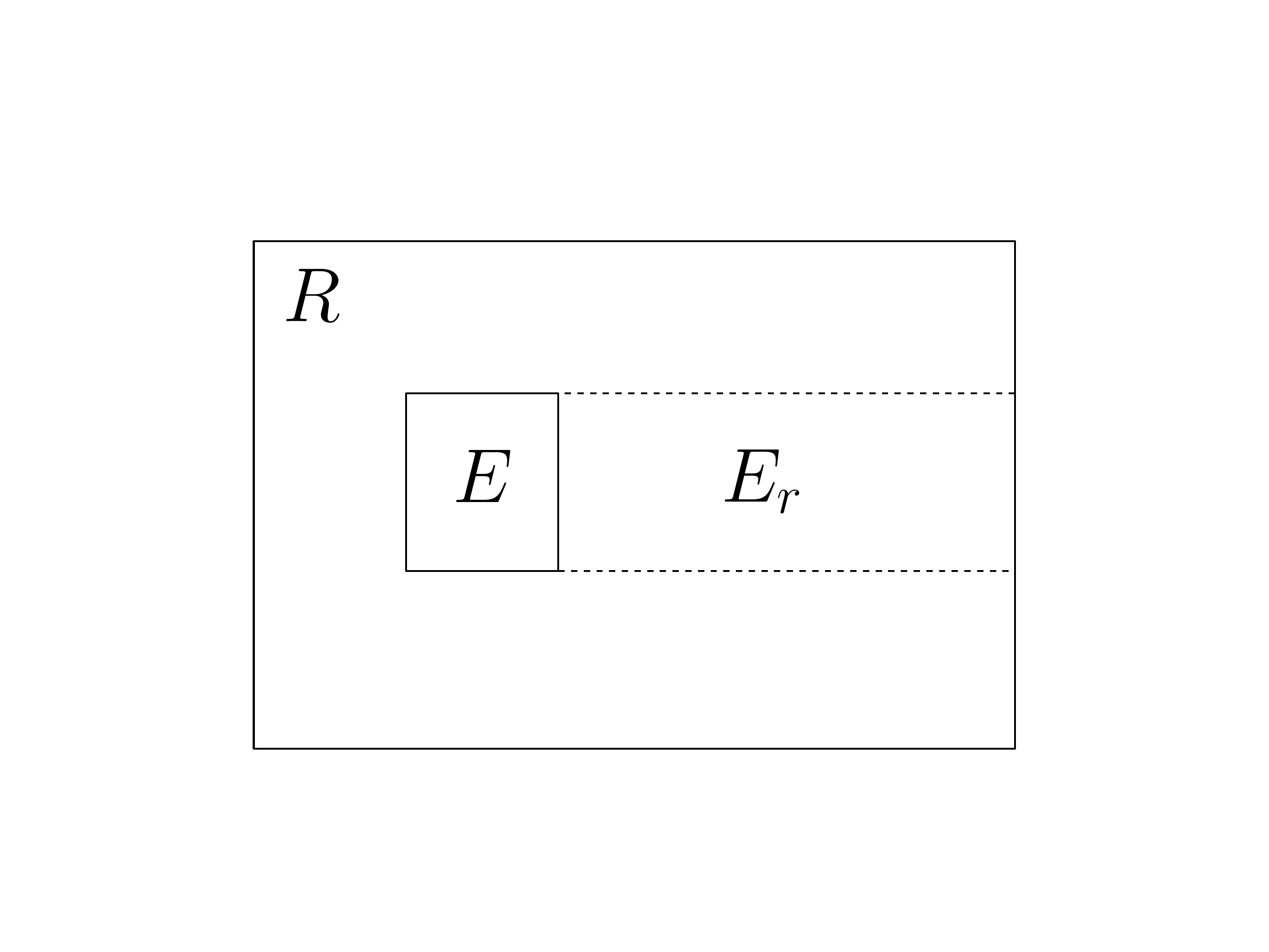} 
	\vspace*{-1cm} \caption{$E_r $ is the right extension of $ E $ in $ R $.} 
	\label{fig:right-extension} 
\end{figure}

\begin{definition}
	A \emph{subterritory}\index{subterritory} for a strong Pouna set $ S $ is a non-empty closed rectangle $ E $ such that 
	\begin{enumerate} 
		\item $ E \subseteq \Ter{S} $, 
		\item $ \lset{E} > \lset{S} $, $ \rset{E} < \rset{S}$, $\bset{E} > \bset{S} $, and $ \tset{E} < \tset{S}$, 
		\item $ S $ crosses the right extension of $ E $ vertically.  
	\end{enumerate}
\end{definition}

A strong Pouna set always has a subterritory, as we prove in the following lemma. 

\begin{lemma} \label{lem:subterritory-exists} 
	Every strong Pouna set has a subterritory. 
\end{lemma}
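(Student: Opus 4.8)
The plan is to start from a point $p_0 = (x_0, y_0) \in \Ter{S}$, which exists since $S$ is strong, and produce a small closed rectangle $E$ around a suitable interior point of $\Ter S$ satisfying the three conditions of a subterritory. First I would argue that $\Ter S$ has nonempty interior: by definition of $\Ter S$ there is a point $(x',y_0)\in S$ with $x' > x_0$, and since $S$ is compact we may take $x'$ to be the largest such, i.e.\ $x' = \max\{x : (x,y_0)\in S,\ x>x_0\}$ is attained and $x'\le \rset S$. Intuitively, just to the left of the rightmost point of $S$ on the line $y=y_0$ there is room; more carefully, one shows that the set of points of $\boxset S \sm S$ lying on a horizontal segment that ends in $S$ on its right is open ``from below'' in a suitable sense. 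Rather than fuss with a single horizontal line, the cleaner route is: pick $p_0\in\Ter S$, let $x' = \sup\{x : (x,y_0)\in S\}$ (so $x'\le\rset S$ and $(x',y_0)\in S$ by compactness), and note $x_0 < x' < \rset S$ is impossible to guarantee directly — instead choose $p_0$ so that $x_0$ is strictly between $\lset S$ and $\rset S$ and $y_0$ strictly between $\bset S$ and $\tset S$; this is possible because $\Ter S \subseteq \boxset S \sm S$ and if every point of $\Ter S$ lay on $\partial\boxset S$ then $\Ter S$ would be contained in finitely many line segments, and one checks (using Property~\ref{prop:int-box-min-S-non-empty} and the path-connectedness of $S$, much as in the proof of Lemma~\ref{prop:strong-perkins-S-or-horizontal-reflection}) that $\Ter S$ actually meets $\boxset S^\circ$.

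Having fixed such a $p_0=(x_0,y_0)\in\Ter S\cap\boxset S^\circ$, I would choose $\varepsilon>0$ small enough that the closed square $Q = [x_0-\varepsilon, x_0+\varepsilon]\times[y_0-\varepsilon,y_0+\varepsilon]$ is disjoint from $S$ (possible since $S$ is closed and $p_0\notin S$) and contained in $\boxset S^\circ$ with $\lset S < x_0-\varepsilon$, $x_0+\varepsilon<\rset S$, $\bset S< y_0-\varepsilon$, $y_0+\varepsilon<\tset S$. Then every point $(x,y)\in Q$ satisfies $y_0-\varepsilon\le y\le y_0+\varepsilon$ and lies in $\boxset S\sm S$; the subtle point is to verify $Q\subseteq\Ter S$, i.e.\ that for each such $(x,y)$ there is $x''>x$ with $(x'',y)\in S$. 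For this I would use Lemma~\ref{lem:real-border-lemma} / the crossing lemmas: since $p_0\in\Ter S$, there is a point of $S$ to the right of $p_0$ on the line $y=y_0$; combining this with points of $S$ on the top and bottom sides of $\boxset S$ and the path-connectedness of $S$, one gets a sub-path of $S$ joining the top and bottom sides of $\boxset S$ that passes to the right of the whole square $Q$ (after shrinking $\varepsilon$), and Lemma~\ref{lem:crossing-between-two-lines} then guarantees that for each horizontal line $y=c$ with $|c-y_0|\le\varepsilon$ this path has a point with $x$-coordinate exceeding $x_0+\varepsilon\ge x$, giving $(x,y)\in\Ter S$. This establishes conditions (1) and (2) for $E := Q$.

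For condition (3), that $S$ crosses the right extension $E_r = [\rset E, \rset S]\times[\bset E,\tset E]$ vertically, I would invoke Lemma~\ref{prop:crossing-a-subrectangle}: the path of $S$ constructed above joins the top and bottom sides of $\boxset S$ and, after restricting via Lemma~\ref{lem:crossing-between-two-lines} to the horizontal strip $\bset E\le y\le\tset E$, yields a vertical crossing of a tall thin rectangle whose $x$-range lies within $[\rset E,\rset S]$; since that rectangle is contained in $E_r$ with the inclusion pattern required by Lemma~\ref{prop:crossing-a-subrectangle}, $S$ crosses $E_r$ vertically. The main obstacle I anticipate is the first step — rigorously producing a point of $\Ter S$ in $\boxset S^\circ$ and, more importantly, a genuine connected sub-path of $S$ running vertically to the right of a whole neighbourhood of that point — since this is where compactness, path-connectedness, and the precise definition of $\Ter S$ must be combined carefully; once that ``witnessing arc'' is in hand, conditions (1)–(3) follow routinely from the crossing lemmas already proved in the appendix.
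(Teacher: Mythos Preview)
Your overall strategy is close to the paper's, but the plan contains a real gap at exactly the step you flagged as the ``main obstacle''. You propose to take $E=Q=[x_0-\varepsilon,x_0+\varepsilon]\times[y_0-\varepsilon,y_0+\varepsilon]$ for a point $p_0=(x_0,y_0)\in\Ter S\cap\boxset S^\circ$ and small $\varepsilon$, and to deduce condition~(3) from ``a sub-path of $S$ joining the top and bottom sides of $\boxset S$ that passes to the right of the whole square $Q$''. This can simply fail, no matter how small $\varepsilon$ is. Take
\[
S=\{0\}\times[0,10]\ \cup\ [0,10]\times\{0\}\ \cup\ [0,10]\times\{10\}\ \cup\ \{5\}\times[5,10]\ \cup\ \{8\}\times[0,5],
\]
a $\sqsubset$-shape with a spike down from the top at $x=5$ and a spike up from the bottom at $x=8$, both terminating at height $5$. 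Then $S$ is a strong Pouna set, $p_0=(3,5)\in\Ter S\cap\boxset S^\circ$, and for every sufficiently small $\varepsilon>0$ the square $Q$ is disjoint from $S$, contained in $\boxset S^\circ$, and satisfies $Q\subseteq\Ter S$ (each height $y\in[5-\varepsilon,5+\varepsilon]$ is witnessed by one of the two spikes). Yet
\[
S\cap E_r \;=\; \{5\}\times[5,5+\varepsilon]\ \cup\ \{8\}\times[5-\varepsilon,5],
\]
two disjoint vertical segments that do not together form a path from the bottom side of $E_r$ to its top side. So $S$ does \emph{not} cross $E_r$ vertically, and $E=Q$ is not a subterritory. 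The intuition ``a path in $S$ from top to bottom must pass to the right of $Q$'' breaks because the only top-to-bottom route in $S$ runs along the \emph{left} side of $\boxset S$; the witnesses for $\Ter S$ to the right of $Q$ come from two unrelated pieces of $S$.

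The paper sidesteps this by \emph{not} fixing the height of $E$ in advance. It first locates a point $s\in S$ to the right of $p$, takes a single path in $S$ from $s$ toward the top of $\boxset S$, and uses the boundary-crossing lemma on a small rectangle around $s$ to extract an honest vertical sub-path $\delta'$ of $S$ lying in some sub-strip $[y_0,y_1]\subseteq[y_p-\varepsilon,y_p+\varepsilon]$ and entirely to the right of the disc $D(p,\varepsilon)$. Only then is $E$ chosen, with $y$-bounds tailored to $[y_0,y_1]$, so that $\delta'$ witnesses the vertical crossing of $E_r$ directly via Lemma~\ref{prop:crossing-a-subrectangle}. The missing idea in your plan is precisely this: produce the crossing first and let it dictate the height of $E$, rather than fixing $E=Q$ and hoping a crossing of that particular height exists.
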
 

\begin{proof} 
	Let $ S $ be a strong Pouna set and let $ B = \boxset{S} $. 
	By \Cref{prop:int-box-min-S-non-empty} there exist a point $ p = (x_p,y_p) \in B^\circ \sm S $. So, there is $ \epsilon > 0 $ such that $ D(p,\epsilon) \subseteq B^\circ \sm S $.  
	
	Let $L_P $ be the ray $\{ (x,y) : y = y_p, x \geq x_p \}$. Notice that $ L_P \cap S $ is non-empty and compact. Let $ s = (x_s, y_s)$ be the point in $ L_P \cap S $ which obtains the value $ \lset{L_P \cap S} $. Notice that $ y_s = y_p$. Consider the following rectangle in $ B $: $$ R = [x_S - \epsilon/2, \rset{S}] \times [y_s - \epsilon , y_s + \epsilon]. $$
	See \Cref{fig:ptoof-subter-exists}. 
	
	\begin{figure} 
		\centering 
		\vspace*{-1cm} 
		\includegraphics[width=9cm]{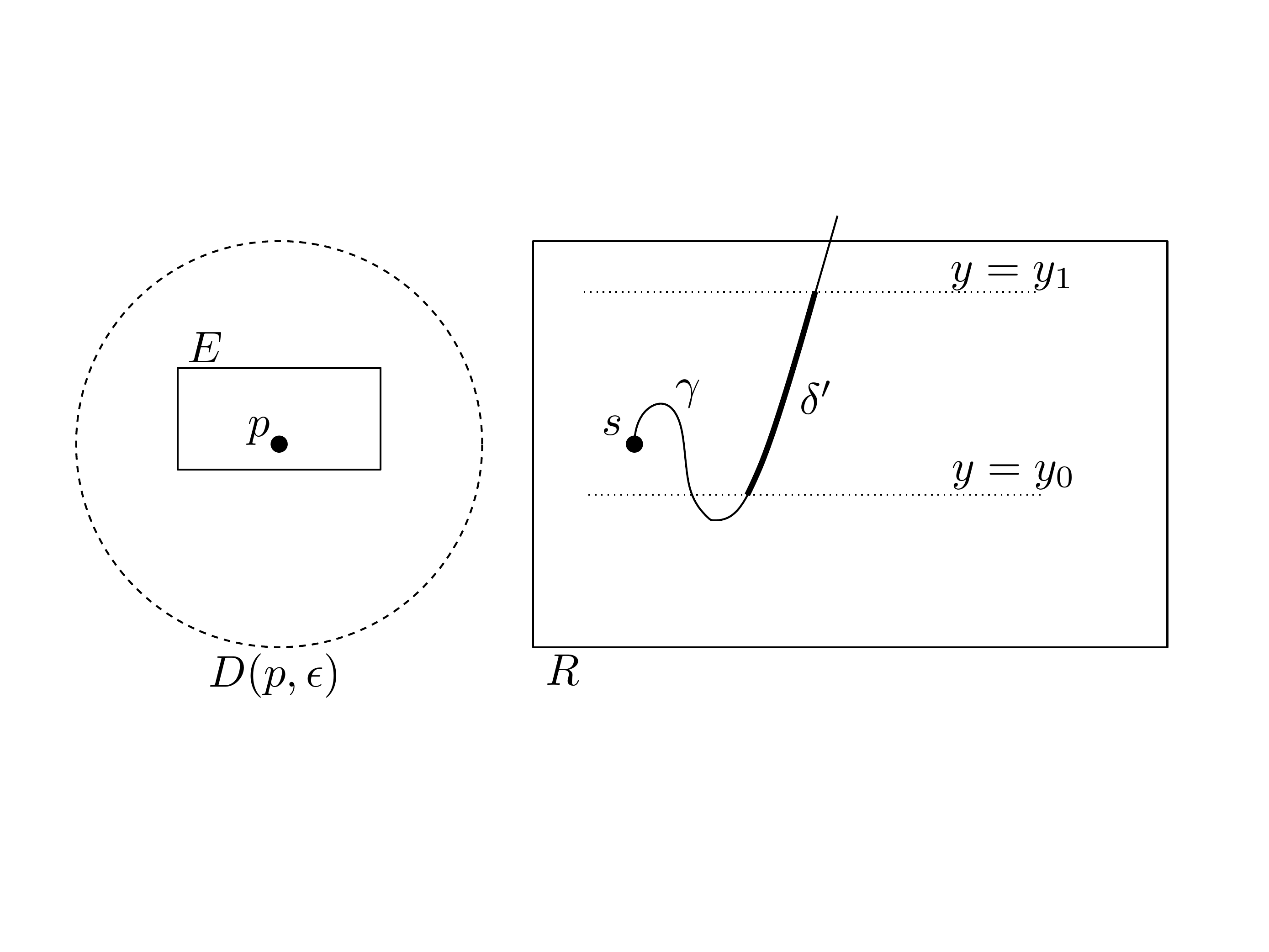} 
		\vspace*{-2cm} 
		\caption{For proof of Lemma~\ref{lem:subterritory-exists}.} \label{fig:ptoof-subter-exists} 
	\end{figure}
	
	In particular, $ s \in R^\circ $. Also, $ R = \bar{R} $ does not intersect the border of $ B $. On the other hand, there is a point $ s' $ of $ S $ on the top-side of $ B $. Since $ S $ is a path-connected set, we must have a path $\gamma$ from $ s $ to $ s' $. By \Cref{lem:real-border-lemma}, the image of $\gamma$ must intersect $\partial B $, and in particular in a point other than $(x_S - \epsilon/2, y_s)$ and $ (\rset{S}, y_s)$. So, $ \im{\gamma} \cap B $ is not a horizontal line. In particular, there are $ y_0, y_1 \in \RR $ such that $ y_s - \epsilon \leq y_0 < y_2 \leq y_s + \epsilon$ and such that there is a path $\delta$ in $ R $ joining a point on the line $ y = y_0 $ to a point on the line $ y = y_1 $.
	
	So, by \Cref{lem:crossing-between-two-lines}, applied to  $\delta $, there is a path $\delta': [0,1] \rightarrow \RR$ such that $ \pi_2(\delta(0)) = y_0 $, $ \pi_2(\delta(1)) = y_1 $, and $ \im{\delta'} \subseteq [x_S - \epsilon/2, \rset{S}] \times [y_0 , y_1]$.
	
	Now, let $ E $ be a rectangle entirely inside $ D(p,\epsilon)$ defined as follows: $$ E = [x_p - \epsilon/2, x_p + \epsilon/2] \times [(y_p+y_0)/2, (y_p+y_1)/2]. $$
	
	Notice that by \Cref{prop:crossing-a-subrectangle}, $ \delta' $ crosses the right extension of $ E $ vertically. Clearly, $ E$ satisfies all other properties of subterritory as well. So, $ E $ is a subterritory of $ S $. 
\end{proof}

The next property, whose proof is in \Cref{appendix-proofs}, shows that subterritories behave well under positive transformations.

\begin{property} \label{prop:T(E)-subter-of-T(S)} If $ E $ is a subterritory of a strong Pouna set $ S $, then for every positive transformation $ T $, we have that $ T(E) $ is a subterritory of $ T(S) $.  
\end{property}

Finally, the following property states that positive transformations preserve the Constraints \textup{(C1)-(C6)} of the definition of constrained $ S$-graphs.

\begin{property} \label{prop:T(F)-satisfies-C1-C5} Let $ S $ be a strong Pouna set, and let $ \mathcal F $ be a finite family of transformed copies of $ S $ satisfying Constraints \emph{(C1)-(C6)}, then for every positive transformation $ T $ the family $ \{T(S): S \in \mathcal F\} $ also satisfies \emph{(C1)-(C6)}. \end{property}

Again, the proof is in \Cref{appendix-proofs}.

\subsection*{Construction of Pawlik, Kozik, Krawczyk, Laso\'n, Micek, Trotter, and Walczak} 

Let us restate the construction of \cite{pawlik2013general} in a slightly different manner.

Let $ S $ be a Pouna set, and let $ \mathcal F $ be a finite family of transformed copies of $ S $. Set $ \mathfrak{B} = \boxset {\mathcal F} $.

A \emph{prob}\index{prob} for $ \mathcal F $ is a closed rectangle $ P$ such that: $P \subseteq \mathfrak{B} $ and $ \rset{P} = \rset{\mathfrak{B}}$.  We denote the set $ \{A \in \mathcal F: A \cap P \neq \varnothing \} $ by $ N_{\mathcal F}(P) $, or $ N(P) $ if there is no confusion. 

Let $ P $ be a prob for $ \mathcal F $. A \emph{root}\index{prob!root of} of $P $ is a rectangle of the form $ \{(x,y) \in P: x \leq x_0 \} $, for some $ x_0 \in (\lset P, \rset P) $, which does not intersect any element of  $\mathcal F $. Notice that not every prob has a root, and that when a prob has a root, it has infinitely many roots. Moreover, the roots of a prob form a totally ordered set with inclusion. 

The prob $ P$ is said to be \emph{stable}\index{prob!stable} if: 
\begin{enumerate} 
	\item $P$ has a root, and there exists a root $ R $ of $ P$ such that for every $ A \in N(P)$, we have $ R \subseteq \Ter{A}$, 
	\item the elements of $ N(P) $ are mutually disjoint, 
	\item for every $ A \in N(P)$, we have $ \bset{A} < \bset{P} $ and $ \tset{P} < \tset{A}$, 
	\item every $ A \in N(P)$ crosses $ P$. 
\end{enumerate}

\begin{remark} It is worth mentioning that the fourth item in the definition of stable prob does not follow from the three other items. In \Cref{fig:4th-item-stable-prob}, a prob $P$ with a root $ R $ and $N(P) = \{A\} $ are shown. All items 1-3 of the definition hold here, but not item 4.  
\end{remark}

\begin{figure} 
	\centering \vspace*{-1.7cm} 
	\includegraphics[width=8.2cm]{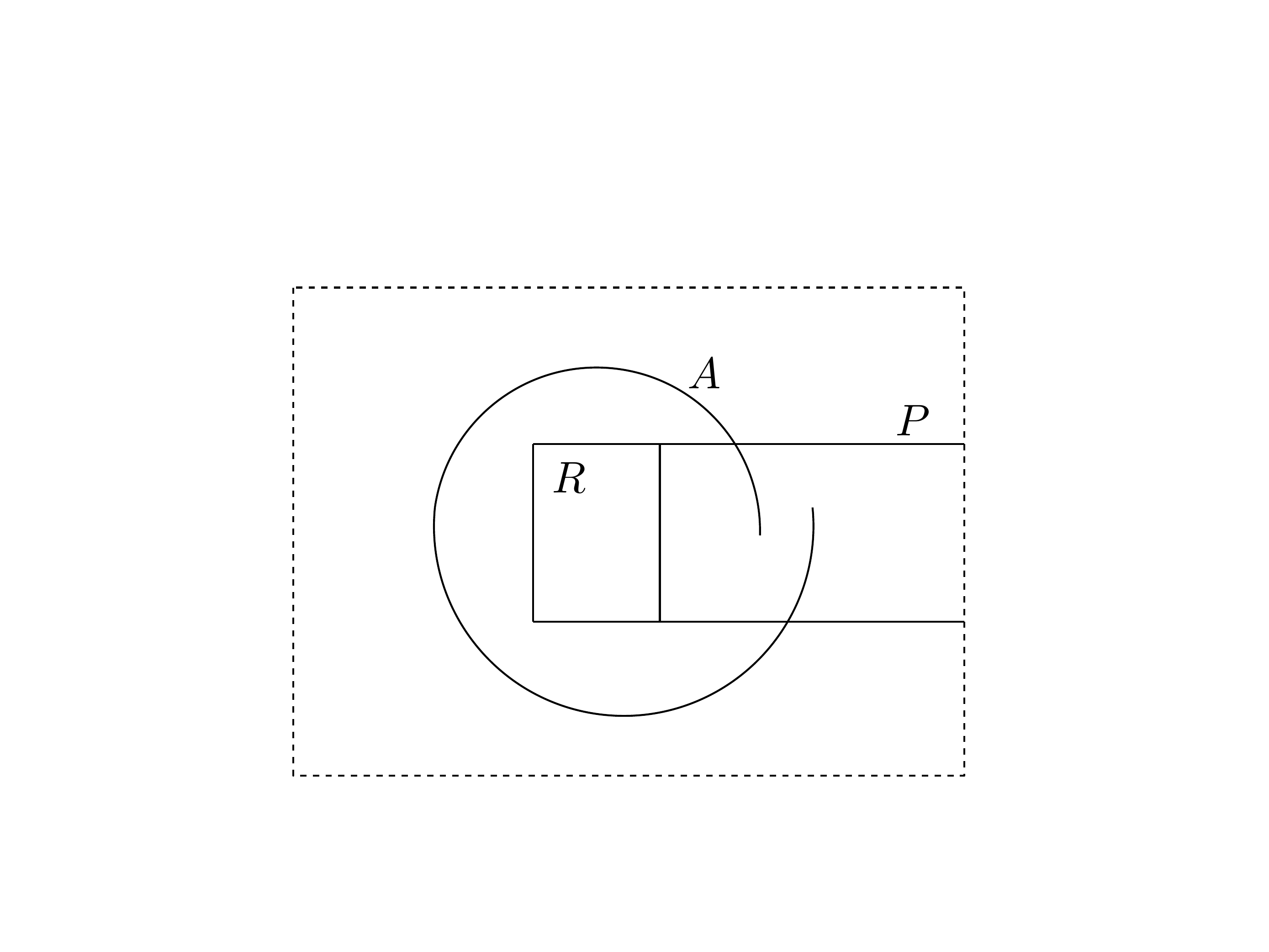} 
	\vspace*{-1.3cm} 
	\caption{The fourth item in the definition of stable prob does not hold here.} \label{fig:4th-item-stable-prob} 
\end{figure}

What we call a stable prob in this article is similar to what is called a \emph{prob} in~\cite{pawlik2013general}.

\begin{property} \label{prop:every-root-stable} 
	Let $ P$ be a stable prob for a family $\mathcal F $ of Pouna sets. Then, for every root $ R $ of $P $ and for every $ A \in N(P)$, we have $ R \subseteq \Ter A $.  
\end{property} 
\begin{proof} Let $ R_0 $ be the root in the definition of stable prob. If $ R \subseteq R_0 $, the result is obvious. If not, let $ p = (x,y) \in R \sm R_0 $. So, there exist $ x_0 < x$ such that $ p_0 = (x_0, y) \in R_0 $ So, in particular $p_0 \in \Ter{A} $. Also, $ p_0 \in R $, because $ R_0 \subseteq R $. So, there exists $ x' > x_0 $ such that $ p' = (x', y) \in A $. Since $ p' \in A $, we have $ p' \notin R $. So, in particular, $ x' \neq x $.  If $ x' < x $, then $ x' $ is on the strait line joining $ p_0 $ and $ p $. But $ p_0, p \in R $ and $ R $ is convex, so $ (x',y) \in R $, a contradiction. Hence $ x'> x $. Now, to show that $ p\in \Ter{A}$, it is enough to show that $ p \in \boxset{A} \sm A $. But $ p $ being in  $R $, is not in $ A $. On the other hand, $ p $ is in on the straight line between $ p_0 $ and $ p' $. Now because $ p_0 \in \Ter{A} \subseteq \boxset{A} $ and $ p' \in A \subseteq \boxset{A} $, we have $p \in \boxset{ A }$. This completes the proof.   
\end{proof}

Let $ E $ be a rectangle in $ \boxset{\mathcal F} $. The prob \emph{defined by $ E $}\index{prob!defined by a rectangle} in $ B $ is the prob $ P$ which is obtained by extending the right side of $ E $ to reach the border of $B $, i.e. $ P = \{(x,y) \in B: \lset E \leq x \leq \rset B, \bset E \leq y \leq \tset E \}. $ Notice that if $  E $ does not intersect any member of $ \mathcal F $, then it is a root for $ P $.

From now on, fix a strong Pouna set $ S $ and a subterritory $ E $ of $ S $. Also, from now on, for the transformed copy $ S'=T(S) $, we consider the subterritory $ T(E) $. Refer to \Cref{fig:PKKconstruction} for a visualization of the construction described hereunder, applied to a very simple 1-element family of a set $ S $.

\begin{figure}
	\centering
	\includegraphics[width=7cm]{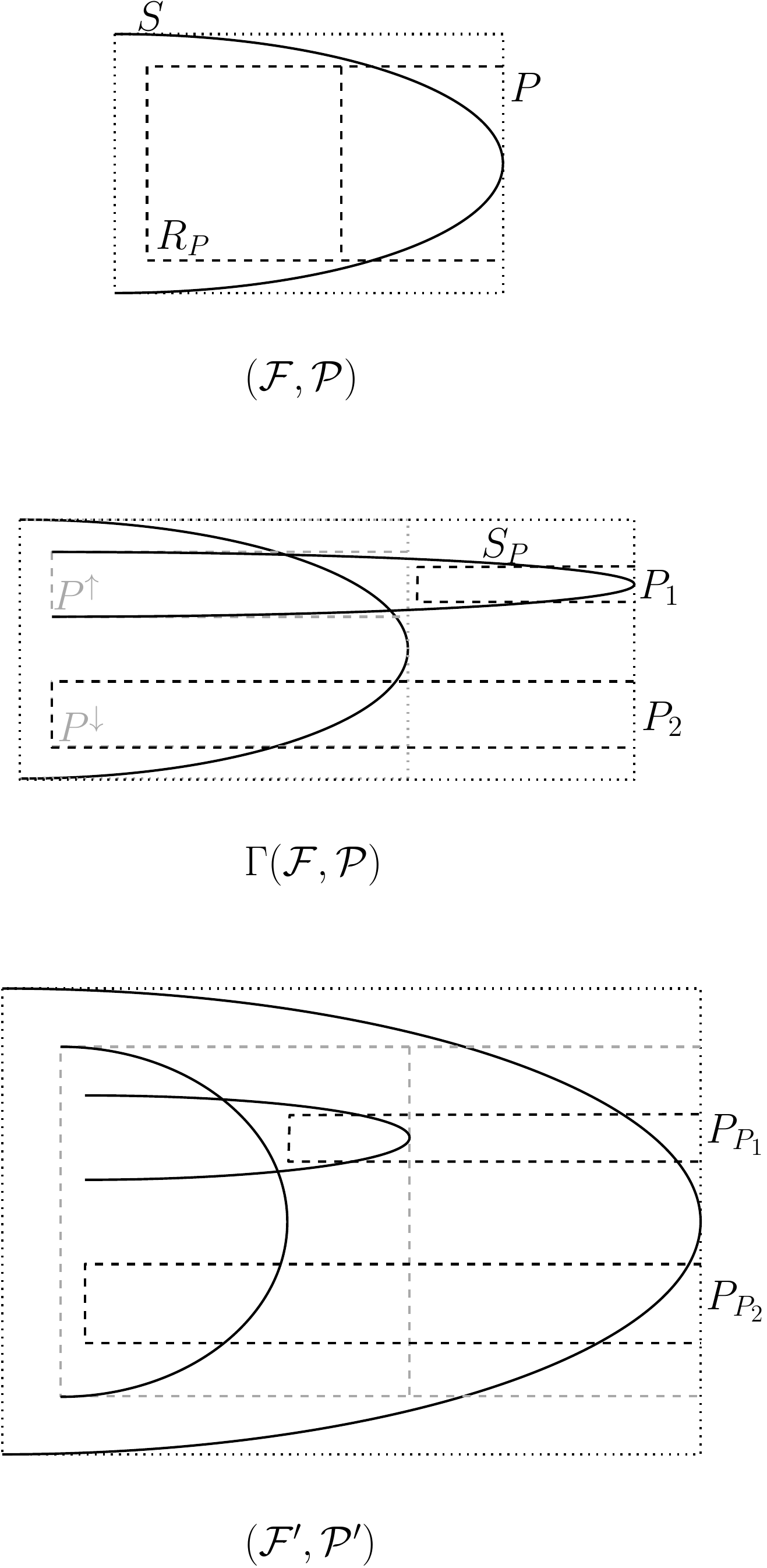}
	\caption{Construction of~\cite{pawlik2013general} applied to $(\mathcal F, \mathcal P)$. The second line presents $\Gamma(\mathcal F, \mathcal P)$ and the third line presents $(\mathcal F', \mathcal P') = \nextF(\mathcal F, \mathcal P)$. The shapes in gray are not parts of the object and are presented to make following the construction easier. The scales have changed to make the illustration clearer.} \label{fig:PKKconstruction}
\end{figure}

	Let $(\mathcal F, \mathcal P) $ be a tuple where $ \mathcal F$ is a family of transformed copies of $ S $ and $ \mathcal P $ is a set of probs of $\mathcal F$. We define an operation $\Gamma$ where $ (\mathcal F', \mathcal P') = \Gamma(\mathcal F, \mathcal P) $ is obtained as follows: 
	\begin{enumerate} 
		
		\item[\textbf{\textup{(S$'$1)}}] For every $ P \in \mathcal P $, let $P^\uparrow $ and $ P^{\downarrow} $ be respectively the top one-third and the bottom one-third of $ P $, i.e. $$ P^\uparrow = [\lset P, \rset P] \times [\frac{\bset P + 2\tset P}{3}, \tset P] $$ and $$ P^\downarrow = [\lset P, \rset P] \times [\bset P, \frac{2\bset P+ \tset P}{3}]. $$ 
		
		\item[\textbf{\textup{(S$'$2)}}] Set $ S_P $ to be a transformed copy of $ S $ where we first match the boundary of $ \boxset S $ on the boundary of $ P^\uparrow $, and then we scale it horizontally by $ \frac{2 \wset{S}}{\lset E - \lset S} $ keeping the left-side of $\boxset S $ fixed. Formally, the transformation described above is $ T_P = T_2 \circ T_1 : \RR^2 \rightarrow \RR^2 $, where $$ T_1(x,y) =  \Big(  \frac{\wset{P^\uparrow}}{\wset{S}}x + \lset{P^\uparrow} - \frac{\lset{S}\wset{P^\uparrow}}{\wset{S}} ,  \frac{\hset{P^\uparrow}}{\hset{S}}y + \bset{P^\uparrow} - \frac{\bset{S}\hset{P^\uparrow}}{\hset{S}} \Big) $$ and  $$ T_2(x,y) =  \Big( \frac{2 \wset S}{\lset E - \lset S}x + \lset{P^\uparrow}(1 - \frac{2 \wset{S}}{\lset E - \lset S}) , y \Big). $$ This transformation ensures that the subterritory of $ S_P$, i.e.\ $ T_P(E) $, is outside~$ \boxset{\mathcal F} $ (See \Cref{prop:prop-after-def-Gamma}). Denote $ T_P(E) $ by $ E_P$. 
		
		\item[\textbf{\textup{(S$'$3)}}] Set $ \mathcal F' = \mathcal F \cup \big( \cup_{P \in \mathcal P} S_P  \big) $.  
		
		\item[\textbf{\textup{(S$'$4)}}] For $ P \in \mathcal P $, denote by $ P_1 $ the prob for $\mathcal{F'}$ defined by $ E_P $, and denote by $P_2 $ the prob for $ \mathcal{F'} $ defined by $ P^\downarrow $. 
		
		\item[\textbf{\textup{(S$'$5)}}] Set $ \mathcal P' = \{ P_1, P_2 : P \in \mathcal P \}$. 
	\end{enumerate} 

\begin{definition} \label{def:const-pawlik-to-walzack}
	Let $ S $ be a strong Pouna set. Let $\mathcal F $ be a family of positive transformed copies of $ S $, and let $ \mathcal P $ be a set of its probs. We define $\nextF(\mathcal F, \mathcal P )$ as follows.
	\begin{enumerate} 
		\item[\textbf{\textup{(S1)}}] Set $ (\mathcal F_0, \mathcal P_0) = \Gamma(\mathcal F, \mathcal P)$. 
		
		\item[\textbf{\textup{(S2)}}] For every $ P \in \mathcal P $, choose a root $ R_P$. (To see that $P$ has a root, see~\cite{pawlik2013general} or \Cref{thm:equalthm-Burling-is-constraintS}.) Create a transformed copy $ (\mathcal F^P, \mathcal P^P)$ of $(\mathcal F_0, \mathcal P_0)$ such that $ \boxset{\mathcal F^P} $ is matched to $ R_P $. Formally, apply the transformation: $$ T'_P(x,y) = \Big( \frac{\wset{R_P}}{\wset{B_P}}x + \lset{R_P} - \frac{\lset{B_P}\wset{R_P}}{\wset{B_P}} , \frac{\hset{R_P}}{\hset{B_P}}y + \bset{R_P} - \frac{\bset{B_P}\hset{R_P}}{\hset{B_P}} \Big), $$ where $ B_P = \boxset{\mathcal F^P} $. 
		
		\item[\textbf{\textup{(S3)}}] Set $ \mathcal F' = \mathcal F \cup \big(\cup_{P \in \mathcal P}  \mathcal F^P \big) $. 
		
		\item[\textbf{\textup{(S4)}}] Now, for $ P \in \mathcal P $ and for $ Q \in \mathcal P^P $, let $ P_Q $ be the prob for $ \mathcal F$ defined by $ Q $.  
		
		\item[\textbf{\textup{(S5)}}] Set $ \mathcal P' = \{P_Q: P \in \mathcal P, Q \in \mathcal P^P \}. $
	\end{enumerate}
	
	The tuple $(\mathcal F', \mathcal P')$ is $ \nextF(\mathcal F, \mathcal P)$. 
\end{definition}

Now, we can define a sequence $\{(\mathcal F_k, \mathcal P_k)\}_{k \geq 1} $ from~\cite{pawlik2013general}, where $ \mathcal F_k $ is a family of positive transformed copies of $ S $, and $ \mathcal P_k $ is a set of probs for $ \mathcal F_k$. 

\begin{definition}[The construction from~\cite{pawlik2013general}] \label{def:the-definition-of-geometric-sequence}
	For $ k =1 $, set $ \mathcal F_1 = \{S\} $ and $ \mathcal P_1 = \{P\} $ where $ P$ is the prob defined by $ E $. For every $ k \geq 1 $, define $(\mathcal F_{k+1}, \mathcal P_{k+1}) = \nextF(\mathcal F_k,\mathcal P_k)$.
\end{definition}

Let us state some properties about the construction of Pawlik, Kozik, Krawczyk, Laso\'n, Micek, Trotter, and Walczak. The proof of Properties \ref{prop:prop-after-def-Gamma} and \ref{prop:more-on-Gamma-and-disjoint-probs} can be found in \Cref{appendix-proofs}.

\begin{property} \label{prop:prop-after-def-Gamma} Adopting the notation from the definition of $ \Gamma $, for every $ P \in \mathcal P $, we have:  
	\begin{enumerate} 
		\item the transformation $T_P $ is positive.   
		\item $ \lset{E_P} > \rset{\boxset{F}} $, so in particular, $ E_P \cap \boxset{F} = \varnothing $. 
	\end{enumerate} 
\end{property}

\begin{property} \label{prop:more-on-Gamma-and-disjoint-probs} 
	Let $ \mathcal F $ be a family of strong Pouna sets, and let $ \mathcal P $ be a set of probs for $ \mathcal F $ that are mutually disjoint. Setting $(\mathcal F',\mathcal P') = \Gamma(\mathcal F, \mathcal P)$ and adopting the notation from the definition of $\Gamma$, we have that for every $ P \in \mathcal P $: 
	\begin{enumerate} 
		\item if $Q \in \mathcal P \sm \{P\}$, then $S_P \cap Q = \varnothing $, $ S_P \cap S_Q = \varnothing $, and $ \Ter{S_P} \cap Q = \varnothing $, 
		\item $N_{\mathcal F'}(P_1) = \{S_P\} $,
		\item $N_{\mathcal F'}(P_2) \subseteq N_{\mathcal F}(P)$ and $ N_{\mathcal F'}(P_2) \subseteq \mathcal F$,
		\item for every $ A  \in \mathcal F' $, we have $ S_P \adj A $ if and only if $ A \in N(P)$, and there exists no $ B \in \mathcal F' $ such that $ B \adj S_P $. 
	\end{enumerate} 
\end{property}

Let $ \mathcal C $ be the class of oriented graphs generated by the intersection graphs of the families $ \mathcal F_{k}$ defined in \Cref{def:the-definition-of-geometric-sequence}. This class is exactly the class of Burling graphs. Indeed, as has been mentioned in \cite{pawlik2013general} (for the non-oriented case) the sequence of the intersection graphs of $ \mathcal F_k$'s is exactly the sequence defined by Burling \cite{Burling1965} in 1965. The proof of this fact is not complicated but is long. As a matter of fact, it is usually stated without a proof in the literature. A sketch of the proof can be found in the Ph.D.\ thesis of the author (see Lemma 5.25 of \cite{PournajafiPhDThesis}). Here, we assume this fact. Therefore, we use the term ``(oriented) Burling graph" for the graph in the class $ \mathcal C $ as well. 

With this assumption, we can prove that for any Pouna set $ S$, an oriented Burling graphs (thus, here, a subgraphs of the oriented intersection graphs of some $ \mathcal F_k$) is a constrained $ S $-graph. First, we need the following lemma.

\begin{lemma} \label{lem:Gamma-preserves-C1-C6}
	Let $ S $ be a strong Pouna set. Let $ \mathcal F $ be a family of transformed copies of $ S $ that satisfies Constraints \textup{(C1)-(C6)}. Let $ \mathcal P$ be a set of mutually disjoint stable probs of $\mathcal{F}$. If $ (\mathcal F', \mathcal P') = \Gamma(\mathcal F, \mathcal P)$, then  
	\begin{enumerate} 
		\item elements of $P'$ are mutually disjoint, 
		\item every element of $P$ is a stable prob for $ \mathcal F'$, 
		\item $ \mathcal F' $ satisfies Constraints \textup{(C1)-(C6)}. 
	\end{enumerate} 
\end{lemma}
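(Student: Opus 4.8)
The plan is to verify the three claims of \Cref{lem:Gamma-preserves-C1-C6} essentially by unpacking the definition of $\Gamma$ together with the already-established Properties \ref{prop:prop-after-def-Gamma} and \ref{prop:more-on-Gamma-and-disjoint-probs}. Recall $\mathcal F' = \mathcal F \cup \{S_P : P \in \mathcal P\}$ and $\mathcal P' = \{P_1, P_2 : P\in \mathcal P\}$, where $P_1$ is the prob defined by the subterritory $E_P = T_P(E)$ of $S_P$ and $P_2$ is the prob defined by $P^\downarrow$.

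For item (1), I would argue that the four families of probs $\{P_1\}$, $\{P_2\}$ (for the various $P\in\mathcal P$) are pairwise disjoint. Since the original probs in $\mathcal P$ are pairwise disjoint and $P^\downarrow \subseteq P$, the probs $P_2$ for distinct $P$ are disjoint; and since $E_P \subseteq \boxset{S_P} \subseteq P^\uparrow \subseteq P$ with $P^\uparrow$ and $P^\downarrow$ vertically disjoint inside $P$, each $P_1$ is disjoint from the corresponding $P_2$, and $P_1$ for distinct $P$ are disjoint (they sit inside disjoint original probs $P$). So all elements of $\mathcal P'$ are mutually disjoint.

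For item (2), I would show each $P\in\mathcal P$ is still a stable prob for $\mathcal F'$. The new sets $S_Q$ for $Q\neq P$ miss $P$ by \Cref{prop:more-on-Gamma-and-disjoint-probs}(1), and $S_P$ crosses $P$ and satisfies $\bset{S_P} < \bset P < \tset P < \tset{S_P}$ by construction (matching $\boxset{S}$ to $P^\uparrow$ then stretching horizontally does not change the vertical extent, so $S_P$ spans all of $P$ vertically), so the only new element of $N_{\mathcal F'}(P)$ compared with $N_{\mathcal F}(P)$ is $S_P$; the root $R$ witnessing stability of $P$ for $\mathcal F$ can be shrunk if necessary so it avoids $S_P$ as well, and since $R \subseteq \Ter{A}$ for all $A\in N_{\mathcal F}(P)$ and $R$ can be taken inside $\Ter{S_P}$ too, stability is preserved; one also checks disjointness of $N_{\mathcal F'}(P)$ using \Cref{prop:more-on-Gamma-and-disjoint-probs}(1) and the original stability.

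For item (3) — the main obstacle — I would verify Constraints (C1)--(C6) for $\mathcal F'$ by a case analysis on whether the sets involved are old (in $\mathcal F$) or new (of the form $S_P$). If all sets are old, the constraint is inherited from $\mathcal F$. If a new set $S_P$ meets an old set $A$, then by \Cref{prop:more-on-Gamma-and-disjoint-probs}(4) we have $S_P \adj A$ and $A\in N(P)$, and since $S_P$ also satisfies the left-side-in-territory condition (its leftmost column sits in $R_P \subseteq \Ter A$ by \Cref{prop:every-root-stable}), (C1) holds; two distinct new sets $S_P, S_Q$ are disjoint by \Cref{prop:more-on-Gamma-and-disjoint-probs}(1), handling (C1) there too. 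Constraint (C2) reduces, via \Cref{prop:prop-after-def-Gamma}(2) (which puts $E_P$, hence all of $\Ter{S_P}$ that matters, outside $\boxset{\mathcal F}$), to the case of old sets or to a territory-intersection forced by the stable-prob structure, giving $\prec$. For (C3) and (C4), the key points are that no $C\in\mathcal F'$ can lie in $\Ter{S_P}\cap\Ter{A}$ because $\Ter{S_P}$ stretches rightward out of $\boxset{\mathcal F}$ while every member of $\mathcal F$ lies inside it (and new sets $S_Q$ are handled by item (1) of \Cref{prop:more-on-Gamma-and-disjoint-probs}), and that the configuration $A\prec B$, $A\adj C$, $B\adj C$ cannot involve a new set as $C$ (nothing is $\adj$-above $S_P$) nor as $A$ or $B$ (a new set $S_P$ is $\prec$-comparable to nothing relevant, since $\Ter{S_P}$ is outside the old bounding box). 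Constraint (C5) (triangle-freeness) follows because each new $S_P$ is adjacent only to the pairwise-disjoint set $N(P)$, so adding it creates no triangle, together with (C5) for $\mathcal F$. Finally (C6) is immediate since $T_P$ is a positive transformation by \Cref{prop:prop-after-def-Gamma}(1), so $S_P$ is a positive transformed copy of (the relevant orientation of) $S$. The bookkeeping in the old/new case split for (C3) and (C4) is where the real care is needed.
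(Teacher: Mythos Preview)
Your treatment of item (2) is the central problem. The lemma (despite the typographical ``$P$'') asks that the \emph{new} probs $P_1, P_2 \in \mathcal P'$ be stable for $\mathcal F'$; the paper's proof verifies the four stability conditions separately for $P_1$ (using that $E_P$ is a subterritory of $S_P$ and \Cref{prop:more-on-Gamma-and-disjoint-probs}(2)) and for $P_2$ (using \Cref{prop:more-on-Gamma-and-disjoint-probs}(3) and the stability of the original $P$). You instead try to show that the \emph{original} probs $P\in\mathcal P$ stay stable for $\mathcal F'$, and in doing so you assert facts that are simply false: by construction $\boxset{S_P}$ has the vertical extent of $P^\uparrow$, so $\bset{S_P}=\bset{P^\uparrow}>\bset P$ and $\tset{S_P}=\tset{P^\uparrow}=\tset P$, not $\bset{S_P}<\bset P<\tset P<\tset{S_P}$ as you write. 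In particular $S_P$ does \emph{not} cross $P$ vertically, and since $S_P\in N_{\mathcal F'}(P)$ violates condition (3) of stability, $P$ is in fact \emph{not} a stable prob for $\mathcal F'$. So both the target and the argument in your item (2) are wrong.

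There are also real gaps in item (3). For (C2), the mixed cases ``$A$ new, $B$ old'' and ``$A$ old, $B$ new'' are not dispatched by your sketch; the paper rules them out by showing that in each case the two sets would have to intersect, using the horizontal/vertical crossing lemma (\Cref{prop:horizont-and-vertical-crossing-intersect}). For (C3), your claim that ``$\Ter{S_P}$ stretches rightward out of $\boxset{\mathcal F}$'' is not correct: $\Ter{S_P}\subseteq\boxset{S_P}$ and $\lset{\boxset{S_P}}=\lset{P}$, so part of $\Ter{S_P}$ lies inside $\boxset{\mathcal F}$; the paper instead argues via the vertical bounds $\bset C<\bset P\leq\bset A$ versus $\bset C\geq\bset A$. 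For (C4), the case $B=S_P$ cannot be dismissed by saying ``$S_P$ is $\prec$-comparable to nothing relevant'': one genuinely can have $A\prec S_P$ with $A\in\mathcal F$, and the paper derives a contradiction from the disjointness of $N_{\mathcal F}(P)$ (both $A$ and $C$ end up in $N(P)$ yet $A\cap C\neq\varnothing$). Finally, in item (1) your containment $E_P\subseteq\boxset{S_P}\subseteq P^\uparrow$ is false (the horizontal stretch pushes $\boxset{S_P}$ and $E_P$ to the right of $\boxset{\mathcal F}$ by \Cref{prop:prop-after-def-Gamma}); only the \emph{vertical} extents of $P_1, P_2$ sit inside that of $P$, which is what actually gives disjointness.
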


\begin{proof} We adopt the notation from the definition of $\Gamma$.
	
	Set $ \mathfrak B = \boxset{\mathcal F}$ and $ \mathfrak{B'} = \boxset{\mathcal F'}$. Notice that $ \lset{\mathfrak{B'}} = \lset{\mathfrak{B}}$, $ \bset{\mathfrak{B'}} = \bset{\mathfrak{B}}$, and $ \tset{\mathfrak{B'}} = \tset{\mathfrak{B}} $. However, $ \rset{\mathfrak{B'}} >  \rset{\mathfrak{B}}$.

	{\noindent \textbf{Claim}. \textit{Elements of $\mathcal P' $ are mutually disjoint.}} 	
	
	Corresponding to every $ P \in \mathcal P $, there are two probs in $ \mathcal P' $, that is, $P_1 $ and $ P_2 $. Notice that $ P_1 \cap P_2 = \varnothing $. So, the fact that the probs in $ \mathcal P' $ are mutually disjoint is implies directly by the same property for $ \mathcal P$.

	{\noindent \textbf{Claim}. \textit{Elements of  $\mathcal P' $ are stable probs for $\mathcal F' $.}}
	
	Fix $ P \in \mathcal P $. We prove that both $ P_1 $ and $P_2 $ are stable probs, and thus every prob in $ \mathcal P' $ is stable. 
	
	First, the prob $ P_1 $ is defined by a subterritory $ E_P $. By Property~\ref{prop:prop-after-def-Gamma}, $E_P \cap \mathfrak{B} = \varnothing $. Therefore, for every $ A \in \mathcal F$, we have $ E_P \cap A = \varnothing $. Moreover, by definition of subterritory, $E_P \cap S_P = \varnothing $. Finally, since $ E_P \subseteq P $, by Property~\ref{prop:more-on-Gamma-and-disjoint-probs}, we have $ E_P \cap S_Q = \varnothing $ for every $ Q \in \mathcal P \sm \{P\} $ as well. Thus $ E_P $ does not intersect any element of $ \mathcal F' $. So, $ E_P $ is a root for $P_1 $. 
	
	Now, notice that by Property~\ref{prop:more-on-Gamma-and-disjoint-probs}, we have $N(P_1) = \{S_P\} $, so item (2) of the definition of stable prob holds. Moreover, since $ E_P $ is a subterritory of $ S_P$, we have  \begin{itemize} \item $ E_P \subseteq \Ter{S_P}$,  \item $ \bset{E_P} > \bset{S_P}$ and $ \tset{E} < \tset{S_P}$, \item $S_P $ crosses $ P_1$ vertically, \end{itemize} which proves item (1), (3), and (4) of the definition of stable prob, respectively. For item (3), we have used the facts that $ \bset{P_1} = \bset{E_P} $ and $ \tset{P_1} = \tset{E_P} $.
	
	Second, by the hypothesis, $ P $ has a root. Let $ R $ be a root of $ P $. Set $ R^\downarrow = R \cap P^\downarrow $ and notice that $ R^\downarrow $ is a root of $ P^\downarrow $, as a prob for $\mathcal F $. In particular, $ R^\downarrow $ does not intersect any element of $ \mathcal F $. Now, let $ A \in N_{\mathcal F'} (P_2) $. By Property~\ref{prop:more-on-Gamma-and-disjoint-probs}, we have $ A \in N_{\mathcal F}(P) $. Therefore, $ R \subseteq \Ter A$. Consequently, $ R^\downarrow \subseteq \Ter A $. This implies item (1) of the definition of stable prob. Moreover, since $ A \in N_{\mathcal F}(P) $ and $ P$ is stable, we have  
	$$ \bset{A} < \bset{P} = \bset{P^\downarrow} = \bset{P_2}, \text{ and } \tset{A} > \tset{P} \geq \tset{P^\downarrow} = \tset{P_2}, $$
	which implies item (3) of the definition. Also, since $ A $ crosses $ P$ vertically, by Property~\ref{prop:crossing-a-subrectangle}, it crosses $ P^\downarrow $ vertically as well, which implies item (4) of the definition. 
	
	Now, assume that $ A, B \in N_{\mathcal F'} (P_2) $ and $ A \neq B $. Again, by Property~\ref{prop:more-on-Gamma-and-disjoint-probs}, we have $ A, B \in N_{\mathcal F}(P) $. Thus, $ A\cap B =\varnothing $, implying item (2) of the definition.  Hence, $P_2 $ is a stable prob.

	Now, we prove that $\mathcal F' $ satisfies Constraints (C1)-(C6).
	
	{\noindent \textbf{Claim}. \textit{$ \mathcal F' $ satisfies \emph{(C1).}}} 
	
	Let $ A, B \in \mathcal F'$ be two distinct and intersecting transformed copies of $ S $. Set $ L_A = \{ (x,y) \in A : x = \lset A  \} $. Notice that $ \lset A = \lset{L_A}$. 
	
	If $ A, B \in \mathcal F $, then the result holds because $ \mathcal F $ satisfies (C1). Furthermore, by Property~\ref{prop:prop-after-def-Gamma}, we cannot have $ A, B \in \mathcal F' \sm \mathcal F$. So, without loss of generality, assume $ A \in \mathcal F' \sm \mathcal F $, so $ A = S_P $ for some $ P \in \mathcal P $, and $ B \in \mathcal F$. In particular, $ B \subseteq \mathfrak B $, and by construction, $ A \cap (\mathfrak{B} \sm P^\uparrow) = \varnothing $. Hence, $ B \cap P^\uparrow \neq \varnothing $, and therefore $ B \in N_{\mathcal F}(P) $. Thus, by Property~\ref{prop:more-on-Gamma-and-disjoint-probs}, for every root $ R $ of $ P $, we have $ R \subseteq \Ter B  $. Moreover, we have $ \bset{B}< \bset{P} $ and $ \tset{B}> \tset{P} $. Also, notice that by construction, for every $ \mathfrak{s} \in \{\mathfrak{l}, \mathfrak{r}, \mathfrak{b}, \mathfrak{t} \}$, we have $ \mathfrak{s}(A) = \mathfrak{s}(P^\uparrow)$. Let $ p= (x,y) \in  L(A) $. So, $ x =\lset{A} \lset{P}$ and $ y \in (\bset{P}, \tset{P})$. Moreover, $ \bset P \leq \bset A  \leq y \leq \tset A  \leq \tset P $. Therefore, $ (x, y) \in \{ (x',y') \in P : x' = \lset P \}$. Consequently, $ (x, y) \in R $. So, $ L_A \subseteq R \subseteq \Ter B $.

	Moreover, we have: \begin{multline*} \lset B = \lset{\boxset B } \leq \lset{\Ter B} \leq \lset{L_A} \\  = \lset A  = \lset{P^\uparrow}  =  \lset P  = \lset R < \rset R  \\  \leq \rset{\Ter B} \leq \rset{\boxset B} = \rset B \overset{(a)}{\leq} \rset{\mathfrak{B}} \overset{(b)}{<} \rset{A}, \end{multline*}	 where (a) is because $ B \in \mathcal F $, and (b) follows from Step (S$'$2) of the construction. Therefore $ \lset B \leq \lset A < \rset B < \rset A $.
	
	On the other hand, \begin{equation*} \bset B < \bset P < \bset{P^\uparrow} = \bset A \overset{(c)}{<} \tset A = \tset P < \tset B,  \end{equation*} where (c) follow from the fact that $ A $, a strong Pouna set, cannot be a subset of a horizontal line segment. Therefore $ \bset B < \bset A < \tset A < \tset B$. 
	
	Hence, all the items in Constraint (C1) hold and $ A \adj B $.

	{\noindent \textbf{Claim}. \textit{$ \mathcal F' $ satisfies \emph{(C2).}}} 
	
	Let $ A $ and $ B $ be two disjoint sets in $ \mathcal F' $ such that $ A \cap \Ter B \neq \varnothing $. We prove that $ A , B \in \mathcal F $. For the sake of contradiction, assume that $ \{A, B\} \nsubseteq \mathcal F$. There are three cases possible. 
	
	Case 1: $ A, B \in \mathcal F' $. So, there exists $ P, Q \in \mathcal P $ such that $ P \neq Q $ and $ A=S_P $ and $ B = S_Q $. But in that case, by construction, $ \boxset B \subseteq Q $, and $ A \subseteq P$. So, from $ A \cap \Ter B \neq \varnothing $, we have $ P \cap Q \neq \varnothing $, a contradiction. 
	
	Case 2: $ A =S_P $ for some $P \in \mathcal P $, and $ B \in \mathcal F $. Since $ A \subseteq P $, form $  A \cap \Ter B \neq \varnothing $ we deduce that $ P \cap \Ter B \neq \varnothing$. Choose $ p  = (x,y) \in P \cap \Ter B $. Because by definition of Territory, there exists a point $p'= (x',y) \in B $ with $ x' > x $. Now, because $ B \subseteq F $, we have $ p' \in \mathfrak B $ and therefore $ p' \in P $. Hence $ P \cap B \neq \varnothing $, i.e.\ $ B \in N(P) $. Therefore, $ B $ crosses $ P$ vertically. Moreover, $A = S_P $ crosses $ P_1 $ and therefore $P$ horizontally. So, by \Cref{prop:horizont-and-vertical-crossing-intersect}, we have $ A \cap B \neq \varnothing $, a contradiction.
	
	Case 3: $ A \in \mathcal F $ and $ B =S_P $ for some $P \in \mathcal P $. In this case $ \Ter B \subseteq P $, and therefore $ A \cap P \neq \varnothing $, i.e. $A \in N(P) $. So, $ A $ crosses $P $ vertically. On the other hand, $ B $ crosses $ P_1 $ and thus $ P$ horizontally. Therefore, by \Cref{prop:horizont-and-vertical-crossing-intersect}, we have $ A \cap B \neq \varnothing $, a contradiction.

	{\noindent \textbf{Claim}. \textit{$ \mathcal F' $ satisfies \emph{(C3).}}} 
	
	Let $ A , B \in \mathcal F $ be two distinct sets with non-empty intersection. For the sake of contradiction, assume that there exists $ C \in \mathcal F $ such that $ C \subseteq \Ter A \cap \Ter B $. We first show that $ C \in \mathcal F $. Suppose not, so $ C  = S_P $ for some $P  \in \mathcal P$. Since $ C \subset P$, neither of $ A $ and $ B $ can be some set of the form $ S_Q $. Therefore $ A, B \in \mathcal F $. Now, notice that $ C \subseteq \Ter A \subseteq \boxset A$. On the other hand, $ \boxset A \subseteq \mathfrak B $, but $ C \nsubseteq \mathfrak B$, a contradiction. 
	
	Now we prove that both $ A $ and $ B $ are in $ \mathcal F $. Suppose not. Without loss of generality, assume that $A= S_P $ for some $P \in \mathcal P $. Since $ C \subseteq \Ter A $, we must have $ C \in N(P) $. Therefore $ \bset C < \bset P \leq \bset A $. On the other hand, because $ C \subseteq \Ter A \subseteq \boxset A $, we have $ \bset C \geq \bset A $, a contradiction. So, $ A, B \in \mathcal F $ as well, and the result follows from the fact that $\mathcal{F}$ satisfies~(C3).

	{\noindent \textbf{Claim}. \textit{$ \mathcal F' $ satisfies \emph{(C4).}}} 
	
	Fix $P \in \mathcal P$. Let us first prove that there exists no $ A \in \mathcal F; $ such that $ A \adj S_P $ or $ S_P \prec A $. First, if $ A \adj S_P$, then, in particular, $ A \cap S_P \neq \varnothing $. Thus, by Property~\ref{prop:more-on-Gamma-and-disjoint-probs}, we have $ A\in \mathcal F $. Therefore, $ \rset{A} \leq \rset{\mathcal F} < \rset{P^\uparrow} = \rset{S_P}$. But on the other hand, $ A \prec S_P $ implies $ \rset{A} > \rset{S_P}$, a contradiction. Second, if $ S_P \prec A $, then in particular $S_P \subseteq \Ter{A} $. Also, by construction $ S_P \subseteq P $. Therefore, $ \Ter{A} \cap P \neq \varnothing $. Hence, by Property~\ref{prop:more-on-Gamma-and-disjoint-probs}, we have $ A \in \mathcal F $. Therefore $ \rset{A} \leq \rset{\mathcal F} < \rset{P^\uparrow} = \rset{S_P}$. On the other hand, by Lemma~\ref{lem:prec-properties}, $ S_P \prec  A $ implies that $ \rset{S_P} < \rset{A}$, a contradiction.
	
	Now, for the sake of contradiction, assume that there exists $ A, B, C  \in \mathcal F' $ such that $ A \prec B $, $ A \adj C $, and $ B \adj C $. From what we proved above, we know that $ A, C \in \mathcal F$. Therefore, since $ \mathcal F $ satisfies (C4), we cannot have $ B\in \mathcal F $. So, $ B = S_P $ for some $ P \in \mathcal P $. In particular $ \Ter{B} \subseteq \boxset{B} \subseteq P^\uparrow$. 
	
	From $A \prec B $, we have $ A \subseteq \Ter B \subset P^\uparrow \subseteq P $. Therefore, $ A \in N_{\mathcal F}(P)$. 
	
	On the other hand, from $ B \prec C $, we have $ B \cap C \neq \varnothing $, therefore $ C \cap P^\uparrow \neq \varnothing $. So, $ C \in   N_{\mathcal F}(P)$. 
	
	So, $ A $ and $ C $ are two sets in $ N_{\mathcal F}(P) $ that are not disjoint, which contradicts the fact that $ P$ is stable.

	{\noindent \textbf{Claim}. \textit{$ \mathcal F' $ satisfies \emph{(C5).}}} 
	
	For the sake of contradiction, assume that $ A $, $ B $, and $ C $ are three sets in $ \mathcal F' $ that two by two intersect. At least one of the three sets must be in $ \mathcal F' \sm \mathcal F $, because (C5) holds for $\mathcal F $. Moreover, because of Property~\ref{prop:more-on-Gamma-and-disjoint-probs}, at most one of the three sets is in $ \mathcal F' \sm \mathcal F $. So, without loss of generality, assume that $ A=S_P $ for some $P \in \mathcal P $, and that $ B, C \in \mathcal F $. But since $ B \cap A \neq \varnothing $, we have $ B \cap P \neq \varnothing$, i.e.\ $B \in  N(P) $. Similarly, $ C \in N(P)$. But $ B \cap C \neq \varnothing $ contradicts the fact that $ P$ is stable for $ \mathcal F $. Hence, (C5) holds for $ \mathcal F' $.

	{\noindent \textbf{Claim}. \textit{$ \mathcal F' $ satisfies \emph{(C6).}}} 
	
	By assumption, $ S $ is strong. So, it is enough to show that $ T_P $, in Step (S$'$2), is a positive transformation for every $P \in \mathcal{P}$. This follows from the fact that $ T_p $ is positive, as shown in Property~\ref{prop:prop-after-def-Gamma}. 
	
	This completes the proof of the lemma.  
\end{proof}

\medskip
Now we can prove that Burling graphs are constrained $ S $-graphs. 

\begin{theorem} \label{thm:equalthm-Burling-is-constraintS}
	Let $ S $ be a Pouna set. Every Burling graph is a constrained $ S $-graph. 
\end{theorem}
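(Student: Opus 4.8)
The plan is to prove \Cref{thm:equalthm-Burling-is-constraintS} by induction on the length $k$ of the Burling construction, showing that for every $k\ge 1$ the family $\mathcal F_k$ of \Cref{def:the-definition-of-geometric-sequence} satisfies Constraints \textup{(C1)-(C6)} and that $\mathcal P_k$ is a set of mutually disjoint stable probs for $\mathcal F_k$. Since the oriented intersection graphs of the $\mathcal F_k$'s generate the class of oriented Burling graphs (as assumed above from \cite{pawlik2013general,PournajafiPhDThesis}), and the class of oriented constrained $S$-graphs is hereditary, this will immediately give that every Burling graph is a constrained $S$-graph for any \emph{strong} Pouna set $S$. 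Then, for a general Pouna set $S$, I would invoke \Cref{prop:strong-perkins-S-or-horizontal-reflection}: either $S$ or its horizontal reflection $S'$ is strong, and in the latter case the Burling graph is a constrained $S'$-graph built from positive transformed copies of $S'$, which is precisely what Constraint \textup{(C6)} permits; so it is also a constrained $S$-graph. This is where the ``strong or horizontal-reflection-strong'' dichotomy baked into \textup{(C6)} pays off.

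For the base case $k=1$, I would check directly that $\mathcal F_1=\{S\}$ with $\mathcal P_1=\{P\}$, where $P$ is the prob defined by the subterritory $E$, satisfies everything: Constraints \textup{(C1)-(C5)} are vacuous for a single-element family (no two distinct sets), \textup{(C6)} holds since $S$ is a positive transformed copy of itself, the single prob is trivially ``mutually disjoint'' with itself, and $P$ is stable because $E$ is a subterritory of $S$ (so $E\subseteq\Ter S$ is a root, $N(P)=\{S\}$, the crossing condition is the third clause of the definition of subterritory, and $\bset S<\bset P$, $\tset P<\tset S$ come from clause (2) of the subterritory definition together with the fact that $S$ is not contained in a horizontal segment). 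The inductive step is where the real work is, and it is almost entirely packaged already: assuming $(\mathcal F_k,\mathcal P_k)$ has the stated properties, I form $(\mathcal F_{k+1},\mathcal P_{k+1})=\nextF(\mathcal F_k,\mathcal P_k)$, which by \Cref{def:const-pawlik-to-walzack} is built from $(\mathcal F_0,\mathcal P_0)=\Gamma(\mathcal F_k,\mathcal P_k)$ together with transformed copies of this whole object glued into roots.

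The main obstacle — and the step I expect to need the most care — is showing that gluing the positively-transformed copies $(\mathcal F^P,\mathcal P^P)$ of $(\mathcal F_0,\mathcal P_0)$ into the roots $R_P$ (Step (S2) of $\nextF$) does not create any violation of \textup{(C1)-(C5)} \emph{across} the different glued pieces and between a glued piece and the ``ambient'' family $\mathcal F_k$. \Cref{lem:Gamma-preserves-C1-C6} handles the single application of $\Gamma$, and \Cref{prop:T(F)-satisfies-C1-C5} together with \Cref{prop:T(E)-subter-of-T(S)} tells us that each individually transformed copy $\mathcal F^P$ still satisfies the constraints and that its probs remain stable (after the positive transformation $T'_P$). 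So the remaining content is an ``interaction'' argument: because $\mathcal P_k$ consists of mutually disjoint \emph{stable} probs, each $R_P\subseteq P$ with the $P$'s disjoint, so the copies $\mathcal F^P$ live in pairwise disjoint regions and cannot intersect or territory-interact with one another; and by stability, every $A\in N(P)$ satisfies $R_P\subseteq\Ter A$, $\bset A<\bset P$, $\tset P<\tset A$, and $A$ crosses $P$, which is exactly what is needed to verify that each newly-added set stands in the $\adj$ relation to the right old sets and that no forbidden configuration (C3)/(C4) with a common territory or a $\prec$-below-a-common-neighbour pattern appears. I would then also note that a set from a glued copy $\mathcal F^P$ cannot intersect a set from $\mathcal F_k$ outside $R_P$ in a way that breaks (C1), again using that $R_P$ is a root (disjoint from all of $\mathcal F_k$) and that the copy's bounding box is matched to $R_P$. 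Finally, (C6) is inherited because all transformations involved — $T_P$ in $\Gamma$ (\Cref{prop:prop-after-def-Gamma}) and $T'_P$ in (S2) — are positive, so compositions remain positive and every element of $\mathcal F_{k+1}$ is a positive transformed copy of $S$. This completes the induction and hence the theorem.
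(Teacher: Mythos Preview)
Your plan is essentially the paper's own proof: induction on $k$ showing that $\mathcal F_k$ satisfies \textup{(C1)--(C6)} and that $\mathcal P_k$ consists of mutually disjoint stable probs, with the inductive step built from \Cref{lem:Gamma-preserves-C1-C6} for the $\Gamma$ step, \Cref{prop:T(F)-satisfies-C1-C5} and \Cref{prop:T(E)-subter-of-T(S)} for the positively transformed copies, and a cross-piece interaction argument; the reduction to strong $S$ via \textup{(C6)} is also exactly what the paper does.

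One slip to fix in your interaction argument: in the $\nextF$ step, the elements of a glued copy $\mathcal F^P$ are \emph{entirely disjoint} from every element of $\mathcal F_k$ (because $\boxset{\mathcal F^P}=R_P$ and a root meets no element of $\mathcal F_k$), so there is no $\adj$ relation between ``newly-added'' and ``old'' sets to verify --- \textup{(C1)} is vacuous across the pieces. The stability facts you correctly list ($R_P\subseteq\Ter A$, $\bset A<\bset P<\tset P<\tset A$, $A$ crosses $P$, for $A\in N_{\mathcal F_k}(P)$) are instead the ingredients for two other checks: first, \textup{(C2)} across the pieces (if some $C\in\mathcal F^P$ meets $\Ter B$ for $B\in\mathcal F_k$, then $B\in N_{\mathcal F_k}(P)$ and, via \Cref{prop:every-root-stable}, $R_P\subseteq\Ter B$, forcing $C\prec B$); and second, stability of the new probs $P_Q\in\mathcal P_{k+1}$, whose neighbour set splits as $N_{\mathcal F^P}(Q)\cup N_{\mathcal F_k}(P)$ and for which all four stability items and mutual disjointness must be checked on both parts. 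With that correction, your outline matches the paper's argument.
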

\begin{proof} For this proof, we adopt the notations in the definition of the construction of Pawlik, Kozik, Krawczyk, Laso\'n, Micek, Trotter, and Walczak.
	
	We may assume that $ S $ is a strong Pouna set, otherwise, we replace every $ S $ in this proof with the horizontal reflection of $ S $. Fix a subterritory $ E $ of $ S $ (which exists, by Lemma~\ref{lem:subterritory-exists}), and apply the construction on it. For every $ k \geq 1 $, we know that $ \mathcal F_k $ is a family of transformed copies of  $ S $. We first prove that $ \mathcal F_k$ satisfies Constraints (C1)-(C6). To do so, we prove the following stronger statement by induction on $ k$.

	\begin{statement} For every $ k \geq 1 $, we have: \begin{enumerate} \item the elements of $ \mathcal P_k $ are mutually disjoint, \item $\mathcal P_k $ is a family of stable probs of $ \mathcal F_k $,  \item $\mathcal{F}_k$ satisfies constraints (C1)-(C6). \end{enumerate} \end{statement}
	
	First of all, for $ k=1 $, the first item of the statement follows from the fact that the fact  that $ E $ is a subterritory of $ S $. Statement (2) and (3) hold trivially, as $ |\mathcal{F}_1|=1 $.

	Now, assume that the statement holds for some $ k \geq 1 $, we prove that it holds for $ k+1 $. 
	
	Notice that for every $ P \in \mathcal P$, the transformation $ T'_P$ is positive, so the tuple $ (\mathcal F^P, \mathcal P^P) $ in a positive transformed copy of $ \Gamma(\mathcal F_{k}, \mathcal P_{k}) $. So, by Property~\ref{prop:T(F)-satisfies-C1-C5}, we know that  \begin{equation} \label{eq:FP-sat-C1-6} \text{for every $ P \in \mathcal P $, the family $ \mathcal F^P $ satisfies Constraints (C1)-(C6).} \end{equation} Moreover, it is easy to check the following: \begin{equation} \label{eq:P-stable-mut-disj} \text{for every $ P \in \mathcal P $, the elements of $ \mathcal P^P $ are stable probs for $ \mathcal F$ and are mutually disjoint.} \end{equation}

	{\noindent \textbf{Claim}. \textit{The elements of $ \mathcal P_k $ are mutually disjoint.}} 
	
	Let $ P_Q $ and $P'_{Q'} $ be two probs in $ \mathcal P_{k+1} $. In order to show that these two probs are disjoint, it is enough to show that $ (\bset{Q}, \tset{Q}) $ and $(\bset{Q'}, \tset{Q'}) $ are disjoint intervals. If $ P = P' $, then this follows from (\ref{eq:P-stable-mut-disj}), and if $ P \neq P' $ from the fact that $ Q $ and $ Q' $ are inside the roots of $P$ and $ P'$ respectively, and $ P $ and $ P' $ are disjoint by induction hypothesis.

	{\noindent \textbf{Claim}. \textit{Every $ P \in \mathcal P_{k+1} $ is a stable prob for $\mathcal F_{k+1}$.}} 
	
	Let $ P_Q  \in \mathcal P_{k+1} $. Notice that $ Q \in \mathcal P^P$ is a prob for $\mathcal F^P $. So, by (\ref{eq:P-stable-mut-disj}), $ Q $ has a root $ R $ such that for every $ A \in N_{\mathcal F^P}(Q)$, we have $ R \subseteq \Ter{A}$. So, item (1) of the definition of stable prob holds. 
	
	Set $ N_1 = N_{\mathcal F^P}(Q) $ and $ N_2 = N_{\mathcal F_{k+1}}(P) $. 
	
	The elements in $ N_{\mathcal F_{k+1}}(P_Q) $ are either the neighbors of $ Q $ as a prob for $ \mathcal F^P $, so they are in $N_1  $, or are outside $ R_P $ and thus are in $N_2 $. The elements in $N_1 $ are mutually disjoint by (\ref{eq:P-stable-mut-disj}) and the elements in $ N_2 $ are mutually disjoint by induction hypothesis. Finally, one element in $ N_1 $ and one element in $N_2 $ are disjoint because the former is inside $ R_P $ and the latter does not intersect $ R_P $. So, item (2) of the definition holds as well.
	
	Now, fix $ A \in N_{\mathcal F_{k+1}}(P_Q)$. If $ A \in N_1 $, then $$ \bset{A} < \bset{Q} = \bset{P_Q}, \text{ and } \tset{A} > \tset{Q} = \tset{P_Q}.  $$ Moreover, there is a path in $ A $ crossing $ Q $. So, the same path crosses $P_Q $ as well. 
	
	If $ A \in N_2 $, then $$ \bset{A} < \bset{P} = \bset{R_P} \leq \bset{Q} = \bset{P_Q},	 $$ and $$ \tset{A} > \tset{P} = \tset{R_P} \geq \tset{Q} = \tset{P_Q}. $$ Moreover, there is a path in $ A $ crossing $ P$, so by \Cref{prop:crossing-a-subrectangle}, it crosses $P_Q $ as well. 
	
	Now, we check that $\mathcal{F}_{k+1}$ satisfies Constraints (C1)-(C6). In what follows, we use several times the fact that that by (\ref{eq:FP-sat-C1-6}) and by induction hypothesis, the conditions hold when all the elements are chosen inside $ \mathcal F_k $ or inside $ \mathcal F^P $ for some $P \in \mathcal P_k$. 
	
	Moreover, notice that by induction hypothesis, elements of $ \mathcal P_k $ are disjoint. Now, because every $A \in \mathcal F^P $ is entirely inside $ P $, we know that  \begin{equation} \label{eq:FP-FQ-disjoint} \text{if $ P \neq Q $, then the elements of $ \mathcal F^P $ are disjoint from the elements of  $F^Q$.} \end{equation}
	
	Furthermore, for every $ P \in \mathcal P_k$, the elements of $ \mathcal F^P $ are all inside $ R_P$. Moreover, by definition of root, no element of  $\mathcal F_k $ intersect $ R_P $, so, \begin{equation} \label{eq:Fk-FP-disjoint} \text{for every $ P\in \mathcal P $, the elements of $ \mathcal F_k $ are disjoint from the elements of $ \mathcal F^P $. } \end{equation}

	{\noindent \textbf{Claim}. \textit{$ \mathcal F_{k+1}$ satisfies \emph{(C1).}}}

	Let $ A, B \in \mathcal F_{k+1} $ be two distinct elements such that $ A \cap B \neq \varnothing $. By (\ref{eq:FP-FQ-disjoint}) and (\ref{eq:FP-FQ-disjoint}), either $ A, B \in \mathcal F_k $ or there exists $ P \in \mathcal P_k$ such that $ A, B \in \mathcal F^P $. In the former case, by induction hypothesis, we have $A \adj B $ or $ B \adj A $. In the latter case, by (\ref{eq:FP-sat-C1-6}), we have $ A\adj B $ or $ B \adj A $.

	{\noindent \textbf{Claim}. \textit{$ \mathcal F_{k+1}$ satisfies \emph{(C2).}}} 
	
	Let $ A, B \in \mathcal F_{k+1} $ such that $ A \cap B = \varnothing $ and $ A \cap \Ter{B} \neq \varnothing $. There are four cases possible:
	
	Case 1: $ A, B \in \mathcal F_k $, in which case the result follows from (\ref{eq:FP-sat-C1-6}). 
	
	Case 2: $ A \in \mathcal F_k $ and $ B \in \mathcal F^P $ for some $P \in \mathcal P_k $. 
	
	This case is not possible, because $ \Ter{B} \subseteq \boxset{B} \subseteq R_P $. However, $ A \in \mathcal F_k $, so $ A $ does not intersect $ R_P $ as it is a root of a prob for $ \mathcal F_k $.

	Case 3: $ A \in \mathcal F^P $ for some $P \in \mathcal P_k $ and $ B \in \mathcal F_k $.
	
	Since $ A \subseteq R_P $, we have $ R_P \cap \Ter{B} \neq \varnothing $. Let $ p = (x,y) \in R_P \cap \Ter{B}$. By the definition of territory, there exists $ x' > x $ such that $ p =(x', y) \in B $. Moreover, since $ R_P $ is a root of $ P $, we have $ p' \in P $. So, $ p' \in B \cap P $. Therefore, $ B \in N_{\mathcal F_k}(P)$. Hence, by (\ref{eq:P-stable-mut-disj}) and using Property~\ref{prop:every-root-stable}, we have that every root of $ P $ is inside the territory of $ B$. Hence, $ A \subseteq R_P \subseteq \Ter{B} $.  So, the result holds. 
	
	Case 4: $ A \in \mathcal F^P $ and $ B \in \mathcal F^Q $ for $ P, Q \in \mathcal P_k $. Let $ p \in A \cap \Ter{B} $. So, in particular $ p \in A \subseteq P $ and $ p \in \Ter{B} \subseteq \boxset{F^Q} \subseteq Q $. Therefore $ P\cap Q \neq \varnothing $. Hence, by the induction hypothesis, we must have  $P = Q $. So, the result follows from (\ref{eq:FP-sat-C1-6}).

	{\noindent \textbf{Claim}. \textit{$ \mathcal F_{k+1} $ satisfies \emph{(C3).}}}

	Let $ A, B \in \mathcal F_{k+1} $ be two distinct sets such that $ A \cap B \neq \varnothing $. For the sake of contradiction, assume that there exists $ C \in \mathcal F_{k+1} $ such that $ C \subseteq \Ter{A} \cap \Ter{B} $. 
	
	First of all, by (\ref{eq:FP-FQ-disjoint}) and (\ref{eq:Fk-FP-disjoint}), there are only two possible cases for $ A $ and $ B $: either $ A, B \in \mathcal F_{k} $ or $ A, B \in \mathcal F^P $ for some $ P \in \mathcal P_k $. 
	
	Case 1: $ A, B \in \mathcal F_{k} $. In this case, by induction hypothesis, we cannot have $ C \in \mathcal F_k $.  So, $ C \in \mathcal F^P $ for some $ P \in \mathcal P_k $. Consequently, $ C \subseteq R_P $. Now, let $ p = (x,y) \in C $. Since $ C \subseteq \Ter{A} $, there exists $ x'>x $ such that $ p'=(x',y) \in A $. But also, $ p' \in P $. Therefore $ A \in N_{\mathcal F_{k}}(P)$. Similarly, we can show that $ B \in N_{\mathcal F_k}(P)$.  A contradiction with the fact that the elements in $ N_{\mathcal F_k}(P) $ are mutually disjoint. 
	
	Case 2: $ A, B \in \mathcal F^P $ for some $ P \in \mathcal P_k $. Notice that  $$ \Ter{A} \subseteq \boxset{A} \subseteq \boxset{\mathcal F^P} \subseteq R_P. $$ So, $ C \subseteq R_P $. Therefore $ C \in \mathcal F^ P$ as well, and the result follows from (\ref{eq:FP-sat-C1-6}).

	{\noindent \textbf{Claim}. \textit{$ \mathcal F_{k+1} $ satisfies \emph{(C4).}}} 
	
	Assume, for the sake of contradiction, that there exists $ A, B, C \in \mathcal F_{k+1} $ such that  $ A\prec B $, $ A \adj C $, and $ B \adj C $. By (\ref{eq:FP-FQ-disjoint}) and (\ref{eq:Fk-FP-disjoint}), since $ A \cap C \neq \varnothing $ and $ B \cap C \neq \varnothing $, either $ A, B, C \in \mathcal F_k $ or $ A, B, C \in \mathcal F^P $ for some $P \in \mathcal P_k $. The former is not possible because of induction hypothesis, and the latter because of (\ref{eq:FP-sat-C1-6}). So, there exist no such triple.

	{\noindent \textbf{Claim}. \textit{$ \mathcal F_{k+1} $ satisfies \emph{(C5).}}} 
	
	For the sake of contradiction, assume that there exist three distinct sets $ A, B, C \in \mathcal F_{k+1}$ that are mutually intersecting. By induction hypothesis, such triple does not exists in $ \mathcal F_{k}$. So, at least on of the sets is in $ \mathcal F^P $ for some $P \in \mathcal P_k $. But then, (\ref{eq:FP-FQ-disjoint}) and (\ref{eq:Fk-FP-disjoint}) imply that the three sets are all in $\mathcal{F}^P$, a contradiction with (\ref{eq:FP-sat-C1-6}).

	{\noindent \textbf{Claim}. \textit{$ \mathcal F_{k+1} $ satisfies \emph{(C6).}}} 	 By assumptions, $ S $ is strong. Thus, we only need to show that every element of $ \mathcal F_{k+1}$ is a positive transformed copy of $ S $. This is true since the elements of $ \mathcal F_k $ are positive transformed copies of $ S $ and the elements of each $ \mathcal F^P $ are also positive transformed copies of $ S$, because by (\ref{eq:FP-sat-C1-6}), the family $\mathcal F^P $ satisfies (C6).

	This finishes the proof of the statement. Therefore, for every $ k \geq 1$, the oriented intersection graph of $\mathcal{F}_k$ is a constrained $ S $-graph. 
	To complete the proof we remind that the class of graphs generated by the oriented intersection graphs of $ \mathcal F_k$'s is exactly the class of Burling graphs.
\end{proof}

\subsection{Concluding the proof}

Let us conclude the proof of \Cref{thm:main-complete-theorem}.

\begin{proof}[Proof of \Cref{thm:main-complete-theorem}]
	Let $ S $ be a Pouna set. Let $ G $ be a graph. If $ G $ is a constrained graph, then by \cref{thm:equalthm-constrained-implies-abstract-Burling}, it is a Burling graph. If $ G $ is a Burling graph, then by \cref{thm:equalthm-Burling-is-constraintS}, it is a constrained $ S $-graph. Finally, if $ G $ is a constrained $ S $-graph, then by definition, it is also a constrained graph.
\end{proof}

\section*{Acknowledgments}

I would like to thank Frédéric Meunier for pointing out \Cref{thm:flores-van-kampen} to me, Paul Meunier for useful discussions and his contributions to some proofs, in particular Lemmas~\ref{prop:horizont-and-vertical-crossing-intersect} and~\ref{lem:subterritory-exists}, Gael Gallot for fruitful discussions, in particular during his internship on the topic, Julien Duron for reading some parts of the first version of this work, and J\'er\'emie Chalopin for his useful comments on related results in my Ph.D.\ thesis.

\appendix

\section{Other proofs} \label{appendix-proofs}

\begin{proof}[\textbf{Proof of \Cref{lem:real-border-lemma}}] Notice that $$ B = [B \cap A^\circ]  \cup [B \cap (X\sm \bar{A})]   \cup [B \cap \partial A]. $$ The sets, $ B \cap A^\circ $ and $ B \cap (X\sm \bar{A})$ are both open in $B $ and each is non-empty by the assumption. Moreover, their intersection is the empty set. So, if $ B \cap \partial A \neq \varnothing $, then $ B $ can be written as the union of two non-empty and non-intersecting sets that are open in $ B$, and thus $ B $ is not connected.   
\end{proof}

\begin{proof}[\textbf{Proof of \Cref{lem:crossing-between-two-lines}}] 
	Let $ X_0 = \gamma^{-1}(L_0) = \{ x\in [0,1] : \gamma(x) \in L_0 \} $. Notice that $ X_0 $ is closed since it is the pre-image of a closed set under a continuous function, and is bounded. So, $ X_0 $ is compact. Moreover, $ 0 \in X_0 $, so $ X_0 \neq \varnothing $. Thus, we can set $ x_0 = \max X_0$. 
	
	Set $ \gamma'' = \gamma|_{[x_0,1]} $, and let $ X_1 = \gamma''^{-1}(L_1) = \{x \in [x_0, 1]: \gamma''(x) \in L_1 \}$. Again, $ X_1 $ is compact, and it is non-empty since $ 1 \in X_1 $. So, we can set $ x_1 = \min X_1 $. 
	
	Set $ \gamma' = \gamma''|_{[x_0, x_1]}$. We prove that $ \im{\gamma'} \subseteq \{ (x,y) : y_0 \leq y \leq y_1 \} $.
	
	Assume, for the sake of contradiction, that there exists a point $ t \in (x_0, x_1) $ such that $(\pi_2 \circ \gamma'') (t) \leq y_0 $ or $(\pi_2 \circ \gamma'') (t) \geq y_1 $. In the former case, by the intermediate value theorem, there exists $ t' \geq t > x_0 $ such that $ (\pi_2 \circ \gamma'') (t) = y_0 $. Thus $ t' \in X_0$, contradicting the choice of $x_0 $. In the latter case, there exists $ t' \leq t < x_1 $ such that $ (\pi_2 \circ \gamma'') (t) = y_0 $. Thus $ t' \in X_1$, contradicting the choice of $ x_1$.  
	
	The second point is clear from the choice of $ x_0$ and $ x_1$. 
\end{proof}

\begin{proof}[\textbf{Proof of \Cref{prop:crossing-a-subrectangle}}] 
	Let $ \gamma: [0,1] \rightarrow R \cap A $ be the crossing path. By two times use of the intermediate theorem on the function $\pi_2 \circ \gamma $, we conclude that there exist $ x_0 $ and $ x_1 $ with $ x_0 \leq x_1 $ such that $ \gamma(x_0) $ and $ \gamma(x_1) $ are respectively on the bottom side-and the top-side of $ R' $. Applying \Cref{lem:crossing-between-two-lines} to the path $ \gamma|_{[x_0, x_1]} $ completes the proof of the lemma.  
\end{proof}

\subsection*{For the proof of \Cref{prop:horizont-and-vertical-crossing-intersect}}

We recall that an \emph{arc}\index{arc} in a topological space $ X $ is a homeomorphism from a closed interval in $\RR$ to $ X $. In particular, every arc is a path. We say that two paths $\gamma_1 : [a_1,b_1] \rightarrow \RR$ and $\gamma_2 : [a_2, b_2] \rightarrow \RR $ are internally disjoint if their images do not intersect but possibly on common endpoints, i.e.\
for $ i \in \{1,2\}$, we have $ \gamma_i((a_i, b_i)) \cap \im{\gamma_{3-i}} = \varnothing$. 

It is well known that $K_5$ is not a planar graph. In other words, if we have 5 distinct points in the plane and every two distinct points among them are joined by an arc, then at least two of these arcs are not internally disjoint. However, it is possible to replace the ``arc" in the above statement with ``path". This fact follows from the Flores-Van Kampen theorem. In the following presentation of this theorem from~\cite{Sarkaria1991}, $ \sigma_k^d$ denotes the $k$-skeleton of the $d$-dimensional simplex. 

\begin{theorem}[Flores-Van Kampen theorem; Flores~\cite{Flores1932}; Van Kempen~\cite{VanKampen1933}] \label{thm:flores-van-kampen}
	For any continuous map $f: \sigma_{s-1}^{2s} \to \RR^{2(s-1)}$ there exist a pair $(s_1, s_2)$ of disjoint simplices of $ \sigma_{s-1}^{2s} $ such that $f(s_1)\cap f(s_2) \neq \varnothing$. 
\end{theorem}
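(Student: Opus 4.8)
The plan is to derive this from the Borsuk--Ulam theorem via the classical deleted-join construction, so I would argue by contraposition. Suppose $f\colon K\to\RR^{2(s-1)}$ is continuous, where $K=\sigma_{s-1}^{2s}$ is the $(s-1)$-skeleton of the $2s$-simplex, and suppose $f(s_1)\cap f(s_2)=\varnothing$ for every pair $s_1,s_2$ of vertex-disjoint simplices of $K$ (an \emph{almost-embedding}); I will derive a contradiction. Write $d=2(s-1)$ and let $K^{*2}_\Delta$ denote the deleted join of $K$ with itself: the simplicial complex whose simplices are the disjoint unions $\sigma_1\uplus\sigma_2$ with $\sigma_1,\sigma_2$ simplices of $K$ satisfying $\sigma_1\cap\sigma_2=\varnothing$. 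The coordinate swap $\sigma_1\uplus\sigma_2\mapsto\sigma_2\uplus\sigma_1$ is a free $\ZZ/2$-action on the geometric realization $\lVert K^{*2}_\Delta\rVert$.

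First I would convert $f$ into a $\ZZ/2$-equivariant map to a sphere. The affine join of $f$ with itself, $t\cdot x\oplus(1-t)\cdot y\mapsto t\cdot f(x)\oplus(1-t)\cdot f(y)$, maps $\lVert K^{*2}_\Delta\rVert$ into $\RR^{d}*\RR^{d}$, and its image misses the diagonal $\{\tfrac12 p\oplus\tfrac12 p: p\in\RR^{d}\}$ precisely because vertex-disjoint faces of $K$ have disjoint $f$-images. Composing with the standard $\ZZ/2$-equivariant deformation retraction of $(\RR^{d}*\RR^{d})\setminus\{\text{diagonal}\}$ onto a copy of $S^{d}$ on which the swap acts antipodally --- concretely, $t\cdot p\oplus(1-t)\cdot q$ goes to the unit vector in the direction of $\big(2t-1,\ tp-(1-t)q\big)\in\RR\times\RR^{d}$ --- yields a $\ZZ/2$-equivariant map $g\colon\lVert K^{*2}_\Delta\rVert\to S^{d}$.

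The combinatorial heart, which I expect to be the main obstacle, is to prove that no such equivariant map can exist, and this is exactly where the assumption that $K$ is the $(s-1)$-skeleton of the $2s$-simplex --- the critical case --- enters. The key claim is that $\lVert K^{*2}_\Delta\rVert$ is a triangulated $(2s-1)$-sphere on which the coordinate swap acts freely; for the argument only the weaker statement that it is $(2s-2)$-connected is needed. I would prove this by realizing $K^{*2}_\Delta$ as the subcomplex of the cross-polytope sphere $S^{2s}$ --- the deleted join of the \emph{full} $2s$-simplex with itself --- consisting of the cells $A\uplus B$ with $A\cap B=\varnothing$ and $\lvert A\rvert,\lvert B\rvert\le s$, and then checking, via a shelling or discrete-Morse argument, that this subcomplex is again a PL sphere, this time of dimension $2s-1$. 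The bookkeeping with face sizes here is where the genuine work lies; as a sanity check, for $s=1$ the subcomplex is exactly the $6$-cycle $C_6\cong S^{1}$, matching $2s-1=1$.

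Granting this, the contradiction is immediate: a free $\ZZ/2$-space that is $(2s-2)$-connected --- in particular $S^{2s-1}$ with the antipodal action --- admits no $\ZZ/2$-equivariant map to $S^{2s-2}$, which is the equivariant form of the Borsuk--Ulam theorem (there is no antipodal map $S^{n+1}\to S^{n}$). Since $d=2s-2$, the map $g$ constructed above is precisely such a forbidden map, so no almost-embedding $f$ can exist. I would also note that the application needed in this paper is only the case $s=2$, where $\sigma_1^{4}=K_5$, $d=2$, and the relevant deleted join is $S^{3}$, so there the whole argument reduces to the classical topological non-planarity of $K_5$; but the deleted-join proof above is uniform in $s$ and is the one recorded in the cited references.
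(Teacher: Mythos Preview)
The paper does not prove this theorem at all: it is quoted as a classical result from the cited references \cite{Flores1932,VanKampen1933,Sarkaria1991} and then immediately specialized to $s=2$ (Corollary~\ref{cor:cor-of-flores-van-kampen}) for use in the proof of Lemma~\ref{prop:horizont-and-vertical-crossing-intersect}. So there is nothing in the paper to compare your argument against.

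That said, your outline is the standard modern proof via the deleted-join construction and Borsuk--Ulam (essentially the argument recorded in Matou\v{s}ek's \emph{Using the Borsuk--Ulam Theorem}). The reduction to a $\ZZ/2$-equivariant map $\lVert K^{*2}_\Delta\rVert\to S^{2s-2}$ is correct, and you are right that the crux is the connectivity of $\lVert(\sigma_{s-1}^{2s})^{*2}_\Delta\rVert$. One remark: for the purposes of the present paper you would not need to carry out the general shelling argument you allude to, since only $s=2$ is used and there the deleted join of $K_5$ being $S^3$ (or merely $2$-connected) suffices; you already note this. If you want a cleaner route to the connectivity in general, the usual trick is to observe that $(\sigma_{s-1}^{2s})^{*2}_\Delta$ is \emph{exactly} the boundary of the $(2s+1)$-cross-polytope with the two ``monochromatic'' top faces of size $>s$ on each side removed, and then invoke the fact that deleting a pair of antipodal open facets from $\partial\diamondsuit^{2s+1}\cong S^{2s}$ still leaves a space of connectivity $2s-2$; this avoids building a shelling from scratch.
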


Applying \Cref{thm:flores-van-kampen} to $ s=2 $ results in the desired statement as follows.

\begin{corollary} \label{cor:cor-of-flores-van-kampen}
	Let $ S $ be a set of 5 distinct points in the plane such that for every $ a, b \in S$ with $ a \neq b$, there exists a path $ \gamma_{a,b} $ joining $ a $ to $ b $. Then, there are four distinct points $ a, b, c, d \in S $ such that $ \im{\gamma_{a,b}} \cap \im{\gamma_{c,d}} \neq \varnothing $.
\end{corollary}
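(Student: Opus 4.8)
The plan is to deduce \Cref{cor:cor-of-flores-van-kampen} from \Cref{thm:flores-van-kampen} by specializing to $s=2$ and translating the combinatorial conclusion about simplices into the statement about paths. First I would set up the relevant simplicial complex: for $s=2$, $\sigma_{s-1}^{2s} = \sigma_1^4$, the $1$-skeleton of the $4$-dimensional simplex, which is exactly the complete graph $K_5$ on the $5$ vertices $v_0,\dots,v_4$ together with all $\binom{5}{2}=10$ edges, each edge being a $1$-simplex. The target space is $\RR^{2(s-1)} = \RR^2$. So the theorem says: for any continuous $f:K_5 \to \RR^2$ there is a pair of disjoint simplices whose images meet.

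Next I would build the map $f$ from the data of the corollary. Label the five points of $S$ as $a_0,\dots,a_4$ and send each vertex $v_i$ of $K_5$ to $a_i$. Each edge $v_iv_j$ of $K_5$ is homeomorphic to $[0,1]$, so I can define $f$ on that edge by reparametrizing the given path $\gamma_{a_i,a_j}:[0,1]\to\RR^2$ (which joins $a_i$ to $a_j$). This is continuous on each closed simplex and the definitions agree on the shared vertices, so by the gluing lemma $f$ is a well-defined continuous map $K_5\to\RR^2$. Now apply \Cref{thm:flores-van-kampen}: there exist disjoint simplices $s_1,s_2$ of $K_5$ with $f(s_1)\cap f(s_2)\neq\varnothing$. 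I would then do a short case analysis on the dimensions of $s_1$ and $s_2$. Two disjoint $0$-simplices are distinct vertices $v_i,v_j$, but $f(v_i)=a_i\neq a_j=f(v_j)$ since the points of $S$ are distinct, so this case cannot occur. A $0$-simplex $v_i$ disjoint from a $1$-simplex $v_jv_k$ (with $i\notin\{j,k\}$): then $a_i\in\im{\gamma_{a_j,a_k}}$, and taking $\{a,b\}=\{a_i, a_\ell\}$ for any fourth vertex $a_\ell$ and $\{c,d\}=\{a_j,a_k\}$, the path $\gamma_{a_i,a_\ell}$ contains $a_i$ as an endpoint, so $\im{\gamma_{a,b}}\cap\im{\gamma_{c,d}}\ni a_i$ — here I must be slightly careful to pick four \emph{distinct} points, which is possible since $|S|=5$. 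The main case is two disjoint $1$-simplices: $s_1=v_iv_j$ and $s_2=v_kv_\ell$ with $\{i,j\}\cap\{k,\ell\}=\varnothing$, and then $f(s_1)\cap f(s_2)=\im{\gamma_{a_i,a_j}}\cap\im{\gamma_{a_k,a_\ell}}\neq\varnothing$ gives exactly the conclusion with $(a,b,c,d)=(a_i,a_j,a_k,a_\ell)$, four distinct points.

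The only genuine subtlety — and the step I would be most careful about — is making sure that in each case I extract \emph{four distinct} points $a,b,c,d$ as the corollary demands, and that the conclusion is really about two paths among the chosen pairs. In the two-$1$-simplex case this is automatic. In the vertex-versus-edge case I need a fourth vertex disjoint from all three indices involved, which exists because there are five vertices in total; I then form the pair $\{a,b\}$ from the isolated vertex and this fourth vertex, noting $\gamma_{a,b}$ passes through the isolated vertex, which already lies on $\gamma_{c,d}$. Everything else is routine: the identification of $\sigma_1^4$ with $K_5$, the gluing lemma for continuity of $f$, and the observation that $f$ restricted to a $1$-simplex is (a reparametrization of) the prescribed path.
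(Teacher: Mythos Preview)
Your proposal is correct and follows exactly the approach the paper indicates: the paper simply says ``Applying \Cref{thm:flores-van-kampen} to $s=2$ results in the desired statement'' and gives no further argument, whereas you spell out the identification of $\sigma_1^4$ with $K_5$, the construction of the continuous map via the gluing lemma, and the small case analysis on simplex dimensions needed to extract four distinct points. Your treatment of the vertex-versus-edge case (picking a fourth vertex $a_\ell$ so that $a_i$ is an endpoint of $\gamma_{a_i,a_\ell}$) is a detail the paper omits entirely, and it is handled correctly.
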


We believe that the following proof is folklore, but we could not find a reference for it.

\begin{proof}[\textbf{Proof of \Cref{prop:horizont-and-vertical-crossing-intersect}}]
	Assume, for the sake of contradiction, that $ \im{\alpha} \cap \im{\beta} = \varnothing $. Set $ a_0 = \alpha(0) $, $ a_1 = \alpha(1) $, $b_0 = \beta(0)$, and $b_1 = \beta(1)$. 
	Fix a real number $ \epsilon > $. Let $\gamma_1 $, $ \gamma_2 $, $\gamma_3$, and $\gamma_4$ be paths that respectively join $ b_1 $ to $ a_1 $, $ a_1 $ to $b_0$, $ b_0 $ to $ a_0 $, and $ a_0 $ to $b_1 $ such that every two paths among them are internally disjoint, the image of each of them is entirely outside $ R $ except for its endpoints, and all of them are entirely inside the rectangle $$ R' = [\lset{R} - \epsilon, \rset{R}+\epsilon] \times [\bset{R}-\epsilon, \tset{R} + \epsilon]. $$  See \Cref{fig:K5-planar}.
	
	\begin{figure} 
		\centering 
		\vspace*{-1cm} 
		\includegraphics[width=9.5cm]{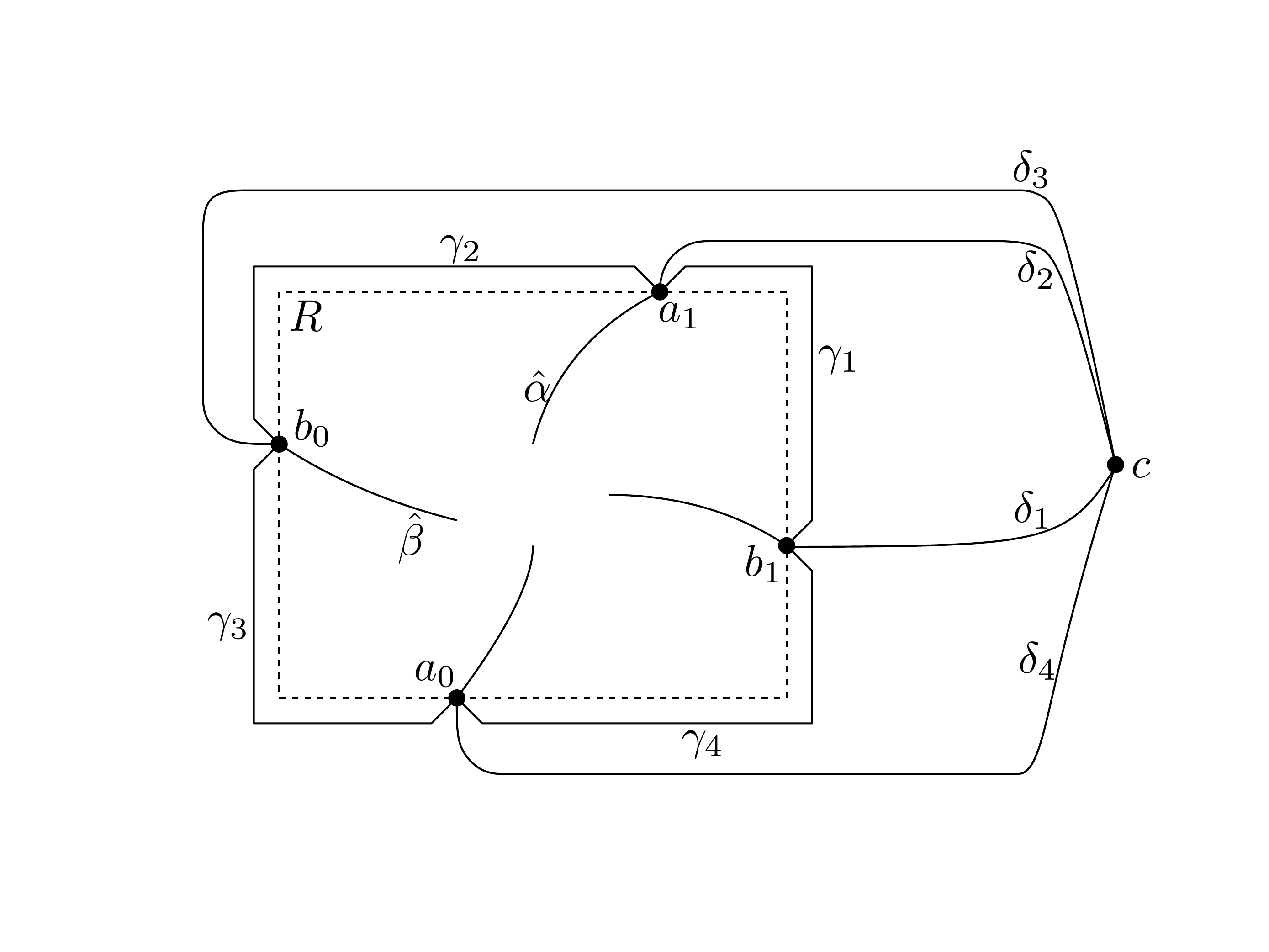} 
		\vspace{-1.2cm} 
		\caption{Proof of Lemma~\ref{lem:crossing-between-two-lines}: a planar embedding of $K_5 $.} \label{fig:K5-planar} 
	\end{figure}
	
	Finally, choose a point $ c $ outside $ R' $, and let $ \delta_1 $, $ \delta_2$, $ \delta_3$, and $ \delta_4 $ be four paths from $ c $ to $ b_1 $, $ a_1 $, $b_0 $, and $ a_0 $ respectively. Choose $\delta_i$'s such that they are two-by-two internally disjoint, and such that for each $ i, j \in \{1,2,3,4\}  $, the two paths $ {\gamma_i} $ and $ {\delta_j} $ are also internally disjoint. 
	
	The existence of the set $ S = \{a_0,a_1, b_0, b_1 , c\}$ of points and the paths $ \{ \hat{\alpha}, \hat{\beta}, \gamma_i, \delta_i : i \in \{1,2,3,4\} \} $ contradicts \Cref{cor:cor-of-flores-van-kampen}. 
\end{proof}

\begin{proof}[\textbf{Proof of \Cref{prop:ter(T)=T(ter)}}]
	Let $ T: (x,y) \mapsto (ax+c, bx+d)$. Denote the inverse of $ T$ by $ T^{-1}$. 
	
	If $ (x,y) \in \boxset{S}\sm S $, then  $$T(x,y) \in \boxset{S}\sm S = T(\boxset{S}) \sm T(S) = \boxset{T(S)} \sm T(S). $$
	
	Moreover, $ x'> x $ implies $ ax'+b> ax+b$. Therefore, $(x,y) \in \Ter{S}$ implies $ T(x,y) \in \Ter{T(S)} $. Hence, $ \Ter{S} \subseteq \Ter{T(S)}$.
	
	To finish the proof, notice that $S = T^{-1}(T(S)) $ and $T^{-1} $ is also a positive transformation. Thus, by what precedes, $\Ter{T(S)} \subseteq \Ter{S}$. 
\end{proof}

\begin{proof}[\textbf{Proof of \Cref{prop:T(E)-subter-of-T(S)}}] 
	Set $ S' = T(S)$ and $E' = T(E)$. We prove that the three items of the definition hold and $ E' $ is a subterritory of $ S' $. 
	
	
	First, by \Cref{prop:ter(T)=T(ter)}, we have that $ E'=T(E) \subseteq T(\Ter{S}) = \Ter{S'} $. So the first item of the definition holds.
	
	Second, since $ a$ and $ b $ are positive, for every compact set $ A $ we have  
	\begin{equation*} 
		\begin{split} \lset T(A) &= \min\{x: (x,y)\in T(A) \} \\ &= \min \{x: \big(\frac{x-c}{a}, \frac{y-d}{b}\big) \in A \} \\ & = \min \{ au+c : (u,v) \in E \} = a . \lset{A} + c.   
		\end{split} 
	\end{equation*} 
	In the equations above we have again used the change of variables $ u = \frac{x-c}{a} $ and $ v = \frac{y-d}{b} $.  So,  $$ \lset{E'} = a. \lset{E} + c < a. \lset{S} + c = \lset{S'}. $$ 
	
	The proof of the rest of the inequalities is similar.  This proves the second item. 
	
	Finally, let $ P$ be the prob for $ \boxset{S} $ defined by $ E $ and let $ \gamma: [0,1] \rightarrow S \cap P $ be the path connecting the top-side of $ P$ to the bottom-side of $ P$. Denote by $P'$ the prob for $ \boxset{S'} $ defined by $ E' $. Notice that $ P'=T(P)$. So, $T(S \cap P)= T(S) \cap T(P) = S' \cap P' $. Thus, the function $ T\circ \gamma: [0,1] \rightarrow S' \cap P' $ is a path entirely inside $ S' \cap P' $. Moreover, since $ T$ sends the top-side (resp.\ bottom-side) of $ P$ to the top-side (resp.\ bottom-side) of $ P' $, we have that $ (T \circ \gamma) (0) $ is on the top-side of $ P' $ and $ (T\circ \gamma)(1) $ is on the bottom-side of $ P'$, and this finishes the proof of the third item. 
\end{proof}

\begin{proof}[\textbf{Proof of \Cref{prop:T(F)-satisfies-C1-C5}}]
	Set $ F' = \{T(S): S \in \mathcal F\} $. Suppose that $T: (x,y) \mapsto (ax+c, by+d) $ where $ a > 0 $ and $ b> 0$. 
	
	First of all, notice that $ A \cap B \neq \varnothing $ if and only if $ T(A) \cap T(B) \neq \varnothing $. So, two sets $T(A)$ and $T(B)$ in $ \mathcal F' $ intersect if and only if $ A $ and $ B $ intersect in $ F $. 
	
	Second, notice that for every set $ A $, $ \lset{T(A)} = a .\lset{A} +c $. So, since $ a> 0$, if $ \lset{A} \leq \lset B$, then $\lset{T(A)} \leq \lset{T(B)}$.  
	
	Third, if $ A \subseteq B $, then $ T(A) \subseteq T(B) $, because if $ p \in  T(A) $, then $ p = (ax+c,by+d) $ for some $ (x,y) \in A $. Now, since $ (x,y) \in B $, we have $ p \in T(B) $.  
	
	Fourth, notice that $ \Ter{T(A)} = T(\Ter{A}) $. This, along with the third fact implies that if $ A \subseteq \Ter{B} $, then $T(A) \subseteq \Ter{T(B)}$. 
	
	With the four facts above, it is easy to check that $ \mathcal  F' $ satisfies Constraints (C1)-(C6). 
\end{proof}

\begin{proof}[\textbf{Proof of \Cref{prop:prop-after-def-Gamma}}]
	The proof of (1) is immediate from the definition of  $ T_P $.
	
	To prove (2), set $ T_P: (x,y) \mapsto (ax+c, bx+d)$. We have  $$ a = \frac{2\wset{S}}{\lset{E} - \lset{S}}.\frac{\wset{P^\uparrow}}{\wset{S}}, $$ and $$  c =  \frac{2 \wset S}{\lset E - \lset S} \big(\lset{P^\uparrow} - \frac{\lset{S}\wset{P^\uparrow}}{\wset{S}}\big) + \lset{P^\uparrow}(1 - \frac{2 \wset{S}}{\lset E - \lset S}). $$ 
	
	Now, notice that  \begin{equation*} \begin{split} \lset{T_P(E)} &= a.\lset{E} + c \\ &= \lset{E}.\frac{2 \wset{S} \wset{P^\uparrow}}{\wset{S}(\lset{E}-\lset{S})} + \frac{2\wset{S}\lset{P^\uparrow}}{\lset{E}-\lset{S}} \\ &  \ \ \ - \lset{B_S}.\frac{2 \wset{S} \wset{P^\uparrow}}{\wset{S}(\lset{E}-\lset{S})} + \lset{P^\uparrow} - \frac{2\wset{S}\lset{P^\uparrow}}{\lset{E}-\lset{S}} \\ &= \lset{P^\uparrow} + (\lset{E}-\lset{S})\frac{2 \wset{S} \wset{P^\uparrow}}{\wset{S}(\lset{E}-\lset{S})} \\ & > \lset{P^\uparrow} + 2 \wset{P^\uparrow}  =  \rset{P^\uparrow} + \wset{P^\uparrow}  > \rset{P^\uparrow}. \end{split} \end{equation*} 
	To complete the proof, notice that $ \rset{P^\uparrow} = \rset{\boxset{\mathcal F}} $.  
\end{proof}

\begin{proof}[\textbf{Proof of \Cref{prop:more-on-Gamma-and-disjoint-probs}}] 
	Item (1) follows from the facts that $ \boxset{S_P} \subseteq P $, $ S_P \subseteq P$, $ S_Q \subseteq Q$, and $ P \cap Q = \varnothing $.
	
	To prove (2), notice that by \Cref{prop:prop-after-def-Gamma}, we have $ \lset{E_P} > \rset{\boxset{\mathcal F}}$. Since $ P_1 $ is the prob defined by $ E_P$, the prob $ P_1 $ is also outside $ \boxset{\mathcal F}$. So, for every $ A \in \mathcal F$, we have $ A \notin N_{\mathcal F'}(P_1)$. Moreover, by item (1) of this property, for every $ Q \in \mathcal P \sm \{P\}$, we have $ S_Q \notin N_{\mathcal F'}(P_1) $. Finally, since $ E_P $ is a subterritory of $ S $, by definition of $ S_P \cap P_1 \neq \varnothing $. Therefore $ N_{\mathcal F'}(P_1) = \{S_P\}$.

	To prove (3), assume that $ A \in \mathcal F' $ is of the form $ A = S_Q $ for some $ Q $. Case 1, $ Q = P$, in which case $S_Q = S_P \subseteq P_1 $, and since $P_1 \cap P_2 = \varnothing $, we have $ A \notin N_{\mathcal F'}(P_2) $. Case 2, $ Q \neq P $, and thus item (1) of this property implies that $ A \notin N_{\mathcal F'}(P_2)$. Therefore, $ N_{\mathcal F'}(P_2) \subseteq \mathcal F$.
	
	Hence, $ N_{\mathcal F'}(P_2) = N_{\mathcal F}(P_2)$. So, since $ P_2 \cap \boxset{F} \subseteq P $, we have $ N_{\mathcal F'}(P_2) \subseteq N_{\mathcal F}(P)$.
	
	To prove (4), first notice that $ \rset{S_P}> \rset{\mathcal F}$, along with (1), imply that there exists no $ B \in \mathcal F'$ such that $ B \adj S_P $. Now, set $ N(S_P) $ to be the set $ \{A\in \mathcal F': S_P \adj A \}$. by construction, $ N(S_P) \subseteq N(P)$. Moreover, since $S_P $ crosses $ P$ horizontally and all elements of $ N(P)$ cross $ P$ vertically, by \Cref{prop:horizont-and-vertical-crossing-intersect}, $ S_P $ intersects all elements of $ N(P)$. Moreover, if $ A\in N(P)$, we have 
	$$
	\{(x,y): x = \rset{S_P}\} = \{(x,y): x = \rset{P^{\uparrow}}\} \subseteq \Ter{A}. 
	$$
	Finally,
	$$
	\lset{A} \leq \lset{P^{\uparrow}} = \lset{S_P} < \rset{A} \leq \rset{\mathcal F} < \rset{S_P},
	$$
	and 
	$$
	\bset{A} < \bset{P^{\uparrow}} = \bset{S_P} < \tset{S_P} = \tset{P^{\uparrow}} < \tset{A}. 
	$$
	So, $ A \adj N(P)$ for all $ A \in N(P)$, and this completes the proof. 
\end{proof}


\end{document}